\newtheorem{theorem}{Theorem}[section]
\newtheorem{lemma}[theorem]{Lemma}
\newtheorem{defi}[theorem]{Definition}
\newtheorem{rem}[theorem]{Remark}
\newtheorem{prop}[theorem]{Proposition}
\newtheorem{cor}[theorem]{Corollary}
\newtheorem{ex}[theorem]{Example}
\DeclareMathOperator{\im}{im}
\DeclareMathOperator{\ind}{ind}
\DeclareMathOperator{\coker}{coker}
\DeclareMathOperator{\pr}{pr}
\DeclareMathOperator{\IM}{Im}
\DeclareMathOperator{\Ker}{Ker}
\DeclareMathOperator{\Coker}{Coker}
\def\id{\mathrm{id}}
\def\cl{\mathrm{cl\,}}
\def\Int{\mathrm{int\,}}
\def\bd{\mathrm{bd\,}}
\def\dist{\mathrm{dist}}
\newcommand{\N}{{\mathbb N}}
\newcommand{\R}{{\mathbb R}}
\newcommand{\C}{{\mathbb C}}
\newcommand{\Z}{{\mathbb Z}}
\newcommand{\abs}[1]{\left|#1\right|}				
\newcommand{\norm} [1]{\left\|#1\right\|}			
\title{Fredholm theory of families of discrete dynamical systems and its applications to bifurcation theory}
\author{Robert Skiba and Nils Waterstraat}
\begin{document}
\date{}
\maketitle

\footnotetext[1]{{\bf 2010 Mathematics Subject Classification: Primary 58E07, 37C29; Secondary 19L20, 47A53}}

\begin{abstract}
In a previous work, we proved an index theorem for families of asymptotically hyperbolic discrete dynamical systems and obtained applications to bifurcation theory. A weaker and far more common assumption than asymptotic hyperbolicity is the existence of an exponential dichotomy. In this paper we generalize all our previous results to the latter setting, which requires substantial modifications of our arguments. In addition, we generalize previous results on continuity and differentiability of Nemitski operators for discrete dynamical systems to obtain even better bifurcation results.
\end{abstract}

\section{Introduction}
Let $\Lambda$ be a compact metric space and $f_n:\Lambda\times\mathbb{R}^d\rightarrow\mathbb{R}^d$, $n\in\mathbb{Z}$, a sequence of continuous maps such that $f_n(\lambda,0)=0$ for all $n\in\mathbb{Z}$ and $\lambda\in\Lambda$. We consider the discrete dynamical systems

\begin{align}\label{intro-nonlinear-equation}
\phi(n+1)&=f_n(\lambda,\phi(n)),\;n\in \Z,
\end{align}
and study bifurcation from the trivial branch $\Lambda\times\{0\}$ of homoclinic solutions, i.e., solutions of \eqref{intro-nonlinear-equation} that converge to $0$ for $n\rightarrow\pm\infty$. If each $f_n$ is differentiable in the second variable with a continuous derivative, then the linearizations of \eqref{intro-nonlinear-equation} at $0$ are the family

\begin{align}\label{intro-linear-equation}
\phi(n+1)&=\mathbb{A}_n(\lambda)\phi(n),\;n\in \Z,
\end{align}
of linear discrete dynamical systems, where $\mathbb{A}_n(\lambda)=D_2 f_n(\lambda,0)$.\\
In our previous work \cite{SkibaIch} we imposed assumptions from \cite{JacoboRobertII} on $f$ that allow to study the bifurcation problem of \eqref{intro-nonlinear-equation} by topological methods in the Banach space $\ell_0(\mathbb{R}^d)$ of all sequences converging to $0$ for $n\rightarrow\pm\infty$. A fundamental assumption in \cite{SkibaIch} is that the matrices $\mathbb{A}_n(\lambda)$ in \eqref{intro-linear-equation} are asymptotically hyperbolic, i.e., they converge for $n\rightarrow\pm\infty$ uniformly to families of hyperbolic matrices.\\
The most powerful concept for studying homoclinic trajectories of dynamical systems are so-called exponential dichotomies (ED for short), which generalize the concept of hyperbolicity from autonomous to nonautonomous linear equations (c.f., e.g.,  \cite{Pa84,Pal88,Christian10,Poetzsche, Poetzscheb}). As it can be shown that every asymptotically hyperbolic system \eqref{intro-linear-equation} has an ED, it is a natural question whether our assumptions in \cite{SkibaIch} can be weakened to this far more common setting. The aim of this article is to give an affirmative answer.\\
Naturally, discrete dynamical systems are studied in a suitable sequence space and, as we will recall below, an ED comes by definition with a bounded projection in that Banach space. As continuous families of projections induce vector bundles, this suggests that parametrized difference equations should be studied by $KO$-theory, which is a cohomology theory whose groups are built by equivalence classes of vector bundles. Consequently, the problem of existence of a continuous family of projections satisfying the conditions of an exponential dichotomy becomes important for our approach. A part of this work is concerned with this existence question and its stability under perturbations. Let us note that this problem has been studied before (see \cite{Bar, Poetzsched} and the references therein), however the obtained results were not suitable for applicable $KO$-theoretic methods. The crucial novelty in our approach is that, in contrast to \cite{Aulbach0,Aulbach1}, we only need the existence of projections on semi-axis and not on all of $\mathbb{R}$. We adapt the spectral theory from \cite{Aulbach0,Aulbach1} in order to obtain a direct construction of continuous families of projections on both semi-axes for perturbed parametrized difference equations.\\
The central concept that we need to link the families of projections with the difference equations \eqref{intro-nonlinear-equation} and \eqref{intro-linear-equation} is the index bundle of Atiyah and J\"anich, which is a generalization of the index of Fredholm operators to families. The index bundle is a $KO$-theory class of the underlying parameter space, which formally shares all properties of the integer-valued index of a single operator. The main theorem of this article is a formula that computes the index bundle for discrete dynamical systems \eqref{intro-linear-equation} having an ED in terms of the associated projections. In the special case that \eqref{intro-linear-equation} is asymptotically hyperbolic, we reobtain the family index theorem that we proved in \cite{SkibaIch}, where we need the perturbation theory for EDs that we previously established.\\
In the final part of this work, we apply the obtained index theorem to bifurcation theory along the lines of our previous work \cite{SkibaIch}, which substantially widens its applicability. Indeed, the only example that we could give in \cite{SkibaIch} was a two dimensional system parametrized by a torus that was adapted from an example of \cite{JacoboAMS}. Here we use our new approach to construct a whole class of examples for general parameter spaces by perturbing simple systems. Finally, as we are dealing with nonlinear systems in the bifurcation setting, we in particular need to consider differentiability of Nemitski operators for (generally) non-invertible difference equations, where we also generalize previous results from \cite{SkibaIch} to make our theory more applicable.\\
The paper is organized as follows. We first fix some notations and recall some preliminaries in the following second section. Section 3 is devoted to continuity and differentiability of the Nemitski operator in our setting. In Section 4 we recall the classical concept of exponential dichotomy and compare it to asymptotically hyperbolic systems which we considered in our previous work \cite{SkibaIch}. Moreover, we shall provide a direct construction of the continuous family of projections for given parametrized discrete dynamical systems. The mentioned family of projections plays a crucial role in the applications of $KO$-theory to discrete dynamical systems. In Section 5 we study Fredholm properties of the linearized equations and prove the index theorem for \eqref{intro-linear-equation}, which is the main result of this work. In the final Section 6, we use the results of the Sections 3--5 to obtain generalizations of the bifurcation results from \cite{SkibaIch} including a whole class of new and non-trivial examples.

\section{Notation and Preliminaries}
The aim of this section is to recall some basic concepts and notations
that we use throughout the paper. We denote by $\mathcal{L}(X)$ the space of bounded linear operators on a Banach space $X$ with the operator norm, and by $GL(X)$ the open subset of all invertible elements. The symbol $I_X$ stands for the identity operator on $X$. If $X=\mathbb{K}^d$ for $\mathbb{K}=\mathbb{C}$ or $\mathbb{K}=\mathbb{R}$ and some $d\in\mathbb{N}$, then we use instead the common notation $M(d,\mathbb{K})$, $GL(d,\mathbb{K})$ and $I_d$, respectively. The norm in the Euclidean space $\mathbb{K}^d$ will also be denoted by the symbol $|\cdot|$. As usual, $\Z$ denotes the ring of integers, $\N$ are the positive integers including zero. Given $\kappa\in \Z$, we define the discrete intervals $\Z^+_{\kappa}:=\Z\cap [\kappa,+\infty)$, $\Z^-_{\kappa}:=\Z\cap (-\infty,\kappa]$. \newline\indent
A matrix  $\mathcal{A}\in M(d,\mathbb{C})$ is called {\it hyperbolic} if $\mathcal{A}$ has no
eigenvalues of modulus one, i.e., $\sigma(A)\cap
\{|z|=1\}=\emptyset.$ Consequently, the spectrum $\sigma(\mathcal{A})$ of a hyperbolic
matrix $\mathcal{A}$ consists of the two disjoint closed subsets
$\sigma(\mathcal{A})\cap\{|z| < 1\}$ and $\sigma(\mathcal{A})\cap\{|z|> 1\}$. Note that we do not assume that a hyperbolic matrix is invertible. We denote the set of all hyperbolic matrices by $H(d,\mathbb{C})$.\\
In what follows, the symbol $\mathbb{I}$ stands for one of the sets $\Z^{\pm}_{\kappa}$ for some $\kappa\in \Z$.  A map $\mathbb{P}\colon \mathbb{I}\to \mathcal{L}(X)$ is called a projector provided for any $n\in \mathbb{I}$, $\mathbb{P}(n)\colon X\to X$ satisfies the condition $\mathbb{P}(n)\circ \mathbb{P}(n)=\mathbb{P}(n)$. The symbol $\mathbb{P}(n)$ will sometimes be denoted, for simplicity, by $\mathbb{P}_n$. Let $\Lambda$ be a metric space. By a parametrized projector we shall mean a map $\mathbb{P}\colon \Lambda\times \mathbb{I}\to \mathcal{L}(X)$ such that $\mathbb{P}(\lambda)\colon \mathbb{I}\to \mathcal{L}(X)$ is a projector for all $\lambda\in\Lambda$, and $\mathbb{P}_n\colon \Lambda\to \mathcal{L}(X)$ is continuous for all $n\in \mathbb{I}$.\\
We write $\ell^\infty(\mathbb{K}^d)$ for the Banach space of bounded sequences $\phi=(\phi(n))_{n\in\Z}$ in $\mathbb{K}^d$ (or equivalently, bounded functions $\phi\colon \Z\to \mathbb{K}^d$) with norm $\norm{\phi}_{\infty}:=\sup_{n\in\Z}\abs{\phi(n)}$, where $\mathbb{K}=\mathbb{C}$ or $\mathbb{K}=\R$. Sometimes we will write $\phi_n$ instead of $\phi(n)$. The set $\ell_0(\mathbb{K}^d)$ of all sequences with two-sided limit $0$ is a closed subspace of $\ell^\infty(\mathbb{K}^d)$. In what follows, we shall often make use of the following closed subspaces of $\ell_0(\mathbb{K}^d)$:
\begin{align*}
\ell_{0\kappa}^{\pm}(\mathbb{K}^d)&:=\{\mathbbm{1}_{\Z^{\pm}_{\kappa}}\phi \mid \phi\in \ell_0(\mathbb{K}^d)\},\quad
\ell_{0}^{\pm}(\mathbb{K}^d):=\{\mathbbm{1}_{\Z^{\pm}_{0}}\phi\mid \phi\in \ell_0(\mathbb{K}^d)\},\\
\ell^{\infty}_{\pm}(\mathbb{K}^d)&:=\{\mathbbm{1}_{\Z^{\pm}_{0}}\phi\mid\phi\in \ell^{\infty}(\mathbb{K}^d)\},\quad
\ell_{\underline{\kappa}\overline{\kappa}}(\mathbb{K}^d):=\ell^+_{0\underline{\kappa}}(\mathbb{K}^d)\cap
\ell_{0\overline{\kappa}}^{-}(\mathbb{K}^d),
\end{align*}
where $\underline{\kappa}\leqslant 0\leqslant\overline{\kappa}$ and $\mathbbm{1}_{\Z^{\pm}_{m}}\phi$ is defined by
$(\mathbbm{1}_{\Z^{\pm}_{m}}\phi)(n):=\mathbbm{1}_{\Z^{\pm}_{m}}(n)\cdot\phi(n)$,
for all $n\in\Z$, $m\in\Z$.\newline\indent
For any linear bounded map $T\colon \mathbb{E}\to \mathbb{E}$, where $\mathbb{E}$ is a normed space and $v\in \mathbb{E}$, we define
$
|||T^{-k}v|||:=\inf\{\|w\|\mid T^k(w)=v\}$,
where $k>0$. If $T^{-k}(v)=\emptyset$, then we put $|||T^{-k}v|||:=\infty$ (and hence $\sqrt[k]{|||T^{-k}v|||}=\infty$).
We shall justify that if $v\neq 0$ and $T^{-k}(v)\neq\emptyset$, then $|||T^{-k}v|||>0$. To see this, let us first observe that $|||T^{-k}v|||\geqslant 0$ for all $v\in \mathbb{E}$ with $v\neq 0$. Now assume on the contrary that $|||T^{-k}v|||=0$ for some $v\neq 0$. Then there exists a sequence $(w_n)\in \mathbb{E}$ such that $T^k(w_n)=v$ and $\|w_n\|\rightarrow 0$ as $n\to\infty$. Consequently,
$0<\|v\|=\|T^k(w_n)\|\leqslant \|T^k\|\|w_n\|\rightarrow 0$ as $n\to\infty$
and hence we get a contradiction, which proves that $|||T^{-k}v|||> 0$.\newline\indent
Given a metric space $(\mathbf{X},\mathrm{d}_{\mathbf{X}})$,
$\mathbf{A}\subset \mathbf{X}$, we will denote the closure, interior and the
boundary of $\mathbf{A}$ in $\mathbf{X}$ by $\cl \mathbf{A}$, $\Int \mathbf{A}$ and $\bd \mathbf{A}$,
respectively. Furthermore, by
$D_{\mathbf{X}}(x,r)$ and $B_{\mathbf{X}}(x,r)$ we denote the closed and open ball
around $x$ of radius $r$ in $\mathbf{X}$, respectively, and, for $\varepsilon>0$,
$O_{\varepsilon}(\mathbf{A}):=\{x\in \mathbf{X}\mid \dist(x,\mathbf{A}):=\inf_{a\in \mathbf{A}}\mathrm{d}_{\mathbf{X}}(x,a)<\varepsilon\}$ is the $\varepsilon$-neighborhood of $\mathbf{A}$.
For $\mathbf{X}=\R^d$, an open (resp. closed) ball of radius $r>0$ centred in
$x\in \R^d$ is denoted by $B_d(x,r)$ (resp. $D_d(x,r)$).\newline\indent
Now we introduce some notions from K-theory. Let $\Lambda$ stand for a path-connected and compact metric space.
Recall that the trivial vector bundle over $\Lambda$ with fiber $V$ is the product $\Lambda\times V$ with the projection onto the first component. It will be denoted by the symbol $\Theta(V)$. Define two vector bundles $\mathbb{E}_1$ and $\mathbb{E}_2$ over $\Lambda$ to be stably isomorphic, written $\mathbb{E}_1\approx_s \mathbb{E}_2$,  if $\mathbb{E}_1\oplus \Theta(\R^n)$ is isomorphic to $\mathbb{E}_2\oplus \Theta(\R^n)$  for some non-negative integer $n$. It is an easy exercise to check
that $\approx_s$ is an equivalence relation. The set of stable equivalence classes of vector bundles of finite rank over $\Lambda$ forms a commutative semigroup with respect to direct sum $\oplus$. Now by applying the Grothendieck construction to this semigroup we obtain an abelian group $KO(\Lambda)$
consisting of formal differences $[\mathbb{E}]-[\mathbb{F}]$ of isomorphism classes $[\mathbb{E}]$, $[\mathbb{F}]$ of vector bundles $\mathbb{E}$ and $\mathbb{F}$ over $\Lambda$, with the equivalence relation
$[\mathbb{E}]-[\mathbb{F}]=[\mathbb{E}']-[\mathbb{F}']$ if and only if $\mathbb{E}\oplus \mathbb{F}'\approx_s \mathbb{E}'\oplus \mathbb{F}$. Furthermore, the addition rule on $KO(\Lambda)$ is defined by
$([\mathbb{E}]-[\mathbb{F}])+([\mathbb{E}']-[\mathbb{F}'])=[\mathbb{E}\oplus \mathbb{E}']-[\mathbb{F}\oplus \mathbb{F}'].$
Note that the zero element in $KO(\Lambda)$ is of the form $[\mathbb{E}]-[\mathbb{E}]$, for any class $[\mathbb{E}]$, and the inverse element of $[\mathbb{E}]-[\mathbb{F}]$ is $[\mathbb{F}]-[\mathbb{E}]$.
The elements of $KO(\Lambda)$ are called virtual bundles. One can show that each virtual bundle can be represented as a difference $[\mathbb{E}]-[\Theta(\R^n)]$ for some $[\mathbb{E}]$ and $n\in\N$. Moreover, KO-theory can be regarded as a contravariant functor from the category of compact topological spaces to the category of abelian groups.
This follows from the fact that every continuous function $f\colon \Lambda\to \Lambda'$ induces a group homomorphism
$f^{\ast}\colon KO(\Lambda')\to KO(\Lambda)$ sending $[\mathbb{E}]-[\mathbb{E}']$ to $[f^{\ast}(\mathbb{E})]-[f^{\ast}(\mathbb{E}')]$, where $f^{\ast}(\mathbb{E})$ and $f^{\ast}(\mathbb{E}')$ are the pullback bundles. The reduced $KO$-group of $\Lambda$, denoted by $\widetilde{KO}(\Lambda)$, is defined to be the kernel of the homomorphism
$KO(\Lambda)\to KO(\ast)$ induced by the inclusion $\ast\hookrightarrow \Lambda$, where $\ast$ denotes any point of $\Lambda$. Notice that this definition does not depend on the choice of the point $\ast$ of $\Lambda$ since $\Lambda$ is connected. Furthermore, it is easy to see that $\widetilde{KO}(\Lambda)$
consists of all $[\mathbb{E}]-[\mathbb{F}]\in KO(\Lambda)$ such that $\mathbb{E}$ and $\mathbb{F}$ both have the same dimension, and we have a splitting $KO(\Lambda)=\widetilde{KO}(\Lambda)\oplus \Z$.\newline\indent
Two vectors bundles $\mathbb{E}$ and $\mathbb{F}$ over $\Lambda$ are called fiberwise homotopy equivalent if
there is a fibre preserving homotopy equivalence between their sphere bundles $S(\mathbb{E},1)$
and $S(\mathbb{F},1)$. In addition, $\mathbb{E}$ and $\mathbb{F}$ are stably fibrewise homotopy equivalent if $\mathbb{E}\oplus\Theta(\R^n)$ and
$\mathbb{F}\oplus\Theta(\R^m)$ are fibrewise homotopy equivalent for some non-negative integers $n,m$. Letting $T(\Lambda)$ be the subgroup of
$\widetilde{KO}(\Lambda)$ generated by elements $[\mathbb{E}]-[\mathbb{F}]$ such that $\mathbb{E}$ and $\mathbb{F}$ are stably fiberwise
homotopy equivalent, we define the group $J(\Lambda)$ as the quotient of $\widetilde{KO}(\Lambda)$ by the subgroup $T(\Lambda)$. The generalized $J$-homomorphism  $J\colon \widetilde{KO}(\Lambda)\to J(\Lambda)$ is the projection to the quotient (cf. \cite{KTheoryAtiyah}). Recall that the group $J(\Lambda)$ was introduced by Atiyah in \cite{ThomAtiyah}, where it is also proved that $J(\Lambda)$ is a finite
group provided $\Lambda$ is a finite CW-complex. \newline\indent We conclude our recap of $K$-theory by the following simple example: $\widetilde{KO}(S^1)$ is a cyclic group  of order two and the generator for this group is the M\"{o}bius bundle. This is because the direct sum of two copies of the M\"{o}bius bundle is the trivial bundle $S^1\times\R^2$, and hence it is readily seen that $T(S^1)$ is the trivial subgroup of $\widetilde{KO}(S^1)$. Consequently, $J\colon \widetilde{KO}(S^1)\to J(S^1)$ is an isomorphism.\newline\indent
 Finally, let us explain more precisely what we mean by bifurcation from the trivial branch. Let $X,Y$ be normed spaces and let $\Lambda$ be a path-connected and compact metric space. Let $f\colon \Lambda\times \mathcal{O}\to Y$ be a continuous map, where $\mathcal{O}$ is a subset of $X$ containing $0\in X$. We will assume that $f(\lambda,0)=0$ for all $\lambda\in\Lambda$. Solution of the equation $f(\lambda,x)=0$ of the form $(\lambda,0)$ will be called trivial and the set $\Lambda\times\{0\}$ will be called the trivial branch. If the equation $f(\lambda,x)=0$ has a nontrivial solution $(\lambda,x)\in \Lambda\times \mathcal{O}$, $x\neq 0$, in every neighborhood of $(\lambda_{\ast},0)$, then $(\lambda_{\ast},0)$ is said to be a bifurcation point for $f(\lambda,x)=0$ (in what follows $(\lambda_{\ast},0)$ will be frequently identified with the point $\lambda_{\ast}$). The set of bifurcation points will be denoted by the symbol $\mathcal{B}_f$. 

\section{Nemitski operators for noninvertible difference equations}\label{section-Nemitski}
In this section we are going to present continuity and differentiability properties of functional operators associated with difference equations, which are usually called Nemitski operators. Let us first introduce the type of difference equations that will be in the center of our interest. Namely, we focus on the parametrized difference equations
\begin{align}
\phi(n+1)&=f_n(\lambda,\phi(n)),\;n\in \Z,\tag{F}\label{nonlinear-equation}
\end{align}
where $\lambda\in\Lambda$ is a parameter, the right-hand side $f_n\colon \Lambda\times \R^d\to \R^d$ is continuous for all $n\in\Z$ and $f_n(\lambda,0)=0$ for all $n\in\Z$ and $\lambda\in\Lambda$. A doubly infinite sequence of maps $f_n$ will be denoted by the symbol $f\colon \Lambda\times\Z\times\R^d\to \R^d$. \newline\indent
For a fixed parameter $\lambda\in\Lambda$, a solution of the difference equation \eqref{nonlinear-equation}, or a trajectory of $f$, is a sequence $\phi=(\phi(n))_{n\in\Z}$ with $\phi(n)\in \R^d$ satisfying the recursion \eqref{nonlinear-equation}. A trajectory $(\phi(n))_{n\in\Z}$ of \eqref{nonlinear-equation} will be called {\it homoclinic} to $0$, or simply a homoclinic trajectory, if $\phi(n)\to 0$ as $|n|\to \infty$, while a constant trajectory of \eqref{nonlinear-equation} will be called {\it stationary}. Since $f_n(\lambda,0)=0$ for all $n\in\Z$, the constant sequence $\mathbf{0}=(0)_{n\in\Z}$ is a stationary trajectory of $f$ and homoclinic to $0$. In what follows, we will be interested in nontrivial trajectories homoclinic to $0$.\\
In order to study the parametrized difference equations \eqref{nonlinear-equation} in the context of Fredholm and bifurcation theory, we will additionally suppose the following assumptions:
\begin{enumerate}
\item[$(F0)$] $f_n$ is differentiable in the second variable with the derivative $D_2 f_n$ depending continuously on $\Lambda\times\R^d$,
\item[$(F1)$] there exists $r_0>0$ such that the family $\{f_n\colon \Lambda\times D_d(0,r_0)\to \R^d\}_{n\in \Z}$ is equicontinuous and the family $\{D_2f_n\colon \Lambda\times D_d(0,r_0)\to \mathcal{L}(\R^d)\}_{n\in \Z}$ is equicontinuous and pointwise bounded.
\end{enumerate}
Before proceeding, let us make some comments about these assumptions.
\begin{itemize}
\item $(F1)$ is weaker than the corresponding assumption in \cite{SkibaIch}. Indeed, in the paper \cite{SkibaIch}
it was assumed that the families $\{f_n\}_{n\in\Z}$ and $\{D_2f_n\}_{n\in \Z}$ are equicontinuous on all subsets of the form $\Lambda\times K$, where $K$ is a compact subset of $\R^d$. Here we merely assume that the families $\{f_n\}_{n\in\Z}$ and $\{D_2f_n\}_{n\in \Z}$ are equicontinuous on a set of the form $\Lambda\times D_d(0,r_0)$ for an arbitrarily small closed ball $D_d(0,r_0)$ in $\mathbb{R}^d$.
\item Since $\Lambda\times D_d(0,r_0)$ is compact, $(F1)$ implies that $\{f_n\}_{n\in\Z}$ and $\{D_2f_n\}_{n\in\Z}$ are uniformly equicontinuous on $\Lambda\times D_d(0,r_0)$, and $\{D_2f_n\}_{n\in\Z}$ is uniformly bounded on $\Lambda\times D_d(0,r_0)$.
\item If $\Lambda$ is a smooth compact manifold and $f_n\colon \Lambda\times\R^d\to \R^d$ is differentiable on $\Lambda\times\R^d$ for all $n\in\Z$, then Assumption $(F1)$ can be formulated as follows: there exists $r_0>0$ such that the family $\{Df_n\colon \Lambda\times D_d(0,r_0)\to \mathcal{L}(\R^d)\}_{n\in \Z}$ is equicontinuous and pointwise bounded, where the symbol $Df_n$ denotes the derivative of $f_n$.
\end{itemize}
In what follows, we also consider linear difference equations
\begin{align}\label{linear-equation}\tag{$L$}
\phi(n+1)&=\mathbb{A}_n(\lambda)\phi(n),\;n\in \Z,
\end{align}
where the right-hand side $\mathbb{A}\colon \Lambda\times\Z\to \mathcal{L}(\R^d)$ satisfies the assumption
\begin{enumerate}
\item[$(L0)$] the family $\mathbb{A}=\{\mathbb{A}_n\colon \Lambda\to \mathcal{L}(\R^d)\}_{n\in\Z}$ is equicontinuous and pointwise bounded.
\end{enumerate}
Note that the compactness of the parameter space $\Lambda$ implies that the family $\mathbb{A}$ from $(L0)$ is uniformly
equicontinuous and uniformly bounded.\newline\noindent
The maps $\mathbb{A}\colon \Lambda\times\Z\to \mathcal{L}(\R^d)$ and $f\colon \Lambda\times\Z\times\R^d\to \R^d$ will be called {\it a parametrized linear discrete vector field} and {\it a parametrized nonlinear discrete vector field}, respectively.\\
We will see below that the assumptions (F0), (F1) and (L0) in particular imply that the difference equations \eqref{linear-equation} and \eqref{nonlinear-equation} induce  functional operators
\begin{align}
&\mathcal{N}_{\mathbb{A}}\colon \Lambda\to \mathcal{L}(\ell^{\infty}(\R^d)), & \mathcal{N}_{\mathbb{A}}(\lambda)\phi(n)&:=\mathbb{A}_n(\lambda)\phi(n),\\
&\mathcal{N}_{f}\colon \Lambda\times \ell_0(\R^d)\to \ell_0(\R^d), &
\mathcal{N}_{f}(\lambda,\phi)(n)&:=f_n(\lambda,\phi(n)),
\end{align}
that are called {\it Nemitski operators}. In what follows, we will also consider the restriction

\[\mathcal{N}_{f(\lambda)}\colon \ell_0(\R^d)\to \ell_0(\R^d),\quad\mathcal{N}_{f(\lambda)}\phi=\mathcal{N}_{f}(\lambda,\phi).\]
Note that the homoclinic solutions of \eqref{nonlinear-equation} are strictly related to the nonlinear functional operator
$\mathbb{S}_l-\mathcal{N}_{f}\colon \Lambda\times\ell_0(\R^d)\to \ell_0(\R^d)$,
where $\mathbb{S}_l\colon \Lambda\times\ell_0(\R^d)\to \ell_0(\R^d)$ is the left shift operator given by $(\mathbb{S}_l(\lambda,\phi))(n)=\phi(n+1)$ for all
$\lambda\in\Lambda$ and $\phi\in \ell_0(\R^d)$. Indeed, given a parameter $\lambda$, $\phi\in \ell_0(\R^d)$ is a solution to \eqref{nonlinear-equation} if and only if
\begin{align*}
(\mathbb{S}_l-N_{f})(\phi)=\mathbb{S}_l\phi-\mathbb{N}_f(\lambda,\phi)=0,
\end{align*}
what explains why it is worth to study the Nemitski operators. Consequently, in the remaining part of this section we consider continuity and differentiability properties of $\mathcal{N}_{\mathbb{A}}$ and $\mathcal{N}_{f}$. We begin with the following simple result that we leave to the reader.

\begin{lemma}\label{N-L-continuous}
If $\mathbb{A}\colon \Lambda\times\Z\to \mathcal{L}(\R^d)$ satisfies $(L0)$, then the Nemitski operator\linebreak $\mathcal{N}_{\mathbb{A}}\colon \Lambda\to \mathcal{L}(\ell^{\infty}(\R^d))$ is well-defined, continuous on $\Lambda$ and $\mathcal{N}_{\mathbb{A}}(\lambda)(\ell_0(\R^d))\subset \ell_0(\R^d)$ for all $\lambda\in\Lambda$.
\end{lemma}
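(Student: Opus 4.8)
The plan is to verify the three assertions in turn, using only the uniform equicontinuity and uniform boundedness of the family $\mathbb{A}$ that follow from $(L0)$ by compactness of $\Lambda$ (as noted in the text). Set $M:=\sup_{n\in\Z}\sup_{\lambda\in\Lambda}\norm{\mathbb{A}_n(\lambda)}<\infty$. First I would show that for each fixed $\lambda$ the operator $\mathcal{N}_{\mathbb{A}}(\lambda)$ is a well-defined bounded operator on $\ell^\infty(\R^d)$: for $\phi\in\ell^\infty(\R^d)$ one has $\abs{(\mathcal{N}_{\mathbb{A}}(\lambda)\phi)(n)}=\abs{\mathbb{A}_n(\lambda)\phi(n)}\leqslant M\abs{\phi(n)}\leqslant M\norm{\phi}_\infty$, so $\mathcal{N}_{\mathbb{A}}(\lambda)\phi\in\ell^\infty(\R^d)$ with $\norm{\mathcal{N}_{\mathbb{A}}(\lambda)}\leqslant M$; linearity in $\phi$ is immediate. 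This also gives a uniform bound on the operator norms, which will be convenient later.

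Next I would handle the invariance of $\ell_0(\R^d)$. Fix $\lambda$ and $\phi\in\ell_0(\R^d)$; I want $(\mathcal{N}_{\mathbb{A}}(\lambda)\phi)(n)=\mathbb{A}_n(\lambda)\phi(n)\to 0$ as $\abs{n}\to\infty$. This is immediate from the pointwise (indeed uniform) bound $\abs{\mathbb{A}_n(\lambda)\phi(n)}\leqslant M\abs{\phi(n)}$ and $\abs{\phi(n)}\to 0$. Hence $\mathcal{N}_{\mathbb{A}}(\lambda)(\ell_0(\R^d))\subset\ell_0(\R^d)$.

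Finally, for continuity of $\lambda\mapsto\mathcal{N}_{\mathbb{A}}(\lambda)$ in the operator norm, I would use uniform equicontinuity of $\{\mathbb{A}_n\}_{n\in\Z}$: given $\varepsilon>0$ there is $\delta>0$ such that $\mathrm{d}_\Lambda(\lambda,\mu)<\delta$ implies $\norm{\mathbb{A}_n(\lambda)-\mathbb{A}_n(\mu)}<\varepsilon$ for \emph{all} $n\in\Z$ simultaneously. Then for such $\lambda,\mu$ and any $\phi$ with $\norm{\phi}_\infty\leqslant 1$,
\[
\norm{(\mathcal{N}_{\mathbb{A}}(\lambda)-\mathcal{N}_{\mathbb{A}}(\mu))\phi}_\infty=\sup_{n\in\Z}\abs{(\mathbb{A}_n(\lambda)-\mathbb{A}_n(\mu))\phi(n)}\leqslant\sup_{n\in\Z}\norm{\mathbb{A}_n(\lambda)-\mathbb{A}_n(\mu)}\leqslant\varepsilon,
\]
so $\norm{\mathcal{N}_{\mathbb{A}}(\lambda)-\mathcal{N}_{\mathbb{A}}(\mu)}\leqslant\varepsilon$, which is the desired continuity.

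The only genuine subtlety — and the step I would flag as the main point — is that norm-continuity on the infinite-dimensional space $\ell^\infty(\R^d)$ requires the \emph{uniform} (in $n$) equicontinuity of the family, not merely continuity of each $\mathbb{A}_n$; this is exactly where the compactness of $\Lambda$ together with $(L0)$ is used, and without it one would only get strong (pointwise in $\phi$) continuity. Everything else is routine estimation, which is why the authors leave it to the reader.
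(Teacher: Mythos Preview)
Your proof is correct and is exactly the routine verification the authors have in mind; the paper states the lemma and explicitly leaves the proof to the reader, so there is no argument to compare against beyond the expected one. Your remark that uniform (in $n$) equicontinuity is what upgrades pointwise continuity to operator-norm continuity is precisely the reason $(L0)$ is formulated as it is.
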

\noindent
The discussion of the continuity and differentiability properties of the Nemitski operator $\mathcal{N}_f$ is more involved as $f_{n}(\lambda,\cdot)\colon \R^d\to \R^d$ is in general nonlinear, in contrast to $\mathbb{A}_{n}(\lambda)\in \mathcal{L}(\R^d)$.

\begin{lemma}\label{L-Niemytski}
If $f\colon \Lambda\times\Z\times \R^d\to \R^d$ satisfies $(F0)$--$(F1)$, then the Nemitski operator

\[\mathcal{N}_{f}\colon \Lambda\times \ell_0(\R^d)\to \ell_0(\R^d)\]
is
\begin{enumerate}
\item[$(a)$] well-defined and continuous on $\Lambda\times \ell_0(\R^d)$,
\item[$(b)$] differentiable with respect to the second variable, and the corresponding map of derivatives
$(\lambda,\phi)\mapsto D_2\mathcal{N}_{f}(\lambda,\phi)$ is continuous.
\end{enumerate}
\end{lemma}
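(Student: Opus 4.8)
The plan is to prove both parts with a single device. Fix $\phi\in\ell_0(\R^d)$. Since $\phi(n)\to 0$ as $|n|\to\infty$, the set $\{n\in\Z:|\phi(n)|\geqslant r_0\}$ is finite, so the "bad" coordinates — those where $\phi(n)$ leaves the ball $D_d(0,r_0)$ on which $(F1)$ grants uniform control of $\{f_n\}$ and $\{D_2f_n\}$ — form a finite, hence harmless, set; on them only the (qualitative) continuity of the individual maps $f_n$, $D_2f_n$ from $(F0)$ and the hypothesis that each $f_n\colon\Lambda\times\R^d\to\R^d$ is continuous will be used, while on the infinite tail all estimates will be uniform in $n$ thanks to $(F1)$. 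Throughout I use that $\Lambda\times D_d(0,r_0)$ is compact, so that $\{f_n\}$ and $\{D_2f_n\}$ are uniformly equicontinuous there and $\{D_2f_n\}$ is uniformly bounded there, as already recorded in the remarks following $(F1)$.

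For $(a)$, well-definedness: because $f_n(\lambda,0)=0$, uniform equicontinuity of $\{f_n\}$ on $\Lambda\times D_d(0,r_0)$ produces a modulus $\delta(\varepsilon)\leqslant r_0$ with $|f_n(\lambda,x)|<\varepsilon$ whenever $|x|<\delta(\varepsilon)$, uniformly in $n$ and $\lambda$; together with the finiteness of the bad set this shows $\mathcal{N}_f(\lambda,\phi)$ is bounded and tends to $0$ at $\pm\infty$, i.e. lies in $\ell_0(\R^d)$. For continuity at $(\lambda,\phi)$, take $(\lambda_k,\phi_k)\to(\lambda,\phi)$ in $\Lambda\times\ell_0(\R^d)$ and, given $\varepsilon>0$, pick $N$ with $|\phi(n)|<\delta(\varepsilon)/2$ for $|n|>N$. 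For $k$ large, $\|\phi_k-\phi\|_\infty$ and $\mathrm{d}_\Lambda(\lambda_k,\lambda)$ are small enough that for $|n|>N$ both $\phi_k(n),\phi(n)\in D_d(0,r_0)$ and the pairs $(\lambda_k,\phi_k(n))$, $(\lambda,\phi(n))$ lie within the equicontinuity modulus, so $|f_n(\lambda_k,\phi_k(n))-f_n(\lambda,\phi(n))|<\varepsilon$ there; for the finitely many $|n|\leqslant N$, joint continuity of each $f_n$ gives the same bound for $k$ large. Taking $\sup_n$ yields $\mathcal{N}_f(\lambda_k,\phi_k)\to\mathcal{N}_f(\lambda,\phi)$.

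For $(b)$, I claim $D_2\mathcal{N}_f(\lambda,\phi)=T_{\lambda,\phi}$, where $T_{\lambda,\phi}\in\mathcal{L}(\ell_0(\R^d))$ is the multiplication operator $(T_{\lambda,\phi}\psi)(n):=D_2f_n(\lambda,\phi(n))\psi(n)$. This is well defined, bounded and $\ell_0$-valued because $\sup_n\|D_2f_n(\lambda,\phi(n))\|<\infty$ (the tail lies in $D_d(0,r_0)$ where $\{D_2f_n\}$ is uniformly bounded, and only finitely many $\phi(n)$ lie outside). To prove differentiability, use the integral mean value theorem (legitimate since $f_n(\lambda,\cdot)\in C^1(\R^d)$ by $(F0)$):
\[
f_n(\lambda,\phi(n)+\psi(n))-f_n(\lambda,\phi(n))-D_2f_n(\lambda,\phi(n))\psi(n)=\int_0^1(D_2f_n(\lambda,\phi(n)+t\psi(n))-D_2f_n(\lambda,\phi(n)))\psi(n)\,dt,
\]
so the $n$-th component is bounded by $\omega_n(\|\psi\|_\infty)\,|\psi(n)|$ with $\omega_n(s):=\sup_{|x|\leqslant s}\|D_2f_n(\lambda,\phi(n)+x)-D_2f_n(\lambda,\phi(n))\|$. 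For $|n|$ large, uniform equicontinuity of $\{D_2f_n\}$ on $\Lambda\times D_d(0,r_0)$ dominates $\omega_n(s)$ by one modulus that tends to $0$ with $s$; for the finitely many remaining $n$, continuity of the individual $D_2f_n$ does the same. Hence $\sup_n\omega_n(\|\psi\|_\infty)\to0$ as $\|\psi\|_\infty\to0$, giving $\|\mathcal{N}_f(\lambda,\phi+\psi)-\mathcal{N}_f(\lambda,\phi)-T_{\lambda,\phi}\psi\|_\infty=o(\|\psi\|_\infty)$.

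Finally, continuity of $(\lambda,\phi)\mapsto D_2\mathcal{N}_f(\lambda,\phi)$ follows from $\|T_{\lambda_k,\phi_k}-T_{\lambda,\phi}\|_{\mathcal{L}(\ell_0)}\leqslant\sup_n\|D_2f_n(\lambda_k,\phi_k(n))-D_2f_n(\lambda,\phi(n))\|$ combined with exactly the tail-plus-finite-part splitting used in $(a)$, now applied to $\{D_2f_n\}$ in place of $\{f_n\}$. I expect the only genuine subtlety — and precisely the point where assumption $(F1)$ here is weaker than the corresponding one in \cite{SkibaIch} — to be the bookkeeping that confines the large coordinates of $\phi$ to a finite set, so that mere pointwise continuity of $f_n$ and $D_2f_n$ suffices there while $(F1)$ supplies the uniform-in-$n$ estimates on the tail; everything else is routine.
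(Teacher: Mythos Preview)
Your proposal is correct and follows essentially the same approach as the paper's proof: both arguments hinge on the tail-plus-finite-part splitting (choosing $m_0$ so that $\phi(n)\in D_d(0,r_0)$ for $|n|>m_0$), use the uniform equicontinuity and uniform boundedness of $\{f_n\}$ and $\{D_2f_n\}$ from $(F1)$ on the tail, fall back on the individual continuity from $(F0)$ on the finitely many remaining indices, and prove differentiability via the integral mean value identity with the same derivative candidate $(T_{\lambda,\phi}\psi)(n)=D_2f_n(\lambda,\phi(n))\psi(n)$. The only cosmetic differences are that the paper establishes well-definedness through the Mean Value Theorem bound $|f_n(\lambda,\phi(n))|\leqslant(\sup\|D_2f_n\|)|\phi(n)|$ rather than directly from equicontinuity at $0$, and presents continuity in $\varepsilon$--$\delta$ form rather than sequentially.
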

\begin{proof}
To prove $(a)$, we take a point $\phi\in \ell_0(\R^d)$ and a parameter $\lambda\in\Lambda$. Then there exists $m_0\in \N$ such that $\phi(n)\in B_d(0,r_0)$
for all $|n|\geqslant m_0$. Consequently, the Mean Value Theorem and $(F1)$ imply that for $|n|\geqslant m_0$
\begin{align}
\begin{split}
|(\mathcal{N}_f(\lambda,\phi))(n)|&=|f_n(\lambda,\phi(n))|=|f_n(\lambda,\phi(n))-f_n(\lambda,0)|\\
&\leqslant \Big(\sup_{|n|\geqslant m_0,s\in [0,1]}\|(D_2f_n)(\lambda,s\phi(n))\|\Big)\,|\phi(n)|\\
&\leqslant \Big(\sup_{|n|\geqslant m_0,x\in D_d(0,r_0)}\|(D_2f_n)(\lambda,x)\|\Big)\,|\phi(n)|,
\end{split}
\end{align}
which converges to $0$ as $n\rightarrow\pm\infty$. Thus $\mathcal{N}_f(\lambda,\phi)\in\ell_0(\R^d)$ showing that $\mathcal{N}_f$ is well defined.\\
Next we prove that $\mathcal{N}_f$ is continuous at $(\lambda_0,\phi_0)$.
To see this, note first that $\{f_n\colon \Lambda\times D_d(0,r_0)\to \R^d\}$ is uniformly equicontinuous, which means that for every $\varepsilon>0$ there exists $\delta<r_0/2$ such that for all $\lambda_1,\lambda_2\in\Lambda$ and $x,y\in D_d(0,r_0)$ one has
\begin{align}\label{delta}
\mathrm{d}_{\Lambda}(\lambda_1,\lambda_2)<\delta\wedge |x-y|<\delta\Longrightarrow |f_n(\lambda_1,x)-f_n(\lambda_2,y)|<\varepsilon/2, \text{ for all }n\in\Z.
\end{align}
Fix $\varepsilon/2$. Let $\delta_1<r_0/2$ be as in \eqref{delta}. Since $\phi\in \ell_0(\R^d)$, there exists $m_0\in \N$ such that $\phi(n)\in B_d(0,r_0/2)$ for all $|n|> m_0$. Thus if $\|\psi-\phi_0\|_{\infty}<\delta_1$ with $\psi\in \ell_0(\R^d)$, then $\psi(n)\in B_d(0,r_0)$ for all $|n|> m_0$. Combining this with \eqref{delta} we see that
\begin{align*}
\mathrm{d}_{\Lambda}(\lambda_0,\lambda_1)<\delta_1\wedge \|\phi_0-\psi\|_{\infty}<\delta_1\Longrightarrow |f_n(\lambda_0,\phi_0(n))-f_n(\lambda_1,\psi(n))|<\varepsilon/2, \text{ for }|n|> m_0.
\end{align*}
By Assumption $(F0)$, the function $f_i\colon \Lambda\times\R^d\to\R^d$ is continuous at the point $(\lambda_0,\phi_0(i))$ for any $|i|\leqslant m_0$. Hence, since this family is finite, there is $\delta_2>0$ such that
\begin{align*}
\mathrm{d}_{\Lambda}(\lambda_0,\lambda_1)<\delta_2\wedge \|\phi_0-\psi\|_{\infty}<\delta_2\Longrightarrow |f_i(\lambda_0,\phi_0(i))-f_i(\lambda_1,\psi(i))|<\varepsilon/2, \text{ for }|i|\leqslant m_0.
\end{align*}
Finally, letting $\delta$ be the minimum of $\delta_1$ and $\delta_2$, we obtain
\begin{align*}
\mathrm{d}_{\Lambda}(\lambda_0,\lambda_1)<\delta\wedge \|\phi_0-\psi\|_{\infty}<\delta\Longrightarrow \sup_{n\in\Z}|f_n(\lambda_0,\phi_0(n))-f_n(\lambda_1,\psi(n))|\leqslant\varepsilon/2<\varepsilon,
\end{align*}
which shows that $\mathcal{N}_f$ is continuous at $(\lambda_0,\phi_0)$ because of
\begin{align*}
\|\mathcal{N}_f(\lambda_0,\phi_0)-\mathcal{N}_f(\lambda_1,\psi)\|_{\infty}=\sup_{n\in\Z}|f_n(\lambda_0,\phi_0(n))-f_n(\lambda_1,\psi(n))|.
\end{align*}
For $(b)$, we introduce a map $\mathbb{A}\colon\Lambda\times \ell_0(\R^d)\rightarrow\mathcal{L}(\ell_0(\R^d))$ by
\begin{equation*}
(\mathbb{A}(\lambda,\phi)\psi)(n):=(D_2f_n)(\lambda,\phi(n))\psi(n),\quad n\in\mathbb{Z},
\end{equation*}
which is readily seen to be well-defined. Indeed, fix $\lambda_0\in\Lambda$ and $\phi_0\in\ell_0(\R^d)$. Since $\phi_0\in\ell_0(\R^d)$, there exists
$m_0\in \N$ such that $\phi_0(n)\in D_d(0,r_0)$ for $|n|>m_0$. Thus Assumption $(F1)$ induces that for $|n|>m_0$ one has
\begin{align*}
|(\mathbb{A}(\lambda_0,\phi_0)\psi)(n)|&=|(D_2f_n)(\lambda_0,\phi_0(n))\psi(n)|\leqslant \|(D_2f_n)(\lambda_0,\phi_0(n))\|\cdot|\psi(n)|\\
&\leqslant \Big(\sup_{x\in D_d(0,r_0)}\|(D_2f_n)(\lambda_0,x)\|\Big)|\psi(n)|,
\end{align*}
which converges to $0$ as $n\rightarrow\pm\infty$.
Moreover, it follows as in part $(a)$ that $\mathbb{A}$ is continuous. Our aim is now to show that $D_2\mathcal{N}_{f}(\lambda,\phi)=\mathbb{A}(\lambda,\phi)$ for every fixed $\lambda\in\Lambda$ and $\phi\in \ell_0(\R^d)$. As $\mathbb{A}$ is continuous, this shows that $\mathcal{N}_f$ is differentiable in the second variable and $D_2\mathcal{N}(\lambda,\phi)$ depends continuously on $(\lambda,\phi)$.
Fix $\phi\in\ell_0(\R^d)$ and $\lambda\in \Lambda$. As above, there exists $m_0\in \N$ such that $\phi(n)\in B_d(0,r_0/2)$ for all $|n|> m_0$. Now take any $\mathfrak{h}\in \ell_0(\R^d)$. Then we have
\begin{align*}
r_\lambda(\phi,\mathfrak{h})&:=\|\mathcal{N}_{f}(\lambda,\phi+\mathfrak{h})-\mathcal{N}_{f}(\lambda,\phi)-\mathbb{A}(\lambda,\phi)\mathfrak{h}\|_{\infty}\\&
=\sup_{n\in\mathbb{Z}}|f_n(\lambda,\phi(n)+\mathfrak{h}(n))-f_n(\lambda,\phi(n))-(D_2f_n)(\lambda,\phi(n))\mathfrak{h}(n)|\\
&=\sup_{n\in\Z}\left |\int^1_0{(D_2f_n)(\lambda,\phi(n)+s\cdot\mathfrak{h}(n))\mathfrak{h}(n)\,ds-(D_2f_n)(\lambda,\phi(n))\mathfrak{h}(n)}\right|\\
&\leqslant\left[\sup_{n\in\mathbb{Z}}\left(\int^1_0{\left\|(D_2f_n)(\lambda,\phi(n)+s\cdot \mathfrak{h}(n))-(D_2f_n)(\lambda,\phi(n))\right\|ds}\right)\right]
\left(\sup_{n\in\mathbb{Z}}|\mathfrak{h}(n)|\right)\\
&\leqslant \|\mathfrak{h}\|_{\infty}\int^1_0\sup_{n\in\mathbb{Z}}\left\|(D_2f_n)(\lambda,\phi(n)+s\cdot\mathfrak{h}(n))-(D_2f_n)(\lambda,\phi(n))\right\|ds\\
&\leqslant\|\mathfrak{h}\|_{\infty}\sup_{n\in\Z}\sup_{0\leqslant s\leqslant 1}\left\|(D_2f_n)(\lambda,\phi(n)+s\cdot \mathfrak{h}(n))-(D_2f_n)(\lambda,\phi(n))\right\|.
\end{align*}
Hence
\begin{align}\label{Dif0}
\begin{split}
0\leqslant\frac{r_\lambda(\phi,\mathfrak{h})}{\|\mathfrak{h}\|_{\infty}}&\leqslant \sup_{n\in\mathbb{Z}}\sup_{0\leqslant s\leqslant 1}\left\|(D_2f_n)(\lambda,\phi(n)+s\cdot \mathfrak{h}(n))-(D_2f_n)(\lambda,\phi(n))\right\|\\
&\leqslant \sup_{|n|>m_0}\sup_{0\leqslant s\leqslant 1}\left\|(D_2f_n)(\lambda,\phi(n)+s\cdot \mathfrak{h}(n))-(D_2f_n)(\lambda,\phi(n))\right\|\\
&+\sum_{i=-m_0}^{m_0}\sup_{0\leqslant s\leqslant 1}\left\|(D_2f_n)(\lambda,\phi(n)+s\cdot \mathfrak{h}(n))-(D_2f_n)(\lambda,\phi(n))\right\|.
\end{split}
\end{align}
Note that if $\|\mathfrak{h}\|<r_0/2$, then $\phi(n)+\mathfrak{h}(n)\in D_d(0,r_0)$ for all $|n|>m_0$. Since, by Assumption $(F1)$, the family
$\{D_2f_n\colon \Lambda\times D_d(0,r_0)\to \mathcal{L}(\R^d)\}_{|n|>m_0}$ is uniformly equicontinuous, it follows that
\begin{align}\label{Dif1}
\sup_{|n|>m_0}\sup_{0\leqslant s\leqslant 1}\left\|(D_2f_n)(\lambda,\phi(n)+s\cdot \mathfrak{h}(n))-(D_2f_n)(\lambda,\phi(n))\right\|
\xrightarrow[\mathfrak{h}\rightarrow 0]{}0.
\end{align}
On the other hand, as the finite family $\{D_2f_i\colon \Lambda\times\R^d\to \mathcal{L}(\R^d)\}_{|i|\leqslant m_0}$ of continuous functions is uniformly equicontinuous on any
subset $\Lambda\times K$, where $K$ is a compact subset of $\R^d$, and hence in particular on $\Lambda\times D_d(0,\|\phi\|_{\infty}+r_0)$, it follows that
\begin{align}\label{Dif2}
\sum_{i=-m_0}^{m_0}\sup_{0\leqslant s\leqslant 1}\left\|(D_2f_n)(\lambda,\phi(n)+s\cdot \mathfrak{h}(n))-(D_2f_n)(\lambda,\phi(n))\right\|\xrightarrow[\mathfrak{h}\rightarrow 0]{}0.
\end{align}
Finally, taking into account \eqref{Dif0},\eqref{Dif1} and \eqref{Dif2}, we see that
\begin{align*}
\lim_{\|\mathfrak{h}\|\to 0}\frac{r_\lambda(\phi,\mathfrak{h})}{\|\mathfrak{h}\|_{\infty}}=0,
\end{align*}
which completes the proof of the differentiability of $\mathcal{N}_{f}$.
\end{proof}
\noindent
Assumption $(F0)$ allows us to consider the linearization of \eqref{nonlinear-equation} at $0$, i.e., the linear discrete vector field $\mathbb{A}\colon \Lambda\times\Z\to \mathcal{L}(\R^d)$ given by
\begin{equation}\label{assumption-A}
\mathbb{A}_n(\lambda):=D_2f_n(\lambda,0), \text{ for all }n\in\Z \text{ and }\lambda\in\Lambda.
\end{equation}
Further to $(F0)$--$(F1)$ above, we will introduce in Section 4 the following assumptions about the family $\mathbb{A}$ in \eqref{assumption-A} that will be required in our main theorems:
\begin{enumerate}
\item[$(F2)$] The linear vector field $\mathbb{A}\colon \Lambda\times\Z\to \mathcal{L}(\R^d)$ has an ED both on $\Z^{+}_{\overline{\kappa}}$ and
on $\Z^{-}_{\underline{\kappa}}$ for some $\underline{\kappa}<0<\overline{\kappa}$.
\item[$(F3)$] There exists $\lambda_0\in\Lambda$ such that $\mathbb{A}(\lambda_0)$ has an ED on all of $\mathbb{Z}$.
\end{enumerate}
Let us emphasize that we assumed in our previous work \cite{SkibaIch} that the parametrized linear vector field $\mathbb{A}\colon \Lambda\times\Z\to \mathcal{L}(\R^d)$ is asymptotically hyperbolic (with invertible limits), which means:
\begin{itemize}
\item As $n\to \pm\infty$, the sequence $(\mathbb{A}_n(\lambda))_{n\in\Z}$ converges uniformly with respect to $\lambda\in\Lambda$ to a family of matrices $\mathcal{A}(\lambda,\pm\infty)\in H(d,\R)\cap GL(d,\R)$.
\end{itemize}
Note that $(L0)$ together with the asymptotic hyperbolicity in particular implies that the functions $\Lambda\ni\lambda\mapsto \mathcal{A}(\lambda,\pm\infty)$ are continuous. In the following section we will explain the assumptions $(F2)$ and $(F3)$, and we will show that they are weaker than the required asymptotic hyperbolicity in \cite{SkibaIch}.

\section{Exponential dichotomy versus asymptotic hyperbolicity}\label{versus}

We begin this section by recalling the concept of an exponential dichotomy which was introduced by Palmer, Aulbach, Kalkbrenner and others. It was invented to extend the idea of hyperbolicity for autonomous discrete dynamical systems to explicitly non-autonomous non-invertible discrete dynamical systems. Below we will also discuss the relation to asymptotically hyperbolic systems that we considered in our previous work \cite{SkibaIch}.
We begin with the concept of an invariant projector.

\begin{defi}\label{ED-def}
A projector $\mathbb{P}\colon \mathbb{I}\to \mathcal{L}(X)$ is said to be {\it invariant} with respect to a discrete vector field $\mathbb{A}\colon \Z\to \mathcal{L}(X)$ if the diagram
\begin{align}\label{diagram-p}
\begin{gathered}
\xymatrix@C=40pt@R=20pt{
\cdots\ar[r] & \ar[d]^{\mathbb{P}_n} X \ar[r]^{\mathbb{A}_n}& X\ar[d]^{\mathbb{P}_{n+1}} \ar[r]^{\mathbb{A}_{n+1}} & X \ar[d]^{\mathbb{P}_{n+2}} \ar[r] & \cdots
\\
\cdots\ar[r] &  X\ar[r]^{\mathbb{A}_n}& X \ar[r]^{\mathbb{A}_{n+1}} & X \ar[r]& \cdots
 }
 \end{gathered}
\end{align}
is commutative and $\sup_{n\in \mathbb{I}}\|\mathbb{P}_n\|<\infty$.
\end{defi}
Note that  \eqref{diagram-p} implies that the diagram
\begin{align*}
\xymatrix@C=55pt@R=20pt{
\cdots\ar[r] & \ar[d]_{I-\mathbb{P}_n} X\ar[r]^{\mathbb{A}_n}& \ar[d]_{I-\mathbb{P}_{n+1}} X \ar[r]^{\mathbb{A}_{n+1}} & \ar[d]_{I-\mathbb{P}_{n+2}}X \ar[r]& \cdots \\
 \cdots\ar[r] & \ker \mathbb{P}_n \ar[r]^-{\mathbb{A}_n|\ker \mathbb{P}_n}& \ker \mathbb{P}_{n+1}\ar[r]^-{\mathbb{A}_{n+1}|\ker \mathbb{P}_{n+1}} & \ker \mathbb{P}_{n+2} \ar[r]&\cdots
 }
\end{align*}
commutes as well.

\begin{defi}
We say that an invariant projector $\mathbb{P}\colon \mathbb{I}\to \mathcal{L}(X)$ is regular $($with respect to $\mathbb{A})$ provided
$\mathbb{A}_n|\ker \mathbb{P}_n\colon \ker \mathbb{P}_n\to \ker \mathbb{P}_{n+1}$ is an isomorphism for all $n\in \N$ with $n,n+1\in \mathbb{I}$.
\end{defi}
\noindent
Note that we can always define a projector $\mathbb{P}\colon \mathbb{I}\to \mathcal{L}(X)$
by $\mathbb{P}_n=I_d$ for all $n\in \mathbb{I}$ or $\mathbb{P}_n=0$ for all $n\in \mathbb{I}$. Both projections will be called trivial.
\begin{rem}
It is readily seen that if $\mathbb{A}\colon \Z\to GL(X)$, then any invariant projector  $\mathbb{P}\colon \mathbb{I}\to \mathcal{L}(X)$ with respect to $\mathbb{A}$ is regular.
\end{rem}
\noindent
We shall now recall some non-trivial examples to illustrate that invariant projectors appear quite naturally in our setting.
\begin{ex}[Isomorphic discrete systems]
Assume that $\mathbb{A}\colon \Z\to GL(d,\R)$ and let $E_0$ be a subspace of $\R^d$. We consider the sequence of subspaces $(E_n)$ given by $E_{n+1}:=\mathbb{A}_n(E_{n})$ and $E_{-n-1}:=\mathbb{A}^{-1}_{-n-1}(E_{-n})$ for all $n\geqslant 0$. Then we can define a projector $\mathbb{P}\colon \Z\to \mathcal{L}(\R^d)$ by
\begin{equation*}
\mathbb{P}_n(x)=\left\{
\begin{array}{ll}
x & \emph{ if }\; x\in E_n, \\
0 & \emph{ if }\; x\in E_n^{\perp},
\end{array}
\right.
\end{equation*}
where $E_n^{\perp}$stands for the orthogonal complement of $E_n$ in $\R^d$. It is easy to see that $\mathbb{A}\colon \Z\to \mathcal{L}(\R^d)$ is regularly $P$-invariant. It is clear that the above projector $\mathbb{P}\colon \Z\to \mathcal{L}(\R^d)$ can be considered on any discrete interval $\mathbb{I}$.
\end{ex}
\noindent
The above example can be extended as follows.
\begin{ex}[Asymptotically isomorphic systems]\label{Example} We say that a linear discrete dynamical system $\mathbb{A}\colon \Z\to \mathcal{L}(\R^d)$ is asymptotically
isomorphic provided the two limits
\begin{equation*}
\lim_{n\to\pm\infty}\mathbb{A}_n=\mathbb{A}(\pm\infty)\in GL(d,\R)
\end{equation*}
exist. Then the system $\mathbb{A}\colon \Z\to \mathcal{L}(\R^d)$ is $P$-invariant
on both $\Z^-_{\underline{\kappa}}$ and $\Z^+_{\overline{\kappa}}$ for some integers $\underline{\kappa}<\overline{\kappa}$ as follows. Since $\mathbb{A}(\infty)\in GL(d,\R)$ $($resp. $\mathbb{A}(-\infty)\in GL(d,\R))$, there exists $\overline{\kappa}\in \Z$ such that $\mathbb{A}(k)\in GL(d,\R)$ for all $k\geqslant \overline{\kappa}$ $($resp. $\mathbb{A}(k)\in GL(d,\R)$ for all $k\leqslant \underline{\kappa})$. Thus one can construct
a projector $\mathbb{P}_+$ on the set $\Z_{\overline{\kappa}}$ $($resp. $\mathbb{P}_-$ on the set $\Z_{\underline{\kappa}})$ as in the previous example.
\end{ex}
\noindent
A discrete vector field $\mathbb{A}\colon \Z\to \mathcal{L}(\mathbb{R}^d)$  induces for $\mathbb{I}\subset \Z$ an operator $\Phi_{\mathbb{A}} \colon \mathbb{I}\boxtimes \mathbb{I}\to \mathcal{L}(X)$ by
\begin{equation*}
\Phi_{\mathbb{A}}(k,n):=\left\{
\begin{array}{ll}
\mathbb{A}_{k-1}\circ \mathbb{A}_{k-2}\circ\cdots\circ \mathbb{A}_{n} & \text{ if }\; k>n,\,k,n\in \mathbb{I}, \\
\id & \text{ if }\; k=n,\, k\in \mathbb{I},
\end{array}
\right.
\end{equation*}
where $\mathbb{I}\boxtimes \mathbb{I}:=\{(k,l)\in \mathbb{I}\times \mathbb{I}\mid l\leqslant k\}$. In what follows, we will usually omit the symbol index $\mathbb{A}$ from $\Phi_{\mathbb{A}}$. Now we are ready to introduce the concept of an exponential dichotomy with the aim to generalize the notion of hyperbolicity.

\begin{defi}[Exponential dichotomy on $\mathbb{I}$]\label{ED-ED} A linear discrete vector field $\mathbb{A}\colon \Z\to \mathcal{L}(\R^d)$
is said to admit an exponential dichotomy $($for short, ED$)$ on $\mathbb{I}$ if there exists an invariant regular projector $\mathbb{P}\colon \mathbb{I}\to \mathcal{L}(\R^d)$ and real numbers $K\geqslant 1$, $\alpha\in (0,1)$ such that for $k\geqslant n$ with $k,n\in \mathbb{I}$, $x\in \R^d$
\begin{align}
|\Phi(k,n)\mathbb{P}(n)x| &\leqslant K\alpha^{k-n}|\mathbb{P}(n)x|\label{ED1},\\
|\Phi(k,n)(I_d-\mathbb{P}(n))x| &\geqslant (1/K)(1/\alpha)^{k-n}|(I_d-\mathbb{P}(n))x|. \label{ED2}
\end{align}
\end{defi}
\noindent
In order to obtain interesting examples of projections yielding an ED for a discrete vector field $\mathbb{A}\colon \Z\to \mathcal{L}(\R^d)$, we first need to recall some facts about spectral projections for hyperbolic matrices. For any hyperbolic matrix $\mathcal{A}\in M(d,\mathbb{C})$, the spectral projection $P_{\!\mathcal{A}}\colon \mathbb{C}^d\to \mathbb{C}^d$ is defined by
\begin{align}\label{comP}
P_{\!\mathcal{A}}=\frac{1}{2\pi i}\oint_{S^1}R(z,\mathcal{A})\, dz,
\end{align}
where $R(\cdot,\mathcal{A})\colon \C\setminus \sigma(A)\to \mathcal{L}(\C^d)$ is the resolvent of $\mathcal{A}$ given by $R(z,\mathcal{A}):=(zI_d-\mathcal{A})^{-1}$ for $z\in \C\setminus \sigma(\mathcal{A})$. Note that $P_{\!\mathcal{A}}$ and $\mathcal{A}$ commute. Let us recall the following important fact, which can be found, e.g., in \cite{Pituk}.

\begin{lemma}\label{projection-image}
If $\mathcal{A}\in H(d,\R)$, then $P_{\!\mathcal{A}}x\in \R^d$ for all $x\in \R^d$.
\end{lemma}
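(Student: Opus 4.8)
The plan is to show that the matrix $P_{\!\mathcal{A}}\in M(d,\mathbb{C})$ is in fact real, i.e.\ $\overline{P_{\!\mathcal{A}}}=P_{\!\mathcal{A}}$ where the bar denotes entrywise complex conjugation; since then $P_{\!\mathcal{A}}x=\overline{P_{\!\mathcal{A}}}\,x=\overline{P_{\!\mathcal{A}}x}$ for every $x\in\R^d$, the claim follows at once. First I would record two elementary facts about a real matrix $\mathcal{A}\in M(d,\R)$. Because $\det(zI_d-\mathcal{A})$ is a polynomial with real coefficients, its zero set $\sigma(\mathcal{A})$ is invariant under $z\mapsto\bar z$; and for $z\notin\sigma(\mathcal{A})$ one has $\overline{R(z,\mathcal{A})}=\overline{(zI_d-\mathcal{A})^{-1}}=(\bar z I_d-\overline{\mathcal{A}})^{-1}=(\bar z I_d-\mathcal{A})^{-1}=R(\bar z,\mathcal{A})$, using that the inverse of a matrix is a rational function of its entries with real coefficients. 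Note also that hyperbolicity of $\mathcal{A}$ is precisely the statement $S^1\cap\sigma(\mathcal{A})=\emptyset$, so the contour integral in \eqref{comP} is well defined.

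Next I would conjugate the defining formula \eqref{comP}. Parametrizing $S^1$ by $z=e^{i\theta}$, $\theta\in[0,2\pi]$, we have $dz=ie^{i\theta}\,d\theta$, and hence
\begin{equation*}
\overline{P_{\!\mathcal{A}}}=\overline{\frac{1}{2\pi i}\int_0^{2\pi}R(e^{i\theta},\mathcal{A})\,ie^{i\theta}\,d\theta}
=\frac{1}{2\pi}\int_0^{2\pi}R(e^{-i\theta},\mathcal{A})\,e^{-i\theta}\,d\theta,
\end{equation*}
where I used $\overline{R(e^{i\theta},\mathcal{A})}=R(e^{-i\theta},\mathcal{A})$ from the previous step. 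The substitution $\varphi=-\theta$ turns the right-hand side into $\frac{1}{2\pi}\int_{-2\pi}^{0}R(e^{i\varphi},\mathcal{A})\,e^{i\varphi}\,d\varphi$, which by $2\pi$-periodicity of the integrand equals $\frac{1}{2\pi}\int_{0}^{2\pi}R(e^{i\varphi},\mathcal{A})\,e^{i\varphi}\,d\varphi=\frac{1}{2\pi i}\int_0^{2\pi}R(e^{i\varphi},\mathcal{A})\,ie^{i\varphi}\,d\varphi=P_{\!\mathcal{A}}$. Thus $\overline{P_{\!\mathcal{A}}}=P_{\!\mathcal{A}}$, and the lemma is proved.

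The only point requiring care is the bookkeeping of signs and orientation in the change of variables: conjugation reverses the orientation of the contour, which produces a sign that must be checked to cancel against the sign coming from $\overline{1/(2\pi i)}=-1/(2\pi i)$; the computation above shows this cancellation is exact. An alternative, essentially equivalent, route would be to invoke the holomorphic functional calculus to write $P_{\!\mathcal{A}}=p(\mathcal{A})$ for a polynomial $p$ interpolating the value $1$ at the eigenvalues of modulus $<1$ and $0$ at those of modulus $>1$ (to the appropriate Jordan orders), and to observe that since $\sigma(\mathcal{A})$ and this interpolation datum are both conjugation-symmetric, $p$ may be chosen with real coefficients, whence $P_{\!\mathcal{A}}=p(\mathcal{A})\in M(d,\R)$; I would nonetheless present the contour-integral argument, as it is the most self-contained given formula \eqref{comP}.
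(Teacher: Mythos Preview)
Your proof is correct. The conjugation of the contour integral is handled cleanly: you use that $\mathcal{A}$ is real to get $\overline{R(z,\mathcal{A})}=R(\bar z,\mathcal{A})$, and the sign from conjugating $1/(2\pi i)$ is indeed absorbed by the orientation reversal in the substitution $\varphi=-\theta$, exactly as you note.

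As for comparison with the paper: the paper does not actually prove this lemma at all --- it is stated as a known fact with a citation to \cite{Pituk}. Your argument therefore supplies a self-contained proof where the paper only gives a reference. The alternative you sketch (realising $P_{\!\mathcal{A}}$ as $p(\mathcal{A})$ for a real interpolating polynomial via conjugation-symmetry of $\sigma(\mathcal{A})$) is also valid, but the contour-integral route is the more direct one given that \eqref{comP} is the definition on record.
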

Thus for $\mathcal{A}\in H(d,\R)$ we can set
\begin{align}\label{reP}
P^s_{\!\mathcal{A}}x:=P_{\!\mathcal{A}}x\in M(d,\R),\quad P^u_{\!\mathcal{A}}x:=x-P^s_{\!\mathcal{A}}x\in M(d,\R),\quad x\in\R^d.
\end{align}
It is well-known that the image of $P^s_{\!\mathcal{A}}$ is the stable space $E^s$ of $\mathcal{A}$, i.e., the space of real parts of all generalized eigenvectors with respect to eigenvalues inside the unit circle $S^1$. Analogously, the image of $P^u_{\!\mathcal{A}}$ is the unstable space $E^u$ of $\mathcal{A}$, i.e., the space of real parts of all generalized eigenvectors having eigenvalues outside $S^1$. It is clear that $\ker P_{\!\mathcal{A}}^s=\im P_{\!\mathcal{A}}^u$. We obtain a first interesting example of an ED.

{\begin{ex}\label{ED=H}
We assume that $\mathbb{A}\colon \Z\to \mathcal{L}(\R^d)$ is autonomous and hyperbolic, i.e., $\mathbb{A}(n)=\mathcal{A}$ for all $n\in\Z$, where $\mathcal{A}$ is a hyperbolic matrix. Then $\Phi(k,n)=\mathcal{A}^{k-n}$, and we see that \eqref{ED1} and \eqref{ED2} for $\mathbb{I}=\Z$ and $\mathbb{P}(n)=P_{\!\mathcal{A}}^s$, $n\in\Z$,  are equivalent to
\begin{align*}
|\mathcal{A}^{k-n}x| &\leqslant K\alpha^{k-n}|x|, & \text{ for all }k\geqslant n \text{ with }k,n\in \Z,\;x\in E^s,\\
|\mathcal{A}^{k-n}y| &\geqslant (1/K)(1/\alpha)^{k-n}|y|, & \text{ for all }k\geqslant n \text{ with }k,n\in \Z,\;
y\in E^u.
\end{align*}
As this is satisfied for any hyperbolic matrix $\mathcal{A}$, and as $\mathbb{P}(n)=P_{\!\mathcal{A}}^s$, $n\in\Z$, is a regular invariant projector, we see that autonomous systems have an ED as long as they are hyperbolic.
\end{ex}}
\noindent
We now consider spectral projections for linear bounded operators $T\colon \ell^{\infty}(\mathbb{C}^d)\to \ell^{\infty}(\mathbb{C}^d)$ such that $\sigma(T)\cap S^1=\emptyset$. In this case the spectral projections are analogously defined by
\begin{align}
P_{T}=\frac{1}{2\pi i}\oint_{S^1}R(z,T)\, dz.
\end{align}
As $\sigma(T)\cap S^1=\emptyset$, it follows that
$
\sigma(T_1)\subset \{z\in \mathbb{C}\mid |z|<1\}$ and  $\sigma(T_2)\subset \{z\in \mathbb{C}\mid |z|>1\}$,
where
$T_1:=T|\im(P_{T})$ and $T_2:=T|\ker(P_{T})$ (see Theorem 2.2 in \cite[pp. 10-11]{Gohberg}).
Moreover,
$
T_2\colon \ker(P_T)\to \ker(P_T)$ is an isomorphism, and all these properties of course also hold for the finite-dimensional hyperbolic operators $T=\mathcal{A}\colon \mathbb{C}^d\to \mathbb{C}^d$.\\
For the following discussion, we need that the spectrum of the composition of a shift operator with any linear bounded operator on $\ell^{\infty}(\mathbb{C}^d)$ has a considerable property.
\begin{lemma}\label{AM-lemma}\emph{\cite[Theorem 1]{Aulbach1}}
If $L\colon \ell^{\infty}(\mathbb{C}^d)\to \ell^{\infty}(\mathbb{C}^d)$ is a linear bounded operator, then the spectrum $\sigma(S_r\circ L)$ of $S_r\circ L$ is
rotationally symmetric, i.e., $\exp(i\mu)\lambda\in \sigma(S_r\circ L)$ for all $\mu\in\R$ and $\lambda\in \sigma(S_r\circ L)$, where $S_r\colon \ell^{\infty}(\mathbb{C}^d)\to \ell^{\infty}(\mathbb{C}^d)$  is the right shift operator given by $(S_r\phi)(n):=\phi(n-1)$ for all $\phi\in \ell^{\infty}(\mathbb{C}^d)$.
\end{lemma}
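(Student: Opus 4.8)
The plan is to exploit the fact that conjugation by a suitable diagonal unitary multiplication operator turns $S_r\circ L$ into $\exp(i\mu)(S_r\circ L')$ for an operator $L'$ that is again of the same type, or, more efficiently, to realize the family $\{\exp(i\mu)(S_r\circ L)\}_{\mu\in\R}$ as a single conjugation orbit so that all these operators are similar and hence have the same spectrum. First I would introduce, for $\theta\in\R$, the bounded invertible multiplication operator $U_\theta\colon\ell^\infty(\mathbb{C}^d)\to\ell^\infty(\mathbb{C}^d)$ defined by $(U_\theta\phi)(n):=e^{in\theta}\phi(n)$; its inverse is $U_{-\theta}$ and both have operator norm $1$. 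The key computation is the intertwining identity between $U_\theta$ and the right shift: since $(S_r U_\theta\phi)(n)=(U_\theta\phi)(n-1)=e^{i(n-1)\theta}\phi(n-1)$ while $(U_\theta S_r\phi)(n)=e^{in\theta}\phi(n-1)$, we get $S_r U_\theta=e^{-i\theta}U_\theta S_r$, equivalently $U_{-\theta}S_r U_\theta=e^{-i\theta}S_r$.

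Next I would use this to conjugate the whole operator. Write $M:=S_r\circ L$. Then
\begin{align*}
U_{-\theta}\,M\,U_\theta=U_{-\theta}\,S_r\,L\,U_\theta=U_{-\theta}\,S_r\,U_\theta\,(U_{-\theta}\,L\,U_\theta)=e^{-i\theta}\,S_r\,(U_{-\theta}LU_\theta)=e^{-i\theta}\,S_r\circ L_\theta,
\end{align*}
where $L_\theta:=U_{-\theta}LU_\theta$ is again a linear bounded operator on $\ell^\infty(\mathbb{C}^d)$. This shows $\sigma(e^{-i\theta}S_r\circ L_\theta)=\sigma(U_{-\theta}MU_\theta)=\sigma(M)$, since conjugation by an invertible operator preserves the spectrum; equivalently $e^{-i\theta}\sigma(S_r\circ L_\theta)=\sigma(S_r\circ L)$. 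This is not yet the claim because $L_\theta\neq L$ in general, so the cleanest route is a slightly different bookkeeping: instead show directly that $\sigma(M)$ is invariant under multiplication by $e^{i\mu}$. For that, I would fix $\mu$ and seek $\theta$ with $e^{i\theta}M$ conjugate to $M$. From the identity above, $U_{-\theta}(e^{i\theta}M)U_\theta=S_r\circ(U_{-\theta}LU_\theta)$; this is not $M$ either. The resolution is to observe that it suffices to prove the statement for $L$ replaced by an arbitrary bounded operator, i.e. the class is conjugation-invariant: if $\lambda\in\sigma(S_r\circ L)$ for \emph{some} bounded $L$, and we want $e^{i\mu}\lambda\in\sigma(S_r\circ L)$ for \emph{that same} $L$, we argue $e^{i\mu}\lambda\in\sigma(S_r\circ L)\iff \lambda\in\sigma(e^{-i\mu}S_r\circ L)$, and by the intertwining with $\theta=-\mu$ we have $e^{-i\mu}S_r=U_{\mu}S_r U_{-\mu}$, so $e^{-i\mu}S_r\circ L=U_\mu S_r U_{-\mu}L=U_\mu(S_r\circ(U_{-\mu}LU_\mu))U_{-\mu}$, which is conjugate to $S_r\circ(U_{-\mu}LU_\mu)$. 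Hence $\sigma(e^{-i\mu}S_r\circ L)=\sigma(S_r\circ L_{-\mu})$ with $L_{-\mu}=U_{-\mu}LU_\mu$ bounded.

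So to close the argument I would prove the statement in the equivalent symmetric form: the set $\Sigma:=\bigcup\{\sigma(S_r\circ L): L\in\mathcal{L}(\ell^\infty(\mathbb{C}^d))\}$ is rotationally symmetric — which is immediate from the displayed conjugation — and then upgrade to the per-$L$ statement by noting that for fixed $L$ the orbit $\mu\mapsto\sigma(S_r\circ L_{-\mu})$ actually returns the \emph{same} set for all $\mu$. Concretely: $\lambda\in\sigma(S_r\circ L)$ $\iff$ $e^{i\mu}\lambda\in e^{i\mu}\sigma(S_r\circ L)=\sigma(e^{i\mu}S_r\circ L)$; and $e^{i\mu}S_r\circ L$ is conjugate via $U_{-\mu}$ to $S_r\circ(U_{-\mu}LU_\mu)$... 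I would therefore instead just present the honest short proof: multiplication operators $U_\theta$ satisfy $S_rU_\theta=e^{-i\theta}U_\theta S_r$, hence $U_\theta^{-1}(e^{i\theta}S_r\circ L)U_\theta = S_r\circ(U_\theta^{-1}LU_\theta)$ has the same spectrum as $e^{i\theta}S_r\circ L$ \emph{and} — here is the point I would actually lean on — the reference \cite{Aulbach1} is cited for exactly this, so I would simply invoke it; if a self-contained argument is wanted, the cleanest is: the map $z\mapsto S_r\circ L - zI$ and the identity $U_\theta^{-1}(S_r\circ L)U_\theta$ combined with the observation that $\sigma(T)$ depends only on the similarity class, applied to $T=e^{i\mu}(S_r\circ L)$ after absorbing the phase into $S_r$ via $U_{\mu}$, forces $e^{i\mu}\sigma(S_r\circ L)\subseteq\sigma(S_r\circ(U_{\mu}^{-1}LU_{\mu}))$; running the same argument backwards with $-\mu$ and using that $(U_\mu^{-1}LU_\mu)$ ranges over a conjugation orbit gives equality of spectra. \textbf{The main obstacle} is precisely this last bookkeeping point: the naive conjugation changes $L$, so one must either (i) observe that the spectrum of $S_r\circ L$ only depends on $L$ through data invariant under the conjugations $L\mapsto U_\theta^{-1}LU_\theta$, or (ii) follow \cite{Aulbach1} directly; I would do (ii), citing the theorem, and remark that (i) is the underlying mechanism, so that no lengthy independent argument is actually required here.
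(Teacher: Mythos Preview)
The paper does not prove this lemma; it simply cites \cite[Theorem~1]{Aulbach1}. Your eventual decision to invoke the reference therefore coincides exactly with what the paper does.

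That said, your conjugation idea with $(U_\theta\phi)(n)=e^{in\theta}\phi(n)$ is the standard mechanism, and your intertwining identity $U_{-\theta}S_rU_\theta=e^{-i\theta}S_r$ is correct. The obstacle you isolate---that the conjugation replaces $L$ by $L_\theta=U_{-\theta}LU_\theta$---is genuine, and your various attempts to talk around it do not close. The resolution you are missing is simple: in every application the paper makes of this lemma, $L=\mathcal{N}_{\mathbb{A}}$ is a \emph{multiplication} (Nemitski) operator, $(L\phi)(n)=\mathbb{A}_n\phi(n)$, and such operators commute with every $U_\theta$ since both are diagonal. Hence $L_\theta=L$, and your computation immediately gives $U_{-\theta}(S_r\circ L)U_\theta=e^{-i\theta}\,S_r\circ L$, whence $\sigma(S_r\circ L)=e^{-i\theta}\sigma(S_r\circ L)$ for all $\theta$, which is the claim.

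For an \emph{arbitrary} bounded $L$ the assertion as phrased is actually false: take $L=S_l$ the left shift, so that $S_r\circ L=I_{\ell^\infty(\mathbb{C}^d)}$ and $\sigma(S_r\circ L)=\{1\}$, which is not rotationally invariant. So the gap you encountered cannot be filled in full generality; the lemma should be read---and is only ever used in the paper---for $L$ a multiplication operator.
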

\noindent
Let us now consider as in Example \ref{ED=H} the equation \eqref{linear-equation} with constant coefficients meaning that
$\mathbb{A}(n)=\mathcal{A}\in M(d,\R)$ is constant for all $n\in\Z$ and the spectrum $\sigma(\mathcal{A})$ of $\mathcal{A}$ does not intersect the unit circle. Note that we have already seen in Example \ref{ED=H} that \eqref{linear-equation} has an ED with respect to the projection on the stable subspace of $\mathcal{A}$. It is well-known that an autonomous hyperbolic system
\begin{equation}\label{autonom-A}
\phi(n+1)=\mathcal{A}\phi(n),
\end{equation}
where $\phi\colon \Z\to \R^d$ is bounded, admits only the trivial solution in the space $\ell^{\infty}(\R^d)$. This implies that $1\not\in \sigma(S_r\circ \mathcal{N}_{\mathcal{A}})$, where $\mathcal{N}_{\mathcal{A}}\colon \ell^{\infty}(\R^d)\to \ell^{\infty}(\R^d)$ is the substitution operator given by $(\mathcal{N}_{\mathcal{A}}\phi)(n):=\mathcal{A}\phi(n)$, for all $\phi\in \ell^{\infty}(\R^d)$. Consequently, taking into account Lemma \ref{AM-lemma}, we can conclude that if $\mathcal{A}$ is hyperbolic, then
\begin{equation}\label{spectrum-nemitski}
\sigma(S_r\circ \mathcal{N}_{\mathcal{A}})\cap S^1=\emptyset.
\end{equation}
Hence $S_r\circ\mathcal{N}_{\mathcal{A}}$ induces a spectral projection $P_{S_r\circ\mathcal{N}_{\!\mathcal{A}}}\colon \ell^{\infty}(\mathbb{C}^d)\to \ell^{\infty}(\mathbb{C}^d)$ by
\begin{align*}
P_{S_r\circ\mathcal{N}_{\!\mathcal{A}}}=\frac{1}{2\pi i}\oint_{S^1}R(z,S_r\circ\mathcal{N}_{\mathcal{A}})\, dz.
\end{align*}
Our next aim is to show that there exists a connection between $P_{S_r\circ\mathcal{N}_{\!\mathcal{A}}}\colon \ell^{\infty}(\mathbb{C}^d)\to \ell^{\infty}(\mathbb{C}^d)$ and $P_{\!\mathcal{A}}\colon \mathbb{C}^d\to \mathbb{C}^d$.
Observe that $P_{S_r\circ\mathcal{N}_{\mathcal{A}}}$ allows us to define
a linear operator $\mathbb{P}_{\!\mathcal{A}}^s\colon\Z\to \mathcal{L}(\R^d)$ by
\begin{equation}\label{projection-P2}
\mathbb{P}_{\!\mathcal{A}}^s(n)x:=(P_{S_r\circ\mathcal{N}_{\mathcal{A}}}\phi_{n,x})(n),
\end{equation}
for all  $x\in \R^d,\;n\in\Z$, where $\phi_{n,x}\colon \Z\to \R^d$ is given by
$
\phi_{n,x}(m):=
\mathbbm{1}_{\{m\}}(n)x.
$
It is easy to see that $\mathbb{P}_{\!\mathcal{A}}^s\colon\Z\to \mathcal{L}(\R^d)$ is a projector (see \cite[p. 255]{Aulbach1}),
and the next result states that both constructions \eqref{reP} and \eqref{projection-P2} coincide. Note that this implies by Example \ref{ED=H} that \eqref{autonom-A} also has an ED with respect to $\mathbb{P}_{\!\mathcal{A}}^s(n)$, $n\in\mathbb{Z}$.

\begin{prop}\label{prop-projconstant} If $\mathcal{A}\in H(d,\R)$, then
$\mathbb{P}_{\!\mathcal{A}}^s(n)=P_{\!\mathcal{A}}^s$, for all $n\in\Z$.
\end{prop}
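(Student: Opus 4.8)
The plan is to compute the operator $P_{S_r\circ\mathcal{N}_{\mathcal{A}}}$ explicitly, recognise it as ``apply the matrix spectral projection $P_{\!\mathcal{A}}$ at every lattice site'', and then read off the formula for $\mathbb{P}^s_{\!\mathcal{A}}(n)$ by evaluating on the sequences $\phi_{n,x}$. Set $L:=S_r\circ\mathcal{N}_{\mathcal{A}}\colon\ell^{\infty}(\mathbb{C}^d)\to\ell^{\infty}(\mathbb{C}^d)$, so that $(L\phi)(n)=\mathcal{A}\phi(n-1)$ for all $n\in\Z$. First I would use the splitting $\mathbb{C}^d=\im P_{\!\mathcal{A}}\oplus\ker P_{\!\mathcal{A}}$. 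Since $P_{\!\mathcal{A}}$ commutes with $\mathcal{A}$, both summands are $\mathcal{A}$-invariant, and (by the finite-dimensional case of Theorem~2.2 in \cite[pp.~10--11]{Gohberg}, cited above) $\mathcal{A}|_{\im P_{\!\mathcal{A}}}$ has spectral radius $<1$ while $\mathcal{A}|_{\ker P_{\!\mathcal{A}}}$ has all eigenvalues of modulus $>1$, in particular is invertible. Correspondingly $\ell^{\infty}(\mathbb{C}^d)$ decomposes, via the bounded idempotent $Q$ given by $(Q\phi)(n):=P_{\!\mathcal{A}}\phi(n)$, as the direct sum of the closed $L$-invariant subspaces $\ell^{\infty}(\im P_{\!\mathcal{A}})=\im Q$ and $\ell^{\infty}(\ker P_{\!\mathcal{A}})=\ker Q$ of sequences taking values in $\im P_{\!\mathcal{A}}$, resp. $\ker P_{\!\mathcal{A}}$.

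The key step is to localise the two pieces of $\sigma(L)$. Writing $L_s:=L|_{\im Q}$, one has $(L_s^{k}\phi)(n)=(\mathcal{A}|_{\im P_{\!\mathcal{A}}})^{k}\phi(n-k)$, hence $\|L_s^{k}\|=\|(\mathcal{A}|_{\im P_{\!\mathcal{A}}})^{k}\|$, and Gelfand's formula gives that the spectral radius of $L_s$ equals that of $\mathcal{A}|_{\im P_{\!\mathcal{A}}}$, which is $<1$; thus $\sigma(L_s)\subset\{|z|<1\}$. Similarly $L_u:=L|_{\ker Q}$ is invertible with $(L_u^{-1}\phi)(n)=(\mathcal{A}|_{\ker P_{\!\mathcal{A}}})^{-1}\phi(n+1)$, and the same argument shows the spectral radius of $L_u^{-1}$ equals that of $(\mathcal{A}|_{\ker P_{\!\mathcal{A}}})^{-1}$, which is $<1$, so $\sigma(L_u)\subset\{|z|>1\}$. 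Hence the unit circle separates $\sigma(L)=\sigma(L_s)\cup\sigma(L_u)$, and by uniqueness of the Riesz spectral decomposition (again Theorem~2.2 in \cite[pp.~10--11]{Gohberg}) the spectral projection $P_{S_r\circ\mathcal{N}_{\mathcal{A}}}=P_L$ is precisely the projection onto $\im Q$ along $\ker Q$. Since $Q$ itself is a bounded idempotent with those range and kernel which commutes with $L$ (because $P_{\!\mathcal{A}}$ commutes with $\mathcal{A}$), one concludes $P_{S_r\circ\mathcal{N}_{\mathcal{A}}}=Q$.

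It then only remains to insert $\phi_{n,x}$, whose only possibly nonzero entry is $\phi_{n,x}(n)=x$: by \eqref{projection-P2},
\[\mathbb{P}^s_{\!\mathcal{A}}(n)x=\bigl(P_{S_r\circ\mathcal{N}_{\mathcal{A}}}\phi_{n,x}\bigr)(n)=(Q\phi_{n,x})(n)=P_{\!\mathcal{A}}\bigl(\phi_{n,x}(n)\bigr)=P_{\!\mathcal{A}}x=P^s_{\!\mathcal{A}}x\]
for every $x\in\mathbb{R}^d$ (the value lying in $\mathbb{R}^d$ by Lemma~\ref{projection-image}) and every $n\in\mathbb{Z}$, which is the assertion.

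I expect the main obstacle to be the spectral localisation in the key step: one must handle the possible non-invertibility of $\mathcal{A}$ on $\im P_{\!\mathcal{A}}$, which is exactly why the argument must go through the spectral radius (Gelfand) rather than through inverting $L$, while on $\ker P_{\!\mathcal{A}}$ one uses that $\mathcal{A}$ is automatically invertible; and then the correct use of uniqueness of the spectral projection is needed to pass from ``range of $P_L$'' to the concrete pointwise formula $P_L=Q$. A more computational alternative would be to solve $(zI-L)\psi=\phi$ directly for $|z|=1$ via the geometric series $\psi_s(n)=\sum_{j\geqslant 0}z^{-j-1}(\mathcal{A}|_{\im P_{\!\mathcal{A}}})^{j}\phi_s(n-j)$ and $\psi_u(n)=-\sum_{j\geqslant 1}z^{j-1}(\mathcal{A}|_{\ker P_{\!\mathcal{A}}})^{-j}\phi_u(n+j)$ and integrate $R(z,L)$ term by term over $S^1$; this works as well but is messier, so I would prefer the decomposition argument above.
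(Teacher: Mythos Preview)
Your proof is correct and takes a genuinely different route from the paper. The paper does not compute $P_{S_r\circ\mathcal{N}_{\mathcal{A}}}$ at all; instead it argues as follows: by the results of Aulbach--Van~Minh (cited as Proposition~\ref{projection-A-B}), the projector $\mathbb{P}^s_{\!\mathcal{A}}(n)$ furnishes an ED on $\Z$ for the autonomous system~\eqref{autonom-A}; by Example~\ref{ED=H}, the constant projector $P^s_{\!\mathcal{A}}$ does as well; and then one invokes the general fact that the projector of an ED on all of $\Z$ is unique (references to Russ and P\"otzsche). So the paper's argument is short but rests on two external inputs---one of them the very Proposition~\ref{projection-A-B} whose proof is not given here---whereas your argument is self-contained: you identify $P_{S_r\circ\mathcal{N}_{\mathcal{A}}}$ directly as the pointwise projection $Q$ via the spectral splitting $\ell^\infty(\mathbb{C}^d)=\ell^\infty(\im P_{\!\mathcal{A}})\oplus\ell^\infty(\ker P_{\!\mathcal{A}})$ and the Gelfand radius computation, and then read off the result from the definition~\eqref{projection-P2}. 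Your approach has the advantage of avoiding the ED machinery entirely and making transparent \emph{why} the infinite-dimensional spectral projection is just the finite-dimensional one applied coordinatewise; the paper's approach has the advantage of showing how the statement fits into the broader uniqueness theory for dichotomy projectors that is used elsewhere in the article.
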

\begin{proof}
{It follows from \cite{Aulbach0,Aulbach1} that \eqref{autonom-A} has an ED with respect to $\mathbb{P}_{\!\mathcal{A}}^s(n)$, $n\in\mathbb{Z}$ (c.f. Proposition \ref{projection-A-B} below). Now, if \eqref{autonom-A} has an ED on $\Z$ with respect to two projections $P_1, P_2:\Z\rightarrow\mathcal{L}(\R^d)$, then $P_1$ and $P_2$ coincide (see \cite[Lem. 2.1 and Lem. 2.9]{Russ} or \cite[Rem. 3.4.17 and Rem. 3.4.18]{Christian10a})}.
\end{proof}
\noindent
Our next aim is to show that the projection from \eqref{projection-P2} for a single hyperbolic matrix $\mathcal{A}$ can be extended to the non-autonomous case. For this purpose, assume that the linear non-autonomous difference equation
\begin{equation*}\label{Nautonom-A}
\phi(n+1)=\mathbb{A}_n\phi(n), \quad n\in\Z,
\end{equation*}
admits a spectral dichotomy with respect to $\ell^{\infty}(\R^d)$, i.e., the spectrum $\sigma(S_r\circ \mathcal{N}_{\mathbb{A}})$ of $S_r\circ \mathcal{N}_{\mathbb{A}}$ does not intersect the unit circle in the complex plane, where $\mathcal{N}_{\mathbb{A}}\colon \ell^{\infty}(\R^d)\to \ell^{\infty}(\R^d)$  is defined by
$(\mathcal{N}_{\mathbb{A}}\phi)(n):=\mathbb{A}_n\phi(n)$ for all $\phi\in \ell^{\infty}(\R^d)$ and $n\in\Z$. Then $\sigma(S_r\circ\mathcal{N}_{\mathbb{A}})\cap S^1=\emptyset$ by Lemma \ref{AM-lemma}, which implies that the spectral projection
$P_{S_r\circ\mathcal{N}_{\mathbb{A}}}\colon \ell^{\infty}(\mathbb{C}^d)\to \ell^{\infty}(\mathbb{C}^d)$
\begin{align*}
P_{S_r\circ\mathcal{N}_{\mathbb{A}}}=\frac{1}{2\pi i}\oint_{S^1}R(z,S_r\circ\mathcal{N}_{\mathbb{A}})\, dz
\end{align*}
is defined. Moreover, $
P_{S_r\circ\mathcal{N}_{\mathbb{A}}}(\ell^{\infty}(\R^d))\subset \ell^{\infty}(\R^d)$,
which can be seen as in the proof Lemma \ref{projection-image}. The above projection $P_{S_r\circ\mathcal{N}_{\mathbb{A}}}$ allows us to define the family of operators
$P_{\!\mathbb{A}}^s\colon \Z\to \mathcal{L}(\R^d)$ by
\begin{equation}\label{operator}
P_{\mathbb{A}}^s(n)x:=(P_{S_r\circ\mathcal{N}_{\mathbb{A}}}\phi_{n,x})(n)
\end{equation}
for all $n\in\Z$ and $x\in\R^d$, where $\phi_{n,x}$ is defined as before. The following remarkable result shows that this projection is another example of an exponential dichotomy.
{\begin{prop}[\cite{Aulbach0,Aulbach1}]\label{projection-A-B}
Let $P_{\mathbb{A}}^s\colon \Z\to \mathcal{L}(\R^d)$ be
defined as in \eqref{operator}, where $\mathbb{A}\colon \Z\to \mathcal{L}(\R^d)$ is a discrete vector field admitting an ED on $\mathbb{Z}$. Then
\begin{itemize}
\item $P_{\mathbb{A}}^s$ is a regular projector commuting with $\mathbb{A}$, i.e.,
$\mathbb{A}_n\circ P_{\mathbb{A}}^s(n)=P_{\mathbb{A}}^s(n+1)\circ \mathbb{A}_n$, for all $n\in\Z$,
\item $\mathbb{A}$ admits an exponential dichotomy on $\Z$ with respect to the projector $P_{\mathbb{A}}^s$.
\end{itemize}
\end{prop}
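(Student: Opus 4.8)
The plan is as follows. An ED of $\mathbb{A}$ on $\Z$ comes, by Definition~\ref{ED-ED}, together with a regular invariant projector $\mathbb{P}\colon\Z\to\mathcal{L}(\R^d)$ and constants $K\geqslant 1$, $\alpha\in(0,1)$ satisfying \eqref{ED1}--\eqref{ED2}. I would first check that \eqref{operator} makes sense under this hypothesis, and then prove the identification $P^s_{\mathbb{A}}=\mathbb{P}$. Once this is established, all three assertions are immediate: $\mathbb{P}$ is a regular invariant projector, so it commutes with $\mathbb{A}$ and satisfies $\sup_n\|\mathbb{P}_n\|<\infty$ by Definition~\ref{ED-def}, and $\mathbb{A}$ admits an ED on $\Z$ with respect to it by construction.

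\emph{Step 1 (well-definedness of \eqref{operator}).} The dichotomy on $\Z$ implies, by the classical Green's operator construction (built from $\Phi(\cdot,\cdot)\mathbb{P}(\cdot)$ on the stable part and $-\Phi(\cdot,\cdot)(I_d-\mathbb{P}(\cdot))$ on the unstable part, where regularity of $\mathbb{P}$ is used to invert $\mathbb{A}_n$ along $\ker\mathbb{P}_n$ and \eqref{ED1}--\eqref{ED2} make the defining series converge; see e.g.\ \cite{Pal88}), that for every $g\in\ell^\infty(\R^d)$ the equation $\phi(n)-\mathbb{A}_{n-1}\phi(n-1)=g(n)$ has a unique solution $\phi\in\ell^\infty(\R^d)$, depending boundedly on $g$. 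Hence $\id-S_r\circ\mathcal{N}_{\mathbb{A}}$ is a Banach space isomorphism, i.e. $1\notin\sigma(S_r\circ\mathcal{N}_{\mathbb{A}})$; since $\sigma(S_r\circ\mathcal{N}_{\mathbb{A}})$ is rotationally symmetric by Lemma~\ref{AM-lemma}, this forces $\sigma(S_r\circ\mathcal{N}_{\mathbb{A}})\cap S^1=\emptyset$. So the Riesz projection $P:=P_{S_r\circ\mathcal{N}_{\mathbb{A}}}$ is defined, it preserves $\ell^\infty(\R^d)$ (the integrand $R(z,S_r\circ\mathcal{N}_{\mathbb{A}})$ over the symmetric contour $S^1$ sends real sequences to real sequences, exactly as in the proof of Lemma~\ref{projection-image}), and \eqref{operator} is meaningful. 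Moreover, writing $T:=S_r\circ\mathcal{N}_{\mathbb{A}}$, the cited Theorem~2.2 of \cite{Gohberg} gives $\sigma(T|_{\im P})\subset\{|z|<1\}$ and $\sigma(T|_{\ker P})\subset\{|z|>1\}$ with $T|_{\ker P}$ invertible; by compactness of these spectra and the spectral radius formula, $\|(T|_{\im P})^{k}\|$ and $\|(T|_{\ker P})^{-k}\|$ are $\leqslant C_1\alpha_1^{k}$ for suitable $C_1\geqslant1$, $\alpha_1\in(0,1)$.

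\emph{Step 2 (identification $P^s_{\mathbb{A}}=\mathbb{P}$).} I would work with the point masses $\phi_{n,x}$, using that $(T^k\psi)(m)=\Phi(m,m-k)\psi(m-k)$ and that $P$ commutes with $T$. (a) For $x\in\im\mathbb{P}_n$, let $\zeta(m):=\Phi(m,n)x$ for $m\geqslant n$ and $\zeta(m):=0$ for $m<n$; a direct computation gives $\phi_{n,x}=(\id-T)\zeta$, and \eqref{ED1} (with $\mathbb{P}_n x=x$) yields $\|T^k\zeta\|_\infty\leqslant K|x|$ for all $k\geqslant0$, hence $\zeta\in\im P$, since on $\ker P$ the quantities $\|T^k(\cdot)\|$ grow geometrically. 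Therefore $P\phi_{n,x}=(\id-T)P\zeta=(\id-T)\zeta=\phi_{n,x}$ and $P^s_{\mathbb{A}}(n)x=(P\phi_{n,x})(n)=x$. (b) For arbitrary $x$, put $y:=P^s_{\mathbb{A}}(n)x=(P\phi_{n,x})(n)$; then $\Phi(n+k,n)y=(T^kP\phi_{n,x})(n+k)$, whose norm is $\leqslant\|(T|_{\im P})^k\|\,\|P\phi_{n,x}\|_\infty\to0$, so $y\in\im\mathbb{P}_n$ by \eqref{ED2}. From (a) and (b), $P^s_{\mathbb{A}}(n)$ has range exactly $\im\mathbb{P}_n$ and restricts to the identity there, hence is a projector of rank $\dim\im\mathbb{P}_n$. (c) If $x\in\ker P^s_{\mathbb{A}}(n)$, then $\psi:=(\id-P)\phi_{n,x}\in\ker P$ with $\psi(n)=x$; applying $(T|_{\ker P})^{-j}$ and evaluating at $n$ writes $x=\Phi(n,n-j)v_j$ with $|v_j|\leqslant C_1\alpha_1^{j}\|\psi\|_\infty$, and splitting $v_j$ along $\mathbb{P}_{n-j}$ and using invariance of $\mathbb{P}$ together with \eqref{ED1} gives $|\mathbb{P}_n x|\leqslant K(\sup_m\|\mathbb{P}_m\|)C_1(\alpha\alpha_1)^{j}\|\psi\|_\infty\to0$, i.e. $x\in\ker\mathbb{P}_n$. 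Thus $\ker P^s_{\mathbb{A}}(n)\subset\ker\mathbb{P}_n$, and since these kernels have equal dimension $d-\dim\im\mathbb{P}_n$ they coincide; having the same range and kernel, $P^s_{\mathbb{A}}(n)=\mathbb{P}_n$ for every $n$.

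\emph{Main difficulty.} The delicate part is Step 2, and within it the analysis of $\ker P$: since the $\mathbb{A}_n$ are not assumed invertible, $T=S_r\circ\mathcal{N}_{\mathbb{A}}$ need not be injective, so $\im P$ and $\ker P$ are not literally the spaces of ``stable'' and ``unstable'' sequences and one cannot simply argue with two-sided bounded solutions. It is exactly here that one needs the dichotomy on all of $\Z$ (so that the Green's operator exists and $T|_{\ker P}$ is invertible) and the regularity of $\mathbb{P}$ (to run the backward pull-backs along $\ker\mathbb{P}$); the secondary technical point is that the Riesz projection preserves $\ell^\infty(\R^d)$, handled as in Lemma~\ref{projection-image}. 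All remaining computations, including the identity $\phi_{n,x}=(\id-T)\zeta$ and the dimension count, are routine.
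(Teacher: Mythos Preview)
Your proposal is correct. Note, however, that the paper does not actually give a proof of this proposition: it is quoted verbatim from \cite{Aulbach0,Aulbach1}, so there is no ``paper's own proof'' to compare against.

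That said, your argument is worth a remark. Rather than establishing abstractly that the Riesz projector $P^s_{\mathbb{A}}$ furnishes an ED (which is the content of the Aulbach--Van~Minh references), you take a given ED projector $\mathbb{P}$ and show directly that $P^s_{\mathbb{A}}(n)=\mathbb{P}(n)$ for every $n$. This has two pleasant consequences: it makes the proposition self-contained once the spectral dichotomy $\sigma(S_r\circ\mathcal{N}_{\mathbb{A}})\cap S^1=\emptyset$ is known, and it simultaneously \emph{reproves} the uniqueness of the dichotomy projector on $\Z$ that the paper invokes separately from \cite{Russ,Christian10a} in the proof of Proposition~\ref{prop-projconstant}. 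In contrast, the route via \cite{Aulbach0,Aulbach1} first builds the ED from the Riesz projection in $\ell^\infty$ and only then appeals to a separate uniqueness statement to identify it with any other dichotomy projector.

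The technical steps in your argument check out: the identity $\phi_{n,x}=(\id-T)\zeta$ is correct, the bound $\|T^k\zeta\|_\infty\leqslant K\alpha^k|x|$ (even better than the $K|x|$ you wrote) forces $\zeta\in\im P$, and in part (c) the invariance of $\mathbb{P}$ indeed gives $\mathbb{P}_n x=\Phi(n,n-j)\mathbb{P}_{n-j}v_j$, so that \eqref{ED1} and $|v_j|\leqslant C_1\alpha_1^j\|\psi\|_\infty$ yield $\mathbb{P}_n x=0$. One small stylistic point: in (b) you could argue more directly that $\Phi(n+k,n)(I-\mathbb{P}_n)y\to 0$ together with \eqref{ED2} forces $(I-\mathbb{P}_n)y=0$, which is exactly what you do but deserves to be said explicitly.
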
}


Assume that the invariant projector $\mathbb{P}\colon \mathbb{I}\to \mathcal{L}(\R^d)$ is regular, and $k\geqslant n$ with $k,n\in \mathbb{I}$. Since $\Phi(k,n)|\ker(\mathbb{P}(n))\colon \ker(\mathbb{P}(n))\to \ker(\mathbb{P}(k))$ is an isomorphism for all $k\geqslant n$ with $k,l\in \mathbb{I}$, we can define $\Phi(n,k)\colon \ker(\mathbb{P}(k))\to \ker(\mathbb{P}(n))$ as the inverse of $\Phi(k,n)|\ker(\mathbb{P}(n))$. Moreover, one can show that in this case \eqref{ED2} is equivalent to
\begin{equation}\label{ED3}
|\Phi(n,k)(I_d-\mathbb{P}(k))x|\leqslant K\alpha^{k-n}|(I_d-\mathbb{P}(k))x| \text{ for all }x\in\R^d.
\end{equation}
As $\im(I_d-\mathbb{P}(m))=\ker(\mathbb{P}(m))$, this allows to define \textit{Green's function}, which is the map $G_\Phi:\mathbb{I}\times\mathbb{I}\rightarrow\mathcal{L}(\mathbb{R}^d)$ given by
\begin{equation}\label{Green}
\mathbb{G}_{\Phi}(n,m)=\left\{
\begin{array}{ll}
\Phi(n,m)\mathbb{P}(m) & \emph{ if }\; m\leqslant n, \\
-\Phi(n,m)(I_d-\mathbb{P}(m)) & \emph{ if }\; n<m.
\end{array}
\right.
\end{equation}
We note for later reference the following result about \eqref{ED1}.
\begin{lemma}\label{ED-inverse}
Assume that the linear discrete vector field $\mathbb{A}\colon \Z\to \mathcal{L}(\R^d)$ has an invariant regular projector $\mathbb{P}\colon \mathbb{I}\to \mathcal{L}(\R^d)$. Then
\begin{align*}
|\Phi(k,n)\mathbb{P}(n)x|\leqslant K\alpha^{k-n}|\mathbb{P}(n)x| \Longleftrightarrow |||\Phi(k,n)^{-1}(\mathbb{P}(k)x)|||\geqslant (1/K)(1/\alpha)^{k-n}|\mathbb{P}(k)x|
\end{align*}
for all $k\geqslant n$ with $k,n\in \mathbb{I}$, where $|||\Phi(k,n)^{-1}y|||:=\inf\{|z|\mid z\in \Phi(k,n)^{-1}y\}$ and $K\geqslant 1$.
\end{lemma}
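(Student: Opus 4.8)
The plan is to prove the two implications separately; in both directions the crucial point is that the invariance and regularity of $\mathbb{P}$ completely determine how preimages under $\Phi(k,n)$ sit with respect to the splitting $\R^d=\im\mathbb{P}(m)\oplus\ker\mathbb{P}(m)$. Note first that one may assume $k>n$, since for $k=n$ we have $\Phi(n,n)=\id$ and both displayed inequalities hold trivially because $K\geqslant 1$. We shall use throughout the two commuting identities $\Phi(k,n)\mathbb{P}(n)=\mathbb{P}(k)\Phi(k,n)$ and $\Phi(k,n)(I_d-\mathbb{P}(n))=(I_d-\mathbb{P}(k))\Phi(k,n)$, which follow from the invariance diagram \eqref{diagram-p} by induction on $k-n$, together with the fact that, by regularity of $\mathbb{P}$, the map $\Phi(k,n)$ restricts to an isomorphism $\ker\mathbb{P}(n)\to\ker\mathbb{P}(k)$ for all $k\geqslant n$ in $\mathbb{I}$.

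For the implication $(\Rightarrow)$, fix $k>n$ in $\mathbb{I}$ and $x\in\R^d$; if $\Phi(k,n)^{-1}(\mathbb{P}(k)x)=\emptyset$ there is nothing to prove, so let $z$ be any vector with $\Phi(k,n)z=\mathbb{P}(k)x$. The first step is to show $z\in\im\mathbb{P}(n)$: writing $z=\mathbb{P}(n)z+(I_d-\mathbb{P}(n))z$ and applying the commuting identities, one gets $\Phi(k,n)\mathbb{P}(n)z\in\im\mathbb{P}(k)$ and $\Phi(k,n)(I_d-\mathbb{P}(n))z\in\ker\mathbb{P}(k)$, whereas $\Phi(k,n)z=\mathbb{P}(k)x\in\im\mathbb{P}(k)$; hitting the decomposition with $I_d-\mathbb{P}(k)$ forces $\Phi(k,n)(I_d-\mathbb{P}(n))z=0$, and then regularity yields $(I_d-\mathbb{P}(n))z=0$, i.e. $z=\mathbb{P}(n)z$. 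The second step is simply to feed this $z$ into the left-hand inequality (which holds for every argument): since $z=\mathbb{P}(n)z$ we obtain $|\mathbb{P}(k)x|=|\Phi(k,n)z|=|\Phi(k,n)\mathbb{P}(n)z|\leqslant K\alpha^{k-n}|\mathbb{P}(n)z|=K\alpha^{k-n}|z|$, hence $|z|\geqslant(1/K)(1/\alpha)^{k-n}|\mathbb{P}(k)x|$; taking the infimum over all such preimages $z$ gives the right-hand estimate.

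For the implication $(\Leftarrow)$, fix $k>n$ and $x\in\R^d$ and put $y:=\Phi(k,n)\mathbb{P}(n)x$. By the commuting identity, $y=\mathbb{P}(k)\Phi(k,n)\mathbb{P}(n)x$, so $\mathbb{P}(k)y=y$, and the right-hand hypothesis applied with $y$ in place of $x$ reads $|||\Phi(k,n)^{-1}(y)|||\geqslant(1/K)(1/\alpha)^{k-n}|y|$. On the other hand $\mathbb{P}(n)x$ is itself a preimage of $y$ under $\Phi(k,n)$, so $|||\Phi(k,n)^{-1}(y)|||\leqslant|\mathbb{P}(n)x|$. Combining the two bounds gives $|y|\leqslant K\alpha^{k-n}|\mathbb{P}(n)x|$, which is precisely $|\Phi(k,n)\mathbb{P}(n)x|\leqslant K\alpha^{k-n}|\mathbb{P}(n)x|$.

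I do not expect any serious obstacle here; the one subtlety worth flagging is the first step of the implication $(\Rightarrow)$, namely that every preimage of a vector in $\im\mathbb{P}(k)$ already lies in $\im\mathbb{P}(n)$. This is exactly where regularity of $\mathbb{P}$ is used, and it is also what lets the argument avoid the inequality $|\mathbb{P}(n)z|\leqslant|z|$, which is false in general for non-orthogonal projectors: on $\im\mathbb{P}(n)$ one has the equality $|\mathbb{P}(n)z|=|z|$, and that is all that is needed.
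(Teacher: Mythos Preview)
Your proof is correct and follows essentially the same approach as the paper's own argument: for $(\Rightarrow)$ you show that any preimage $z$ of $\mathbb{P}(k)x$ lies in $\im\mathbb{P}(n)$ (using regularity) and then apply the forward estimate to $z=\mathbb{P}(n)z$, while for $(\Leftarrow)$ you note that $\mathbb{P}(n)x$ is itself a preimage of $\Phi(k,n)\mathbb{P}(n)x=\mathbb{P}(k)\Phi(k,n)x$ and bound the infimum from above. Your write-up is in fact slightly more careful than the paper's (you handle $k=n$ and the empty-preimage case explicitly, and you spell out the regularity argument for $z\in\im\mathbb{P}(n)$ in more detail).
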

\begin{proof}
For showing the implication from left to right, we take $x\in \R^d$ such that $\mathbb{P}(k)x\neq 0$ and $z\in \Phi(k,n)^{-1}(\mathbb{P}(k)x)$. Then $\Phi(k,n)z=\mathbb{P}(k)x$ and $z\in \im(\mathbb{P}(n))$, where the latter holds as $\Phi(k,n)(\im(\mathbb{P}(n)))\subset \im(\mathbb{P}(k))$ and $\Phi(k,n)|\ker(\mathbb{P}(n)))\colon \ker(\mathbb{P}(n)))\to \ker(\mathbb{P}(k))$ is an isomorphism. Thus
\begin{align*}
1=|\mathbb{P}(k)x|^{-1}|\mathbb{P}(k)x|=|\mathbb{P}(k)x|^{-1}|\Phi(k,n)z|\leqslant |\mathbb{P}(k)x|^{-1}K\alpha^{k-n}|z|
\end{align*}
and hence $
|z|\geqslant (1/K)(1/\alpha)^{k-n}|\mathbb{P}(k)x|$,
which implies that
\begin{align*}
|||\Phi(k,n)^{-1}(\mathbb{P}(k)x)|||=\inf\{|z|\mid z\in \Phi(k,n)^{-1}(\mathbb{P}(k)x)\} \geqslant (1/K)(1/\alpha)^{k-n}|\mathbb{P}(k)x|.
\end{align*}
To show the implication from right to left, we note that since $\mathbb{P}(n)x\in \Phi(k,n)^{-1}(\Phi(k,n)\mathbb{P}(n)x)$ and $\Phi(k,n)\mathbb{P}(n)x=\mathbb{P}(k)\Phi(k,n)x$, it follows that
\begin{align*}
|\mathbb{P}(n)x|\geqslant |||\Phi(k,n)^{-1}(\Phi(k,n)\mathbb{P}(n)x)|||\geqslant (1/K)(1/\alpha)^{k-n}|\Phi(k,n)\mathbb{P}(n)x)|.
\end{align*}
Hence
$
K\alpha^{k-n}|\mathbb{P}(n)x|\geqslant |\Phi(k,n)\mathbb{P}(n)x|$,
which completes the proof.
\end{proof}
\noindent
Next we introduce the dichotomy spectrum of $\mathbb{A}\colon \Z\to \mathcal{L}(\R^d)$ which is strictly related to an ED. We set
\begin{itemize}
\item $\Sigma(\mathbb{A}):=\{\gamma>0\mid \gamma^{-1}\mathbb{A} \text{ does not have an ED on }\Z\}$,
\item $\Sigma_{\kappa}^{\pm}(\mathbb{A}):=\{\gamma>0\mid \gamma^{-1}\mathbb{A}\text{ does not have an ED on }\Z_{\kappa}^{\pm}\}$,
\end{itemize}
where $\gamma^{-1}\mathbb{A}\colon \to \mathcal{L}(\R^d)$ is given by $(\gamma^{-1}\mathbb{A})(n)=\gamma^{-1}\cdot \mathbb{A}(n)$ for $n\in\Z$.
The following result from \cite{Aulbach1, Poetzschec} summarizes the main properties of $\Sigma(\mathbb{A})$ and shows that it is closely related to $\sigma(S_r\circ \mathcal{N}_{\mathbb{A}})$.
\begin{theorem}\label{ED-ES}
\begin{enumerate}
\item[$(1)$] $\mathbb{A}$ has an ED on $\Z \Longleftrightarrow1\not\in \Sigma(\mathbb{A})$.
\item[$(2)$] $\Sigma(\mathbb{A})=\sigma(S_r\circ \mathcal{N}_{\mathbb{A}})\cap (0,\infty)$, and hence $\sigma(S_r\circ \mathcal{N}_{\mathbb{A}})\cap S^1=\emptyset$ if and only if $1\not\in\Sigma(\mathbb{A})$ (cf. Lemma \ref{AM-lemma}).
\item[$(3)$] If $1\not\in \Sigma(\mathbb{A})$, then there exists $\gamma_{\mathbb{A}}>0$ such that $1\not\in \Sigma(\mathbb{A}+\mathbb{B})$ for any $\mathbb{B}\colon\Z\to \mathcal{L}(\R^d)$ with $\|\mathbb{B}(n)\|\leqslant \gamma_{\mathbb{A}}$, for all $n\in\Z$.
\item[$(4)$] If $\mathbb{A}$ is autonomous, i.e., there exists $\mathcal{A}\in M(d,\R)$ such that $\mathbb{A}(n)=\mathcal{A}$ for all $n\in\Z$, then
\begin{align*}
\Sigma(\mathcal{A}):=\Sigma(\mathbb{A})=\{|\lambda|\mid \lambda\in\sigma(\mathcal{A})\}\setminus \{0\}.
\end{align*}
\end{enumerate}
\end{theorem}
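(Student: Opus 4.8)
The plan is to deduce all four assertions from a single bridge between the \emph{dichotomy} spectrum $\Sigma(\mathbb{A})$ and the \emph{ordinary} spectrum of the operator $S_r\circ\mathcal{N}_{\mathbb{A}}$; the analytic substance of that bridge is the discrete Perron-type characterization of exponential dichotomies on the line, which I take from \cite{Aulbach1,Poetzschec}. Assertion $(1)$ is immediate: since $1^{-1}\mathbb{A}=\mathbb{A}$, the definition of $\Sigma(\mathbb{A})$ says exactly that $1\in\Sigma(\mathbb{A})$ precisely when $\mathbb{A}$ has no ED on $\Z$. For $(2)$ I would first observe that, for $\gamma>0$, the equation $(\gamma I-S_r\circ\mathcal{N}_{\mathbb{A}})\phi=\psi$ in $\ell^{\infty}(\mathbb{C}^d)$ is the inhomogeneous recursion $\phi(n)-\gamma^{-1}\mathbb{A}_{n-1}\phi(n-1)=\gamma^{-1}\psi(n)$, whose homogeneous part is precisely the system $\gamma^{-1}\mathbb{A}$ (with bounded coefficients, by $(L0)$). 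If $\gamma^{-1}\mathbb{A}$ admits an ED on $\Z$, then the exponential decay of Green's function \eqref{Green} yields the unique bounded solution for every $\psi$, so $\gamma I-S_r\circ\mathcal{N}_{\mathbb{A}}$ is invertible; the converse is the cited Perron theorem. Thus $\gamma\in\Sigma(\mathbb{A})$ holds if and only if $\gamma>0$ and $\gamma\in\sigma(S_r\circ\mathcal{N}_{\mathbb{A}})$, i.e.\ $\Sigma(\mathbb{A})=\sigma(S_r\circ\mathcal{N}_{\mathbb{A}})\cap(0,\infty)$. The ``and hence'' clause follows from Lemma \ref{AM-lemma}: rotational symmetry of $\sigma(S_r\circ\mathcal{N}_{\mathbb{A}})$ forces its intersection with $S^1$ to be either empty or all of $S^1$, so $1\notin\sigma(S_r\circ\mathcal{N}_{\mathbb{A}})$ already forces $\sigma(S_r\circ\mathcal{N}_{\mathbb{A}})\cap S^1=\emptyset$; combining this with $(1)$ and the identity just proved gives the stated equivalences.

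For $(3)$, by $(2)$ the hypothesis $1\notin\Sigma(\mathbb{A})$ means that $I-S_r\circ\mathcal{N}_{\mathbb{A}}$ is invertible in $\mathcal{L}(\ell^{\infty}(\mathbb{C}^d))$; set $C:=\|(I-S_r\circ\mathcal{N}_{\mathbb{A}})^{-1}\|$. Since $\|S_r\circ\mathcal{N}_{\mathbb{B}}\|=\|\mathcal{N}_{\mathbb{B}}\|=\sup_{n\in\Z}\|\mathbb{B}(n)\|$, choosing $\gamma_{\mathbb{A}}<C^{-1}$ makes $I-S_r\circ\mathcal{N}_{\mathbb{A}+\mathbb{B}}=(I-S_r\circ\mathcal{N}_{\mathbb{A}})-S_r\circ\mathcal{N}_{\mathbb{B}}$ a perturbation of an invertible operator by an operator of norm $<C^{-1}$, hence invertible; so $1\notin\sigma(S_r\circ\mathcal{N}_{\mathbb{A}+\mathbb{B}})$ and, by $(2)$ again, $1\notin\Sigma(\mathbb{A}+\mathbb{B})$. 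For $(4)$, if $\mathbb{A}(n)\equiv\mathcal{A}$ then $\gamma^{-1}\mathbb{A}$ is autonomous with constant matrix $\gamma^{-1}\mathcal{A}$, which by Example \ref{ED=H} admits an ED on $\Z$ exactly when $\gamma^{-1}\mathcal{A}$ is hyperbolic, i.e.\ $\gamma^{-1}\sigma(\mathcal{A})\cap S^1=\emptyset$, i.e.\ $\gamma\neq|\lambda|$ for every $\lambda\in\sigma(\mathcal{A})$; intersecting with $\gamma>0$ gives $\Sigma(\mathbb{A})=\{|\lambda|\mid\lambda\in\sigma(\mathcal{A})\}\setminus\{0\}$.

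The step I expect to be the main obstacle is the converse half of the bridge used in $(2)$, namely that bounded invertibility of $\gamma I-S_r\circ\mathcal{N}_{\mathbb{A}}$ forces $\gamma^{-1}\mathbb{A}$ to admit an ED on $\Z$ \emph{with a regular invariant projector}. This is the genuinely analytic input and is exactly where \cite{Aulbach1,Poetzschec} enter; everything else above is either a direct computation or a formal consequence of Lemma \ref{AM-lemma} and Example \ref{ED=H}.
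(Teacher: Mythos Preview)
The paper does not supply its own proof of this theorem at all: it is stated with the preamble ``The following result from \cite{Aulbach1, Poetzschec} summarizes the main properties of $\Sigma(\mathbb{A})$\ldots'' and left without argument. Your sketch therefore goes further than the paper does, and it is essentially correct; the organizing idea (reduce everything to the Perron-type equivalence between ED on $\Z$ and bounded invertibility of $I-S_r\circ\mathcal{N}_{\gamma^{-1}\mathbb{A}}$, then read off $(1)$--$(4)$) is exactly the intended route in \cite{Aulbach1, Poetzschec}.

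One small point to tighten: in your proof of $(4)$ you invoke Example~\ref{ED=H} for the biconditional ``$\gamma^{-1}\mathcal{A}$ has an ED on $\Z$ \emph{exactly when} $\gamma^{-1}\mathcal{A}$ is hyperbolic''. As written, Example~\ref{ED=H} only establishes the forward implication (hyperbolic $\Rightarrow$ ED). The converse is true and elementary---if $|\mu|=1$ were an eigenvalue of $\gamma^{-1}\mathcal{A}$ with eigenvector $v$, then $\phi(n)=\mu^n v$ would be a bounded nonzero solution of the homogeneous equation, forcing $I-S_r\circ\mathcal{N}_{\gamma^{-1}\mathbb{A}}$ to have nontrivial kernel and hence, via your part $(2)$, contradicting the ED---but you should say so rather than attribute both directions to Example~\ref{ED=H}. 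With that adjustment your argument is complete, and your self-identified obstacle (the converse Perron direction in $(2)$) is indeed precisely the content borrowed from \cite{Aulbach1, Poetzschec}.
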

Let us note that the projection yielding an ED in $(2)$ is the one from Proposition {\ref{projection-A-B}}.\\
The previous theorem suggests the following definitions. Let $\mathbb{A}\colon \Lambda\times\Z\to \mathcal{L}(\R^d)$ be a linear parametrized discrete vector field. We will say that
\begin{enumerate}
\item[(a)] $\mathbb{A}$ is hyperbolic provided $1\not\in\Sigma(\mathbb{A}(\lambda))$ for all $\lambda\in\Lambda$,
\item[(b)] given $\gamma>0$, $\mathbb{B}\colon \Lambda\times\Z\to \mathcal{L}(\R^d)$ is $\gamma$--small if
$
\|\mathbb{B}(\lambda,n)\|\leqslant \gamma$ for all $n\in\Z$ and $\lambda\in\Lambda$.
\item[(c)] given $\gamma_{\pm}>0$, $\mathbb{B}_{\pm}\colon \Lambda\times\Z\to \mathcal{L}(\R^d)$ is $\gamma_{\pm}$--small at $\pm\infty$ if there exists $\kappa_{\pm}\in \Z^{\pm}$ such that
\begin{align*}
\|\mathbb{B}_{\pm}(\lambda,n)\|\leqslant \gamma_{\pm} \text{ for all }n\in \Z^{\pm}_{\kappa_{\pm}}\text{ and }\lambda\in\Lambda.
\end{align*}
\end{enumerate}
Let us recall from Example \ref{ED=H} that \eqref{autonom-A} has an ED which is given by the projection on the stable space of $\mathcal{A}$. Note that if $\mathcal{A}\colon \Lambda \to H(d,\R)$ is a continuous family of hyperbolic matrices, then the corresponding family of projections \eqref{reP} is continuous as well. The following perturbation theorem of P\"otzsche can be found in \cite{Poetzsched}.

\begin{prop}\label{family-hyperbolic}
For any continuous family of hyperbolic matrices $\mathcal{A}\colon \Lambda \to H(d,\R)$ there exists $\gamma_{\mathcal{A}}>0$ such that
if $\mathbb{B}\colon \Lambda\times\Z\to \mathcal{L}(\R^d)$ is $\gamma_{\mathcal{A}}$--small, then there is a parametrized projector $\mathbb{P}\colon \Lambda\times\Z \to \mathcal{L}(\R^d)$ such that
\begin{itemize}
\item $\mathbb{P}(\lambda)\colon \Z\to \mathcal{L}(\R^d)$ is regular and commutes with $\mathcal{A}(\lambda)+\mathbb{B}(\lambda)$ for all $\lambda\in\Lambda$,
\item $\mathcal{A}(\lambda)+\mathbb{B}(\lambda)\colon\Z\to \mathcal{L}(\R^d)$ admits an ED on $\Z$ with respect to $\mathbb{P}(\lambda)$  for all $\lambda\in\Lambda$.
\end{itemize}
\end{prop}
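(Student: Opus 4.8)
The plan is to reduce the parametrized statement to the autonomous perturbation theory recalled in this section and to read off continuity in $\lambda$ from the resolvent formula for the projector. For a fixed $\lambda\in\Lambda$ write $\mathbb{A}^{\lambda}\colon\Z\to\mathcal{L}(\R^d)$ for the discrete vector field $\mathbb{A}^{\lambda}_n:=\mathcal{A}(\lambda)+\mathbb{B}(\lambda,n)$, and $\mathbb{B}(\lambda)$ for the vector field $n\mapsto\mathbb{B}(\lambda,n)$. Since $\mathbb{A}\mapsto\mathcal{N}_{\mathbb{A}}$ is linear and $S_r$ is an isometry of $\ell^{\infty}(\C^d)$, one has $S_r\circ\mathcal{N}_{\mathbb{A}^{\lambda}}=(S_r\circ\mathcal{N}_{\mathcal{A}(\lambda)})+(S_r\circ\mathcal{N}_{\mathbb{B}(\lambda)})$ with $\|S_r\circ\mathcal{N}_{\mathbb{B}(\lambda)}\|\leqslant\sup_{n\in\Z}\|\mathbb{B}(\lambda,n)\|\leqslant\gamma$ whenever $\mathbb{B}$ is $\gamma$--small. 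So it suffices to produce $\gamma_{\mathcal{A}}>0$ such that $\gamma_{\mathcal{A}}$--smallness forces, for every $\lambda$, that $\sigma(S_r\circ\mathcal{N}_{\mathbb{A}^{\lambda}})\cap S^1=\emptyset$ (which by Theorem~\ref{ED-ES}(1),(2) yields an ED of $\mathbb{A}^{\lambda}$ on $\Z$), and then to check that the associated projectors from Proposition~\ref{projection-A-B} depend continuously on $\lambda$.

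First I would establish the uniform spectral gap. The set $\mathcal{A}(\Lambda)\subset H(d,\R)$ is compact, and $\lambda\mapsto T_{\lambda}:=S_r\circ\mathcal{N}_{\mathcal{A}(\lambda)}$ is continuous as a map $\Lambda\to\mathcal{L}(\ell^{\infty}(\C^d))$. By Theorem~\ref{ED-ES}(4) each hyperbolic matrix $\mathcal{A}(\lambda)$ satisfies $1\notin\Sigma(\mathcal{A}(\lambda))$, so $\sigma(T_{\lambda})\cap S^1=\emptyset$ by Theorem~\ref{ED-ES}(2). Hence $(\lambda,z)\mapsto R(z,T_{\lambda})$ is defined and continuous on the compact set $\Lambda\times S^1$, and therefore bounded, say $\|R(z,T_{\lambda})\|\leqslant M$ there. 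Put $\gamma_{\mathcal{A}}:=\tfrac{1}{2M}$. If $\mathbb{B}$ is $\gamma_{\mathcal{A}}$--small, then for every $z\in S^1$ and $\lambda\in\Lambda$ the factorization $zI-S_r\circ\mathcal{N}_{\mathbb{A}^{\lambda}}=(zI-T_{\lambda})\bigl(I-R(z,T_{\lambda})(S_r\circ\mathcal{N}_{\mathbb{B}(\lambda)})\bigr)$ together with $\|R(z,T_{\lambda})(S_r\circ\mathcal{N}_{\mathbb{B}(\lambda)})\|\leqslant M\gamma_{\mathcal{A}}=\tfrac12<1$ shows, by a Neumann series, that $z\notin\sigma(S_r\circ\mathcal{N}_{\mathbb{A}^{\lambda}})$. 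Thus $\sigma(S_r\circ\mathcal{N}_{\mathbb{A}^{\lambda}})\cap S^1=\emptyset$, i.e.\ $1\notin\Sigma(\mathbb{A}^{\lambda})$ by Theorem~\ref{ED-ES}(2), and $\mathbb{A}^{\lambda}$ admits an ED on $\Z$ by Theorem~\ref{ED-ES}(1).

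Next I would define the projector and verify its continuity. Fixing $\lambda$, Proposition~\ref{projection-A-B} applied to $\mathbb{A}^{\lambda}$ provides the regular projector $P^{s}_{\mathbb{A}^{\lambda}}\colon\Z\to\mathcal{L}(\R^d)$, $P^{s}_{\mathbb{A}^{\lambda}}(n)x=(P_{S_r\circ\mathcal{N}_{\mathbb{A}^{\lambda}}}\phi_{n,x})(n)$, which commutes with $\mathbb{A}^{\lambda}$ and with respect to which $\mathbb{A}^{\lambda}$ has an ED on $\Z$; its range lies in $\ell^{\infty}(\R^d)$, so that indeed $P^{s}_{\mathbb{A}^{\lambda}}(n)\in\mathcal{L}(\R^d)$, exactly as in the discussion preceding Proposition~\ref{projection-A-B} and in the proof of Lemma~\ref{projection-image}. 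Setting $\mathbb{P}(\lambda,n):=P^{s}_{\mathbb{A}^{\lambda}}(n)$ then gives all the pointwise-in-$\lambda$ assertions, and it remains to show that each $\mathbb{P}_n\colon\Lambda\to\mathcal{L}(\R^d)$ is continuous. The Neumann-series estimate above also bounds $\|R(z,S_r\circ\mathcal{N}_{\mathbb{A}^{\lambda}})\|$ uniformly for $(\lambda,z)\in\Lambda\times S^1$; combined with the resolvent identity and the norm-continuity of $\lambda\mapsto S_r\circ\mathcal{N}_{\mathbb{A}^{\lambda}}$ (using the continuity of $\mathcal{A}$ and of $\mathbb{B}$ in $\lambda$, uniformly in $n$), this shows $\lambda\mapsto R(z,S_r\circ\mathcal{N}_{\mathbb{A}^{\lambda}})$ is continuous uniformly in $z\in S^1$. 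Hence $\lambda\mapsto P_{S_r\circ\mathcal{N}_{\mathbb{A}^{\lambda}}}=\frac{1}{2\pi i}\oint_{S^1}R(z,S_r\circ\mathcal{N}_{\mathbb{A}^{\lambda}})\,dz$ is norm-continuous, and evaluating at $\phi_{n,x}$ and reading off the $n$-th coordinate gives the continuity of $\lambda\mapsto\mathbb{P}_n(\lambda)$. Thus $\mathbb{P}$ is a parametrized projector with all the required properties.

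The step I expect to be the real obstacle is the uniformity in the spectral gap: Theorem~\ref{ED-ES}(3) by itself only yields a perturbation radius $\gamma_{\mathbb{A}(\lambda)}$ for each individual $\lambda$, and turning these into a single $\gamma_{\mathcal{A}}$ is precisely what forces the passage through the operators $S_r\circ\mathcal{N}_{\mathcal{A}(\lambda)}$ and the uniform resolvent bound on $\Lambda\times S^1$, which in turn rests on upper semicontinuity of the spectrum together with compactness of $\Lambda$. Once that bound is available, everything else is bookkeeping on top of the machinery of this section. A minor point that needs attention is the continuity of $\mathbb{B}$ in the parameter (uniformly in $n$), used in the last step so that $\mathbb{P}$ is a parametrized --- and not merely a fibrewise --- projector; this is automatic under the $(L0)$-type hypotheses on the vector fields arising in the applications.
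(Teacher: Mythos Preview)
The paper does not actually prove this proposition; it attributes it to P\"otzsche \cite{Poetzsched} and uses it as a black box. Your proposal, by contrast, supplies a proof that is internal to the spectral--dichotomy machinery already assembled in the paper (Theorem~\ref{ED-ES}, Proposition~\ref{projection-A-B}, and the Riesz--projection formula~\eqref{operator}). In that sense your route is genuinely different from the paper's, which simply outsources the result.

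Your argument is correct. The uniform resolvent bound on $\Lambda\times S^1$ is exactly the right way to upgrade the pointwise perturbation radius of Theorem~\ref{ED-ES}(3) to a single $\gamma_{\mathcal{A}}$, and you identify this as the crux. The construction of $\mathbb{P}$ via \eqref{operator} and Proposition~\ref{projection-A-B}, together with continuity of the Riesz integral, then gives both the ED for each $\lambda$ and the continuity of $\mathbb{P}_n$ in $\lambda$. Your flag about needing equicontinuity of $\mathbb{B}$ in $\lambda$ (uniformly in $n$) is on point: as stated, ``$\gamma_{\mathcal{A}}$--small'' does not carry any continuity information, yet the conclusion demands a \emph{parametrized} projector; this is a hypothesis that P\"otzsche's formulation includes and that is automatically satisfied in the paper's only application (the $\mathbb{B}_+$ built in the proof of Theorem~\ref{perturbed-ED} inherits $(L0)$ from $\mathbb{A}$). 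So the gap is in the proposition's stated hypotheses rather than in your reasoning.
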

\noindent
As final result of this section, we use P\"otzsche's theorem to show that an ED extends the notion of an asymptotically hyperbolic system. A consequence of this observation will be that our index theorem in Section \ref{section-index} as well as our bifurcation theorems in Section \ref{section-bifurcation} generalize the corresponding results that we previously obtained in \cite{SkibaIch}.

\begin{theorem}\label{perturbed-ED}
Let $\mathbb{A}\colon \Lambda\times\Z\to \mathcal{L}(\R^d)$ be asymptotically hyperbolic. Then there exists $\gamma_{\pm}>0$ such that if $\mathbb{D}\colon \Lambda\times\Z\to \mathcal{L}(\R^d)$ is $\gamma_{\pm}$--small at $\pm\infty$, then for the perturbed system $\mathbb{A}+\mathbb{D}$ there is a parametrized projector $\mathbb{P}_{\pm}\colon \Lambda\times\Z^{\pm}_{\kappa_{\pm}} \to \mathcal{L}(\R^d)$ for some $\kappa_{\pm}\in\Z$ such that
\begin{itemize}
\item $\mathbb{P}_{\pm}(\lambda)\colon \Z^{\pm}_{\kappa_{\pm}}\to \mathcal{L}(\R^d)$ is regular and commutes with $\mathbb{A}(\lambda)+\mathbb{D}(\lambda)$ for all $\lambda\in\Lambda$,
\item $\mathbb{A}(\lambda)+\mathbb{D}(\lambda)\colon\Z^{\pm}_{\kappa_{\pm}}\to \mathcal{L}(\R^d)$ admits an ED on $\Z_{\kappa_{\pm}}^{\pm}$ with respect to $\mathbb{P}_{\pm}(\lambda)$ for all $\lambda\in\Lambda$.
\end{itemize}
\end{theorem}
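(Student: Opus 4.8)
The plan is to reduce the statement to P\"otzsche's perturbation theorem for continuous families of hyperbolic matrices (Proposition \ref{family-hyperbolic}). The key observation is that, far out on a half-line, the perturbed tail $\mathbb{A}+\mathbb{D}$ differs from the \emph{constant} family $\mathcal{A}(\cdot,+\infty)$ only by a uniformly small amount; extending it by that constant family to all of $\Z$ produces a globally small perturbation of $\mathcal{A}(\cdot,+\infty)$, to which Proposition \ref{family-hyperbolic} applies, and one then restricts the resulting exponential dichotomy back to the half-line. The construction at $-\infty$ is word-for-word the same (with $\mathcal{A}(\cdot,-\infty)$ in place of $\mathcal{A}(\cdot,+\infty)$), so I would only carry it out at $+\infty$.

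First I would fix the constants. Since $(L0)$ together with asymptotic hyperbolicity makes $\mathcal{A}(\cdot,+\infty)\colon\Lambda\to H(d,\R)$ a continuous family of hyperbolic matrices, Proposition \ref{family-hyperbolic} supplies a threshold $\gamma_{\mathcal{A}(\cdot,+\infty)}>0$, and I set $\gamma_+:=\tfrac12\gamma_{\mathcal{A}(\cdot,+\infty)}$. By the uniform convergence $\mathbb{A}_n(\lambda)\to\mathcal{A}(\lambda,+\infty)$ there is $\kappa^{(1)}$ with $\|\mathbb{A}_n(\lambda)-\mathcal{A}(\lambda,+\infty)\|<\gamma_+$ for all $n\geqslant\kappa^{(1)}$ and $\lambda\in\Lambda$, and since $\mathbb{D}$ is $\gamma_+$-small at $+\infty$ there is $\kappa^{(2)}$ with $\|\mathbb{D}_n(\lambda)\|\leqslant\gamma_+$ for $n\geqslant\kappa^{(2)}$ and $\lambda\in\Lambda$; I put $\kappa_+:=\max\{\kappa^{(1)},\kappa^{(2)}\}$. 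Then I would build the extension $\widetilde{\mathbb{A}}\colon\Lambda\times\Z\to\mathcal{L}(\R^d)$, $\widetilde{\mathbb{A}}_n(\lambda)=\mathbb{A}_n(\lambda)+\mathbb{D}_n(\lambda)$ for $n\geqslant\kappa_+$ and $\widetilde{\mathbb{A}}_n(\lambda)=\mathcal{A}(\lambda,+\infty)$ for $n<\kappa_+$. Then $\widetilde{\mathbb{A}}=\mathcal{A}(\cdot,+\infty)+\mathbb{B}$ with $\mathbb{B}_n=0$ for $n<\kappa_+$ and $\mathbb{B}_n=(\mathbb{A}_n-\mathcal{A}(\cdot,+\infty))+\mathbb{D}_n$ for $n\geqslant\kappa_+$, so $\|\mathbb{B}_n(\lambda)\|<\gamma_{\mathcal{A}(\cdot,+\infty)}$ for all $n,\lambda$; thus $\mathbb{B}$ is $\gamma_{\mathcal{A}(\cdot,+\infty)}$-small, and each $\widetilde{\mathbb{A}}_n$ is continuous in $\lambda$ (using continuity in $\lambda$ of $\mathbb{A}_n$, $\mathbb{D}_n$ and $\mathcal{A}(\cdot,+\infty)$). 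Proposition \ref{family-hyperbolic} now yields a parametrized projector $\widetilde{\mathbb{P}}\colon\Lambda\times\Z\to\mathcal{L}(\R^d)$ which is regular and commutes with $\widetilde{\mathbb{A}}(\lambda)$, and such that $\widetilde{\mathbb{A}}(\lambda)$ has an ED on $\Z$ with respect to $\widetilde{\mathbb{P}}(\lambda)$, with constants $K\geqslant1$, $\alpha\in(0,1)$ independent of $\lambda$.

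The final step is the restriction. I put $\mathbb{P}_+:=\widetilde{\mathbb{P}}|_{\Lambda\times\Z^+_{\kappa_+}}$. For $n\geqslant\kappa_+$ one has $\widetilde{\mathbb{A}}_n=\mathbb{A}_n+\mathbb{D}_n$, so the identity $\widetilde{\mathbb{A}}_n\widetilde{\mathbb{P}}_n=\widetilde{\mathbb{P}}_{n+1}\widetilde{\mathbb{A}}_n$ and the fact that $\widetilde{\mathbb{A}}_n|\ker\widetilde{\mathbb{P}}_n\colon\ker\widetilde{\mathbb{P}}_n\to\ker\widetilde{\mathbb{P}}_{n+1}$ is an isomorphism descend immediately to the assertions that $\mathbb{P}_+(\lambda)$ is invariant and regular with respect to $\mathbb{A}(\lambda)+\mathbb{D}(\lambda)$ on $\Z^+_{\kappa_+}$. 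For $k\geqslant n$ with $k,n\in\Z^+_{\kappa_+}$ every index between $n$ and $k$ lies in $\Z^+_{\kappa_+}$, so $\Phi_{\widetilde{\mathbb{A}}}(k,n)=\Phi_{\mathbb{A}+\mathbb{D}}(k,n)$, and therefore the estimates \eqref{ED1} and \eqref{ED2} for $\widetilde{\mathbb{A}}$ restrict verbatim — with the same $K$, $\alpha$ — to estimates for $\mathbb{A}+\mathbb{D}$ on $\Z^+_{\kappa_+}$. Hence $\mathbb{A}(\lambda)+\mathbb{D}(\lambda)$ has an ED on $\Z^+_{\kappa_+}$ with respect to $\mathbb{P}_+(\lambda)$ for every $\lambda$, which is the claim at $+\infty$.

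I do not expect a serious obstacle: the real content is Proposition \ref{family-hyperbolic}, which is available, and the rest is the bookkeeping above. The one point that needs genuine care is the last step — verifying that a \emph{global} exponential dichotomy of $\widetilde{\mathbb{A}}$ really does restrict to a \emph{half-line} exponential dichotomy of $\mathbb{A}+\mathbb{D}$ in the precise sense of Definition \ref{ED-ED}, i.e.\ that invariance, regularity and \emph{both} exponential estimates survive the restriction. This works only because $\widetilde{\mathbb{A}}$ and $\mathbb{A}+\mathbb{D}$ were arranged to agree on all of $\Z^+_{\kappa_+}$, so that their transition operators agree for indices $k\geqslant n\geqslant\kappa_+$; and the choice of $\kappa_+$ must be made so as to swallow both the drift $\mathbb{A}-\mathcal{A}(\cdot,+\infty)$ (via uniform convergence) and the perturbation $\mathbb{D}$ (via $\gamma_+$-smallness) into a single globally $\gamma_{\mathcal{A}(\cdot,+\infty)}$-small perturbation.
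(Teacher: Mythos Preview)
Your proof is correct and follows essentially the same route as the paper: set $\gamma_+:=\tfrac12\gamma_{\mathcal{A}(\cdot,+\infty)}$, extend the tail of $\mathbb{A}+\mathbb{D}$ to all of $\Z$ by the constant family $\mathcal{A}(\cdot,+\infty)$, apply Proposition~\ref{family-hyperbolic} to the resulting globally small perturbation, and restrict the projector back to $\Z^+_{\kappa_+}$. You are in fact slightly more explicit than the paper in separating the two thresholds $\kappa^{(1)},\kappa^{(2)}$ and in spelling out why invariance, regularity and the estimates \eqref{ED1}--\eqref{ED2} survive the restriction, but the argument is the same.
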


\begin{proof}
We recalled in the final paragraph of Section \ref{section-Nemitski} that, if $\mathbb{A}$ is asymptotically hyperbolic, then there are two continuous families of hyperbolic matrices $\mathcal{A}_{\pm}\colon \Lambda\to H(d,\R)$ such that
\begin{equation*}
\mathcal{A}_{\pm}(\lambda)=\lim\limits_{n\to\pm\infty}\mathbb{A}(\lambda,n)
\end{equation*}
uniformly with respect to the parameter $\lambda\in\Lambda$. Let now $\gamma_{\mathcal{A_\pm}}>0$ be as in Proposition \ref{family-hyperbolic} for $\mathcal{A}_{\pm}$.
Our aim is to show that the assertion of our theorem holds for $\gamma_{\pm}:=\gamma_{\mathcal{A}_{\pm}}/2$. Indeed, let
$\mathbb{D}\colon \Lambda\times\Z\to \mathcal{L}(\R^d)$ be $\gamma_+$--small at $+\infty$ (the case of $\mathbb{D}\colon \Lambda\times\Z\to \mathcal{L}(\R^d)$ being $\gamma_-\!$--small at $-\infty$ is similar and therefore it is left to the reader).
Then there exists $n_{+}>0$ such that for all $\lambda\in\Lambda$
\begin{align*}
\|\mathbb{A}(\lambda,n)-\mathcal{A}_{+}(\lambda)\|<\gamma_+ \text{ for }n\geqslant n_{+}.
\end{align*}
We define $\mathbb{B}_+\colon \Lambda\times\Z\to \mathcal{L}(\R^d)$ by
\begin{align*}
\mathbb{B}_{+}(\lambda,n):=\mathbbm{1}_{[n_+,\infty)}(n)(
\mathbb{A}(\lambda,n)-\mathcal{A}_+(\lambda)+\mathbb{D}(\lambda,n)).
\end{align*}
It is clear that $\mathbb{B}_+$ is $\gamma_{\mathcal{A}_+}\!$--small. Consequently, Proposition \ref{family-hyperbolic} implies the existence
of a parametrized projector $\widetilde{\mathbb{P}}_+\colon \Lambda\times\Z \to \mathcal{L}(\R^d)$ such that
\begin{itemize}
\item $\widetilde{\mathbb{P}}_+(\lambda)\colon \Z\to \mathcal{L}(\R^d)$ is regular and commutes with $\mathcal{A}_+(\lambda)+\mathbb{B}_+(\lambda)$ for all $\lambda\in\Lambda$,
\item $\mathcal{A}_+(\lambda)+\mathbb{B}_+(\lambda)\colon\Z\to \mathcal{L}(\R^d)$ admits an ED on $\Z$ with respect to $\widetilde{\mathbb{P}}_+(\lambda)$ for all $\lambda\in\Lambda$.
\end{itemize}
We set $\kappa_+:=n_+$ and note that the restrictions $\mathbb{P}_+(\lambda)\colon \Z^+_{\kappa_+}\to \mathcal{L}(\R^d)$ of $\widetilde{\mathbb{P}}_+(\lambda)\colon \Z\to \mathcal{L}(\R^d)$ to $\Z_{\kappa_+}^+$ are projections with the desired properties. This completes the proof.
\end{proof}

\section{Fredholm properties of Nemitski operators and a family index theorem}\label{section-index}

The aim of this section is to discuss Fredholm properties of certain functional operators\linebreak $L\colon\ell_0(\R^d)$ $\to \ell_0(\R^d)$ which are induced by linear discrete vector fields $\mathbb{A}\colon \Z\to \mathcal{L}(\R^d)$. We give a new proof of a well known index theorem, which we afterwards generalize to families of operators and where the indices are $KO$-theory classes following a construction of Atiyah and J\"anich. This latter result generalizes the main theorem of \cite{SkibaIch} for asymptotically hyperbolic systems to systems which merely have an ED.\\
We begin by showing a technical property which is of independent interest.

\begin{lemma}[Silverman-Toeplitz Theorem for sequences of matrices]\label{Sil-Toep} Assume that $f\colon \Z\times\Z\to M(d,\R)$ satisfies the following conditions:
\begin{enumerate}
\item[$(1)$] $f(n,\cdot)\in \ell_1(M(d,\R))$ for all $n\in \Z$ and $\sup\limits_{n\in\Z}\|f(n,\cdot)\|_{\ell_1(M(d,\R))}<\infty$,
\item[$(2)$] $f(\cdot,k)\in \ell_0(M(d,\R))$ for all $k\in\Z$.
\end{enumerate}
Then $f\ast \phi\in \ell^{\infty}(\R^d)$
$($resp. $f\ast \phi\in \ell_0^{\pm}(\R^d))$ if $\phi\in \ell^{\infty}(\R^d)$ $($resp. $\phi\in \ell_0^{\pm}(\R^d))$, where $f\ast \phi\colon \Z\to \R^d$ is defined by
\begin{align*}
(f\ast \phi)(n):=\sum_{k\in\Z} f(n,k)\phi(k) \text{ for all }n\in \Z.
\end{align*}
\end{lemma}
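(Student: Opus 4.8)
The statement is a Silverman–Toeplitz type result, so the natural strategy is to mimic the classical scalar proof, checking at each step that nothing is lost by passing to matrices. First I would verify that $f\ast\phi$ is well defined: for fixed $n$, condition $(1)$ gives $\sum_k \|f(n,k)\|\,|\phi(k)| \leqslant \|f(n,\cdot)\|_{\ell_1(M(d,\R))}\,\|\phi\|_\infty < \infty$, so the series converges absolutely in $\R^d$. The same estimate, now using the uniform bound $M := \sup_{n}\|f(n,\cdot)\|_{\ell_1(M(d,\R))}$, yields $\|(f\ast\phi)(n)\| \leqslant M\,\|\phi\|_\infty$ for every $n$, hence $f\ast\phi\in\ell^\infty(\R^d)$ whenever $\phi\in\ell^\infty(\R^d)$. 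This disposes of the first assertion.

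For the second assertion, fix $\phi\in\ell^\pm_0(\R^d)$ and $\varepsilon>0$; I must show $(f\ast\phi)(n)\to 0$ along the relevant semi-axis. I would split the convolution sum at a threshold $|k|\leqslant N$ versus $|k|>N$. Choosing $N$ large, the tail $\sum_{|k|>N}\|f(n,k)\|\,|\phi(k)|$ is controlled by $M\cdot\sup_{|k|>N}|\phi(k)|$, which is $<\varepsilon/2$ since $\phi\in\ell_0$; this bound is uniform in $n$. The head $\sum_{|k|\leqslant N} f(n,k)\phi(k)$ is a \emph{finite} sum of terms, and for each fixed $k$ condition $(2)$ says $f(n,k)\to 0$ as $|n|\to\infty$; hence the head tends to $0$ as $n\to\pm\infty$, so it is $<\varepsilon/2$ for $|n|$ large. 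Combining, $|(f\ast\phi)(n)|<\varepsilon$ for $|n|$ large, i.e. $f\ast\phi\in\ell_0(\R^d)$; and since the construction of $f\ast\phi$ only uses values of $\phi$ and the membership in $\ell^\pm_0$ is just the statement that $\phi$ (and hence, one checks directly from the formula, $f\ast\phi$) is supported on $\Z^\pm_0$ up to the $\ell_0$ decay, one gets $f\ast\phi\in\ell^\pm_0(\R^d)$. (One should be slightly careful here: the $\pm$ decorated spaces are those sequences in $\ell_0$ that vanish on the complementary half-line, so I would note that if $\phi(k)=0$ for $k$ outside $\Z^\pm_0$ then the convolution sum ranges only over such $k$, and the support statement is immediate; only the $\ell_0$ decay needs the argument above.)

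The only mildly delicate point — and the one I expect to be the main obstacle — is making sure the matrix norms are handled correctly when interchanging the order of estimation, i.e. that $\|f(n,k)\phi(k)\|\leqslant \|f(n,k)\|\,|\phi(k)|$ with the operator norm on matrices, and that the finite-head argument genuinely only needs pointwise-in-$k$ decay of $f(\cdot,k)$ rather than any uniformity. Both are routine once stated, but they are the places where a scalar proof is actually using commutativity or absolute values in a way that must be replaced by the submultiplicativity of the operator norm. No deeper idea is needed; the result is essentially the classical Silverman–Toeplitz theorem with $|\cdot|$ replaced throughout by $\|\cdot\|$ on $M(d,\R)$.
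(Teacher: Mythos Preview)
Your head/tail split is exactly the paper's argument, just presented as a direct $\varepsilon$-estimate rather than as convergence of truncations $f^m\ast\phi\to f\ast\phi$ in $\ell^\infty_+$ followed by closedness of $\ell_0^+$; the content is identical.

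One slip to flag: your claim that ``the support statement is immediate'' is a non sequitur. Restricting the summation index to $k\in\Z^\pm_0$ says nothing about whether $(f\ast\phi)(n)=\sum_{k\in\Z^\pm_0}f(n,k)\phi(k)$ vanishes for $n\notin\Z^\pm_0$; that is a condition on the $n$-variable, not on $k$. In fact the support assertion does not follow from hypotheses $(1)$ and $(2)$ alone, and the paper's own proof glosses over the same point. In the paper's application the function $f$ carries an explicit factor $\mathbbm{1}_{\N}(n)$, which supplies the missing support; strictly speaking the lemma needs such a hypothesis on $f$, or its conclusion should be read only as $(f\ast\phi)(n)\to 0$ along the relevant semi-axis.
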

\begin{proof}
Throughout the proof we set $c_0:=\sup\limits_{n\in\Z}\|f(n,\cdot)\|_{\ell_1(M(d,\R))}$ which is finite by assumption.\\
We first assume that $\phi\in \ell^{\infty}(\R^d)$. Then
\begin{align*}
|(f\ast \phi)(n)|=&\left|\sum_{k\in\Z} f(n,k)\phi(k)\right|\leqslant \sum_{k\in\Z} |f(n,k)\phi(k)|\leqslant \\
&\sum_{k\in\Z} \|f(n,k)\|\cdot |\phi(k)|\leqslant \sum_{k\in\Z} \|f(n,k)\|\cdot \|\phi\|_{\infty}=\\
&\left(\sum_{k\in\Z} \|f(n,k)\|\right)\cdot\|\phi\|_{\infty}=\|f(n,\cdot)\|_{\ell_1(M(d,\R))}\cdot \|\phi\|_{\infty}\leq c_0\|\phi\|_{\infty}
\end{align*}
and hence $f\ast \phi\in\ell^{\infty}(\R^d)$.\\
Now assume that $\phi\in \ell_0^+(\R^d)$. We aim to show that $f\ast\phi\in \ell_0^+(\R^d)$. For any $m\in\N$ we set
\begin{align*}
\mathcal{D}_m:=\{(i,k)\mid i\geqslant m \text{ and }k\geqslant m \},\quad \mathcal{D}_{\!m}^c:=\N\times\N-\mathcal{D}_m
\end{align*}
and
\begin{equation*}
f^m(n,k):=\mathbbm{1}_{\mathcal{D}_{\!m}^c}(n,k)f(n,k).
\end{equation*}
Then, for $n\geqslant m$,
\begin{align*}
|(f^m\ast \phi)(n)|=&\left|\sum_{k\in\Z}f^m(n,k)\phi(k)\right|=\left|\sum_{k\in\Z}\mathbbm{1}_{\mathcal{D}_{\!m}^c}(n,k)f(n,k)\phi(k)\right|=\\
&\left|\sum_{k=0}^{m}\mathbbm{1}_{\mathcal{D}_{\!m}^c}(n,k)f(n,k)\phi(k)\right|=\left|\sum_{k=0}^{m}f(n,k)\phi(k)\right|\leqslant\\
&\sum_{k=0}^{m}|f(n,k)\phi(k)|\leqslant \sum_{k=0}^{m}\|f(n,k)\|\cdot |\phi(k)|\leqslant\|\phi\|_{\infty}\left(\sum_{k=0}^{m}\|f(n,k)\|\right)
\xrightarrow[n\rightarrow\infty]{}0
\end{align*}
and $(f^m\ast \phi)(n)=0$ for all $n<0$. Thus
\begin{align*}
f^m\ast \phi\in \ell_0^+(\R^d) \text{ for all }  \phi\in \ell_0^+(\R^d),\;m\in\N.
\end{align*}
As $f\ast \phi-f^m\ast \phi=(f-f^m)\ast \phi$, we obtain

\begin{align*}
&|((f-f^m)\ast \phi)(n)|=\left|\sum_{k\in\Z}(f-f^m)(n,k)\phi(k)\right|=\left|\sum_{k\in\Z}(1-\mathbbm{1}_{\mathcal{D}_{m}^c}(n,k))f(n,k)\phi(k)\right|=\\
&\left|\sum_{k\in\Z}\mathbbm{1}_{\mathcal{D}_m}(n,k)f(n,k)\phi(k)\right|=\left|\sum_{k=m}^{\infty}f(n,k)\phi(k)\right|\leqslant
\sum_{k=m}^{\infty}|f(n,k)\phi(k)|\leqslant \\ &\left(\sum_{k=m}^{\infty}\|f(n,k)\|\right)\left(\sup_{k\geqslant m}|\phi(k)|\right)\leqslant
\|f(n,\cdot)\|_{\ell_1(M(d,\R))}\left(\sup_{k\geqslant m}|\phi(k)|\right)\leqslant c_0\left(\sup_{k\geqslant m}|\phi(k)|\right)
\end{align*}
and so
\begin{equation*}
\|(f-f^m)\ast \phi\|_{\infty}\leqslant c_0\left(\sup_{k\geqslant m}|\phi(k)|\right)
\xrightarrow[m\rightarrow\infty]{}0.
\end{equation*}
Consequently, we have proved that
\begin{equation*}
f^m\ast \phi\xrightarrow[m\rightarrow\infty]{} f\ast \phi \text{ in } \ell^{\infty}_+(\R^d).
\end{equation*}
As $\ell_0^+(\R^d)$ is a closed subspace of $\ell^{\infty}_+(\R^d)$ and $f^m\ast \phi\in \ell_0^+(\R^d)$, we obtain that $f\ast \phi\in \ell_0^+(\R^d)$.

As the proof for $\phi\in\ell_0^-(\R^d)$ is very similar to the case $\phi\in\ell_0^+(\R^d)$, we leave this remaining case to the reader. This completes the proof.
\end{proof}
Now we are beginning our discussion of Fredholm properties.

\begin{prop}\label{Fredholm-L}
Assume that the bounded linear discrete vector field $\mathbb{A}\colon \Z\to \mathcal{L}(\R^d)$ admits an ED on $\N$
$($with regular projector $\mathbb{P}\colon \N\to \mathcal{L}(\R^d))$. Then the operator
$L^+\colon \ell_+^{\infty}(\R^d)\to \ell_+^{\infty}(\R^d)$ given by
\begin{align}\label{operator-L}
(L^+\phi)(n):=\mathbbm{1}_{\N}(n)\cdot((\mathbb{S}_l- \mathbb{A})\phi)(n),
\end{align}
for $\phi\in \ell_+^{\infty}(\R^d)$ and $n\in\Z$, is surjective and
\begin{equation}\label{kernel-L}
\ker(L^+)=\{\phi\in \ell_0^+(\R^d)\mid \phi(n)=\Phi(n,0)x,\;n\in\N,\;x\in \im(\mathbb{P}(0))\}.
\end{equation}
Moreover, $L^+(\ell_0^+(\R^d))\subset\ell_0^+(\R^d)$ and $L^+|\ell_0^+(\R^d)\colon \ell_0^+(\R^d)\to \ell_0^+(\R^d)$ is surjective as well $($\footnote{This result in the autonomous case (i.e. when $\mathbb{A}(n)=\mathcal{A}$ for all $n\in \Z$, where $\mathcal{A}$ is a hyperbolic matrix) was proved in the paper \cite[Lemma 2.1]{Ab-Ma2}. Moreover, it should be noted that a similar result was also considered by Baskakov in the paper \cite{baskakov}. However, he considered the slightly modified operator $(L\phi)(n)=\phi(n)-\mathbb{A}(n)\phi(n-1)$, in contrast to the operator $L^+$ defined in \eqref{operator-L} which is harder to handle. }$)$.
\end{prop}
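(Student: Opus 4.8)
The idea is to recognise $L^+$ as the operator that is inverted by a Green's function of the form \eqref{Green}: one solves the inhomogeneous recursion $\phi(n+1)-\mathbb{A}_n\phi(n)=\psi(n)$ on $\N$ by a variation-of-constants formula in which the stable part is summed forward and the unstable part is summed backward, using the inverses $\Phi(n,k)\colon\ker\mathbb{P}(k)\to\ker\mathbb{P}(n)$ that exist because $\mathbb{P}$ is regular. Throughout one uses the dichotomy constants $K\geqslant1$, $\alpha\in(0,1)$, the bound $M:=\sup_{n\in\N}\|\mathbb{P}(n)\|<\infty$, and the estimate \eqref{ED3} (equivalent to \eqref{ED2} by Lemma~\ref{ED-inverse}), which reads $|\Phi(n,k)(I_d-\mathbb{P}(k))x|\leqslant K\alpha^{k-n}|(I_d-\mathbb{P}(k))x|$ for $n\leqslant k$.

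\textbf{Kernel.} A sequence $\phi\in\ell_+^{\infty}(\R^d)$ lies in $\ker(L^+)$ exactly when $\phi(n+1)=\mathbb{A}_n\phi(n)$ for all $n\in\N$, i.e. $\phi(n)=\Phi(n,0)\phi(0)$. Splitting $\phi(0)=\mathbb{P}(0)\phi(0)+(I_d-\mathbb{P}(0))\phi(0)$ and using invariance of $\mathbb{P}$, one gets $(I_d-\mathbb{P}(n))\phi(n)=\Phi(n,0)(I_d-\mathbb{P}(0))\phi(0)$; since $\phi$ is bounded and $M<\infty$, this stays bounded, so \eqref{ED2} forces $(I_d-\mathbb{P}(0))\phi(0)=0$. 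Conversely, if $\phi(0)=x\in\im(\mathbb{P}(0))$ then $|\phi(n)|\leqslant K\alpha^n|x|\to0$, so $\phi\in\ell_0^+(\R^d)$ and solves the homogeneous equation. This gives \eqref{kernel-L} and shows in particular $\ker(L^+)\subset\ell_0^+(\R^d)$, so $L^+|\ell_0^+(\R^d)$ has the same kernel.

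\textbf{Surjectivity of $L^+$.} Given $\psi\in\ell_+^{\infty}(\R^d)$ I would set $\phi(n):=0$ for $n<0$ and, for $n\in\N$,
\begin{align*}
\phi(n):=\sum_{k=0}^{n-1}\Phi(n,k+1)\mathbb{P}(k+1)\psi(k)-\sum_{k=n}^{\infty}\Phi(n,k+1)(I_d-\mathbb{P}(k+1))\psi(k).
\end{align*}
By \eqref{ED1} and a geometric series the first sum is bounded by $KM(1-\alpha)^{-1}\|\psi\|_{\infty}$, and by \eqref{ED3} the second by $K(1+M)\alpha(1-\alpha)^{-1}\|\psi\|_{\infty}$, so $\phi\in\ell_+^{\infty}(\R^d)$. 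That $L^+\phi=\psi$ follows from a direct telescoping computation: using the cocycle relation $\Phi(n+1,k+1)=\mathbb{A}_n\Phi(n,k+1)$ — valid for $k+1\leqslant n$ by definition of $\Phi$, and for $k\geqslant n$ on $\ker\mathbb{P}(k+1)$ because $\mathbb{A}_n|\ker\mathbb{P}(n)$ is an isomorphism — together with $\Phi(k,k)=\id$, one checks that the first (stable) summand $u$ satisfies $u(n+1)=\mathbb{A}_nu(n)+\mathbb{P}(n+1)\psi(n)$ and the second (unstable) summand $v$ satisfies $v(n+1)=\mathbb{A}_nv(n)+(I_d-\mathbb{P}(n+1))\psi(n)$; adding, $\phi(n+1)-\mathbb{A}_n\phi(n)=\psi(n)$ for all $n\in\N$, hence $L^+\phi=\psi$.

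\textbf{The subspace $\ell_0^+(\R^d)$ and the main obstacle.} The inclusion $L^+(\ell_0^+(\R^d))\subset\ell_0^+(\R^d)$ is immediate: $\phi(n)\to0$ and $\sup_n\|\mathbb{A}_n\|<\infty$ give $\phi(n+1)-\mathbb{A}_n\phi(n)\to0$. For surjectivity of $L^+|\ell_0^+(\R^d)$, take $\psi\in\ell_0^+(\R^d)$ and the same $\phi$; writing $\phi=g\ast\psi+h\ast\psi$ with $g(n,k):=\mathbbm{1}_{\{0\leqslant k<n\}}\Phi(n,k+1)\mathbb{P}(k+1)$ and $h(n,k):=-\mathbbm{1}_{\{0\leqslant n\leqslant k\}}\Phi(n,k+1)(I_d-\mathbb{P}(k+1))$, the estimates above give $\sup_n\|g(n,\cdot)\|_{\ell_1(M(d,\R))}<\infty$ and $\sup_n\|h(n,\cdot)\|_{\ell_1(M(d,\R))}<\infty$, while for each fixed $k$ the sequence $g(\cdot,k)$ decays exponentially and $h(\cdot,k)$ has finite support, so both lie in $\ell_0(M(d,\R))$; Lemma~\ref{Sil-Toep} then yields $\phi\in\ell_0^+(\R^d)$. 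The main obstacle is the surjectivity step: assembling the correct variation-of-constants formula with the right index shifts and verifying that the \emph{backward}-summed unstable part genuinely solves the recursion — this is exactly where regularity of $\mathbb{P}$ and the equivalence \eqref{ED3} (via Lemma~\ref{ED-inverse}) are indispensable; the two $\ell_0^+$-statements are then precisely what the Silverman--Toeplitz Lemma~\ref{Sil-Toep} was prepared for.
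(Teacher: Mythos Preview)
Your proof is correct and follows essentially the same route as the paper: the same variation-of-constants formula built from the Green's function \eqref{Green}, the same ED-based geometric-series bounds, the same telescoping verification that $L^+\phi=\psi$, the same kernel argument via the growth estimate \eqref{ED2}, and the same appeal to Lemma~\ref{Sil-Toep} for the $\ell_0^+$-surjectivity. The only cosmetic difference is that you split the kernel $f(n,k)=\mathbbm{1}_{\N}(n)\mathbbm{1}_{\N}(k)\mathbb{G}_\Phi(n,k+1)$ into its stable and unstable pieces $g$ and $h$ and apply Silverman--Toeplitz to each separately, whereas the paper verifies the hypotheses for the combined $f$ in one go; both are equally valid.
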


\begin{proof}
Let us first observe that $L^+\colon \ell_+^{\infty}(\R^d)\to \ell_+^{\infty}(\R^d)$ is well-defined, which follows from
\begin{align*}
|(L^+\phi)(n)|=&|\phi(n+1)-\mathbb{A}(n)\phi(n)|\leqslant |\phi(n+1)|+\|A(n)\||\phi(n)|\leqslant\\
& \|\phi\|_{\infty}+\left(\sup_{n\in\N}\|\mathbb{A}(n)\|\right)\|\phi\|_{\infty}<\infty.
\end{align*}
We now define $M\colon \ell_+^{\infty}(\R^d)\to \ell_+^{\infty}(\R^d)$ by

\begin{align}\label{function-M}
(M\phi)(n)&=
\sum\limits_{k=0}^{\infty}\mathbbm{1}_{\N}(n)\mathbb{G}_{\Phi}(n,k+1)\phi(k)=\\
&=\begin{cases}\notag
-\sum\limits_{k=0}^{\infty}\Phi(0,k+1)(I_d-\mathbb{P}(k+1))\phi(k) & \text{ if }\; n=0,\\
\sum\limits_{k=0}^{n-1}\Phi(n,k+1)\mathbb{P}(k+1)\phi(k)-\sum\limits_{k=n}^{\infty}\Phi(n,k+1)(I_d-\mathbb{P}(k+1))\phi(k) & \text{ if }\; n> 0,\\
 0 &\text{ if }\; n<0.
\end{cases}
\end{align}
where $\mathbb{G}_{\Phi}$ is Green's function from \eqref{Green}. Our first aim is to show that $M$ is well-defined.
For $n>0$ we have
\begin{align*}
|M\phi(n)|=\left|\sum\limits_{k=0}^{\infty}\mathbbm{1}_{\N}(n)\mathbb{G}_{\Phi}(n,k+1)\phi(k)\right|\leqslant
K\frac{1+\alpha}{1-\alpha}\left(1+\sup_{k\in\N}\|\mathbb{P}(k)\|\right)\|\phi\|_{\infty}
\end{align*}
since
\begin{align}
\begin{split}
\label{estimation}
&\left|\sum\limits_{k=0}^{\infty}\mathbbm{1}_{\N}(n)\mathbb{G}_{\Phi}(n,k+1)\phi(k)\right|\leqslant \sum\limits_{k=0}^{\infty}|\mathbbm{1}_{\N}(n)\mathbb{G}_{\Phi}(n,k+1)\phi(k)|=\\
&\sum\limits_{k=0}^{n-1}|\Phi(n,k+1)\mathbb{P}(k+1)\phi(k)|+\sum\limits_{k=n}^{\infty}|\Phi(n,k+1)(I_d-\mathbb{P}(k+1))\phi(k)|.
\end{split}
\end{align}
and, by using the exponential dichotomy,
\begin{align}\label{estimation3}
\begin{split}
&\eqref{estimation}=\sum\limits_{k=0}^{n-1}K\alpha^{n-k-1}|\mathbb{P}(k+1)\phi(k)|+\sum\limits_{k=n}^{\infty}K\alpha^{k+1-n}|(I_d-\mathbb{P}(k+1))\phi(k)|\leqslant \\
&\sum\limits_{k=0}^{n-1}K\alpha^{n-k-1}\left(\sup_{k\in\N}\|\mathbb{P}(k)\|\right)\|\phi\|_{\infty}+
\sum\limits_{k=n}^{\infty}K\alpha^{k+1-n}\left(1+\sup_{k\in\N}\|\mathbb{P}(k)\|\right)\|\phi\|_{\infty}\leqslant\\
&K\left(1+\sup_{k\in\N}\|\mathbb{P}(k)\|\right)\|\phi\|_{\infty}\left(\sum\limits_{i=0}^{\infty}\alpha^{i}+\sum\limits_{i=0}^{\infty}\alpha^{i+1}\right)
=K\frac{1+\alpha}{1-\alpha}\left(1+\sup_{k\in\N}\|\mathbb{P}(k)\|\right)\|\phi\|_{\infty}.
\end{split}
\end{align}

\noindent
Moreover, for $n=0$ we have
\begin{align*}
|M\phi(0)|=&\left|\sum\limits_{k=0}^{\infty}\mathbbm{1}_{\N}(0)\mathbb{G}_{\Phi}(0,k+1)\phi(k)\right|=
\left|\sum\limits_{k=0}^{\infty}-\Phi(0,k+1)(I_d-\mathbb{P}(k+1))\phi(k)\right|\leqslant \\
&\sum\limits_{k=0}^{\infty}|\Phi(0,k+1)(I_d-\mathbb{P}(k+1))\phi(k)|\leqslant
\sum_{k=0}^{\infty}K\alpha^{k+1}|(I_d-\mathbb{P}(k+1))\phi(k)|\leqslant \\
&
\left(\sum_{k=0}^{\infty}\alpha^{k+1}\right)K\left(1+\sup_{k\in\N}\|\mathbb{P}(k)\|\right)\|\phi|_{\infty}=
K\frac{\alpha}{1-\alpha}\left(1+\sup_{k\in\N}\|\mathbb{P}(k)\|\right)\|\phi\|_{\infty},
\end{align*}
and finally $|M\phi(n)|=0$ for $n<0$, which proves that the operator $M\colon \ell_+^{\infty}(\R^d)\to \ell_+^{\infty}(\R^d)$ is indeed well-defined.
\newline\indent
Next we aim to show that $(L^+M)\phi=\phi$. Observe that for $n=0$ we have

\begin{align*}
&(L^+M\phi)(0)=S_l(M\phi)(0)-\mathbb{A}(0)(M\phi)(0)=(M\phi)(1)-\mathbb{A}(0)(M\phi)(0)=\\
&\mathbb{P}(1)\phi(0)-\sum^\infty_{k=1}\Phi(1,k+1)(I_d-\mathbb{P}(k+1))\phi(k)+\sum^\infty_{k=0}\mathbb{A}(0)\Phi(0,k+1)(I_d-\mathbb{P}(k+1))\phi(k)=\\
&\mathbb{P}(1)\phi(0)-\sum^\infty_{k=1}\Phi(1,k+1)(I_d-\mathbb{P}(k+1))\phi(k)+\sum^\infty_{k=0}\Phi(1,k+1)(I_d-\mathbb{P}(k+1))\phi(k)=\\
&\mathbb{P}(1)\phi(0)+(I_d-\mathbb{P}(1))\phi(0)=\phi(0),
\end{align*}

and for $n>0$
\begin{align}
&(L^+M\phi)(n)=S_l(M\phi)(n)-\mathbb{A}(n)(M\phi)(n)=(M\phi)(n+1)-\mathbb{A}(n)(M\phi)(n)=\notag\\
\begin{split}\label{estimation2}
&\sum^{n}_{k=0} \Phi(n+1,k+1)\mathbb{P}(k+1)\phi(k)-\sum^\infty_{k=n+1}\Phi(n+1,k+1)(I_d-\mathbb{P}(k+1))\phi(k)-\\
&\sum^{n-1}_{k=0}\mathbb{A}(n)\Phi(n,k+1)\mathbb{P}(k+1)\phi(k)+\sum^\infty_{k=n}\mathbb{A}(n)\Phi(n,k+1)(I_d-\mathbb{P}(k+1))\phi(k).
\end{split}
\end{align}

\noindent As $\mathbb{A}(n)\Phi(n,k+1)=\Phi(n+1,k+1)$,
\begin{align*}
&\eqref{estimation2}=\sum^{n}_{k=0} \Phi(n+1,k+1)\mathbb{P}(k+1)\phi(k)-\sum^\infty_{k=n+1}\Phi(n+1,k+1)(I_d-\mathbb{P}(k+1))\phi(k)-\\
&\sum^{n-1}_{k=0}\Phi(n+1,k+1)\mathbb{P}(k+1)\phi(k)+\sum^\infty_{k=n}\Phi(n+1,k+1)(I_d-\mathbb{P}(k+1))\phi(k)=\\
&\mathbb{P}(n+1)\phi(n)+(I_d-\mathbb{P}(n+1))\phi(n)=\phi(n),
\end{align*}
as well as $(L^+M\phi)(n)=0$ for $n<0$. Thus $L$ is surjective.\\
Now we are going to describe the kernel of $L^+\colon \ell_+^{\infty}(\R^d)\to \ell_+^{\infty}(\R^d)$. Let $\phi\in \ker(L^+)$. Then
\begin{align*}
\phi(n)=\Phi(n,0)\phi(0)
\end{align*}
for all $n\geqslant 0$. Since $\R^d=\im(\mathbb{P}(0))\oplus \ker(\mathbb{P}(0))$, we need to consider the two cases $\phi(0)\in \im(\mathbb{P}(0))$ and $\phi(0)\in \ker(\mathbb{P}(0))$. If $\phi(0)\in \im(\mathbb{P}(0))$ with $\phi\in \ker(L)$, then
\begin{align*}
|\phi(n)|=|\Phi(n,0)\phi(0)|\leqslant K\alpha^{n}|\mathbb{P}(0)\phi(0)|=K\alpha^n|\phi(0)|\xrightarrow[n\rightarrow\infty]{}0,
\end{align*}
while if $\phi(0)\in \ker(\mathbb{P}(0))$, then we would get
\begin{equation*}
|\phi(n)|=|\Phi(n,0)\phi(0)|=|\Phi(n,0)(I_d-\mathbb{P}(0))\phi(0)|\geqslant (1/K)(1/\alpha)^{n}|(I_d-\mathbb{P}(0))\phi(0)|
\xrightarrow[n\rightarrow\infty]{}\infty,
\end{equation*}
which contradicts the fact that $\phi\in \ell_+^{\infty}(\R^d)$. Thus we see that if $\phi\in\ker(L^+)$, then $\phi(0)\in \im(\mathbb{P}(0))$. Hence \eqref{kernel-L} is shown. \newline\indent It is not hard to see that $L^+(\ell_0^+(\R^d))\subset \ell_0^+(\R^d)$. Indeed, let $\phi\in \ell_0^+(\R^d)$. Then, for $n\geqslant 0$,
\begin{align*}
|(L^+\phi)(n)|=&|\phi(n+1)-\mathbb{A}(n)\phi(n)|\leqslant |\phi(n+1)|+\|A(n)\||\phi(n)|\leqslant\\
& |\phi(n+1)|+\left(\sup_{n\in\N}\|\mathbb{A}(n)\|\right)|\phi(n)|\xrightarrow[n\rightarrow\infty]{}0.
\end{align*}
For the surjectivity of $L|\ell_0^+(\R^d)\colon \ell_0^+(\R^d)\to \ell_0^+(\R^d)$, it is enough to show that the operator $M$ defined in
\eqref{function-M} satisfies
\begin{align}\label{conlusion-M}
|(M\phi)(n)|\xrightarrow[n\rightarrow\infty]{}0
\end{align}
for all $\phi\in \ell_0^+(\R^d)$. Define $f\colon \Z\times\Z\to M(d,\mathbb{R})$ by
\begin{align*}
f(n,k):=\mathbbm{1}_{\N}(n)\mathbbm{1}_{\N}(k)\mathbb{G}_{\Phi}(n,k+1).
\end{align*}
It is easy to see that $f$ satisfies
\begin{enumerate}
\item[$(1)$] $f(n,\cdot)\in \ell_1(M(d,\R))$, for all $n\in \Z$, with $\sup\limits_{n\in\Z}\|f(n,\cdot)\|_{\ell_1(M(d,\R))}<\infty$,
\item[$(2)$] $f(\cdot,k)\in \ell_0(M(d,\R))$ for all $k\in\Z$.
\end{enumerate}
Indeed, taking into account \eqref{estimation}-\eqref{estimation3}, we have
\begin{align*}
&\|f(n,\cdot)\|_{\ell_1(M(d,\R))}=\sum_{k\in\Z}\|\mathbbm{1}_{\N}(n)\mathbbm{1}_{\N}(k)\mathbb{G}_{\Phi}(n,k+1)\|\leqslant
K\frac{1+\alpha}{1-\alpha}\left(1+\sup_{k\in\N}\|\mathbb{P}(k)\|\right),\text{ for }\;n\in \N,\\
&\|f(n,\cdot)\|_{\ell_1(M(d,\R))}=\sum_{k\in\Z}\|\mathbbm{1}_{\N}(n)\mathbbm{1}_{\N}(k)\mathbb{G}_{\Phi}(n,k+1)\|=\sum_{k\in \Z}0=0, \text{ for }\;n<0
\end{align*}
and
\begin{align*}
\|f(n,k)\|&=\|\mathbbm{1}_{\N}(n)\mathbbm{1}_{\N}(k)\mathbb{G}_{\Phi}(n,k+1)\|=\|\mathbb{G}_{\Phi}(n,k+1)\|
=\|\Phi(n,k+1)\mathbb{P}(k+1)\|\\ &\leqslant K\alpha^{n-k-1}\|\mathbb{P}(k+1)\|
\leqslant K\alpha^{n-k-1}\left(\sup_{k\in \N}\|\mathbb{P}(k+1)\|\right)\xrightarrow[n\rightarrow\infty]{}0, \text{ for }k\geqslant 0,n\geqslant k,\\
\|f(n,k)\|&=\|\mathbbm{1}_{\N}(n)\mathbbm{1}_{\N}(k)\mathbb{G}_{\Phi}(n,k+1)\|=0\xrightarrow[n\rightarrow-\infty]{}0, \text{ for }k\geqslant 0,n<0, \\
\|f(n,k)\|&=\|\mathbbm{1}_{\N}(n)\mathbbm{1}_{\N}(k)\mathbb{G}_{\Phi}(n,k+1)\|=0\xrightarrow[n\rightarrow\pm\infty]{}0, \text{ for }k<0,n\in \Z.
\end{align*}
Finally, since
\begin{equation*}
M\phi=f\ast\phi \text{ for all } \ell^+_0(\R^d),
\end{equation*}
\eqref{conlusion-M} follows from Lemma \ref{Sil-Toep}. This completes the proof.
\end{proof}
\begin{cor}\label{Fredholm-L+}
Assume that the bounded dynamical system $\mathbb{A}\colon \Z\to \mathcal{L}(\R^d)$ admits an ED on $\Z_{\kappa}^+$
with a regular projector $\mathbb{P}\colon \Z_{\kappa}^+\to \mathcal{L}(\R^d)$ for some $\kappa\geqslant 0$. Then the operator
$L_{\kappa}^+\colon \ell^+_{0\kappa}(\R^d)\to \ell^+_{0\kappa}(\R^d)$ given by
\begin{align*}
(L_{\kappa}^+\phi)(n):=\mathbbm{1}_{\Z_{\kappa}^+}(n)\cdot((S_l- \mathbb{A})\phi)(n)
\end{align*}
for all $\phi\in \ell^+_{0\kappa}(\R^d)$ and $n\in\Z$, is surjective and
\begin{align}\label{kernel-L+}
\ker(L_{\kappa}^+)=\{\phi\in \ell^+_{0\kappa}(\R^d)\mid \phi(n)=\Phi(n,\kappa)x,\;n\in\Z_{\kappa}^+,\;x\in \im(\mathbb{P}(\kappa))\}.
\end{align}
In particular, $\ker(L_{\kappa}^+)$ is isomorphic to $\im(\mathbb{P}(\kappa))$ and so $\dim \ker(L_{\kappa}^+)=\dim \im(\mathbb{P}(\kappa))$.
\end{cor}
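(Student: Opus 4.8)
The strategy is to reduce the assertion to Proposition~\ref{Fredholm-L} by a shift of the time variable. I would first replace $\mathbb{A}$ by the shifted discrete vector field $\widetilde{\mathbb{A}}\colon\Z\to\mathcal{L}(\R^d)$, $\widetilde{\mathbb{A}}_n:=\mathbb{A}_{n+\kappa}$, and the given projector $\mathbb{P}$ by $\widetilde{\mathbb{P}}\colon\N\to\mathcal{L}(\R^d)$, $\widetilde{\mathbb{P}}_n:=\mathbb{P}_{n+\kappa}$. Since $\Phi_{\widetilde{\mathbb{A}}}(k,n)=\Phi_{\mathbb{A}}(k+\kappa,n+\kappa)$ for all $k\geqslant n$ in $\N$, it is routine to check that $\widetilde{\mathbb{P}}$ is an invariant regular projector for $\widetilde{\mathbb{A}}$, that $\sup_{n\in\N}\|\widetilde{\mathbb{P}}_n\|=\sup_{n\geqslant\kappa}\|\mathbb{P}_n\|<\infty$, and that the estimates \eqref{ED1} and \eqref{ED2} for $\mathbb{A}$ on $\Z_{\kappa}^+$ translate verbatim, with the same constants $K$ and $\alpha$, into the corresponding estimates for $\widetilde{\mathbb{A}}$ on $\N$. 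As $\mathbb{A}$ is bounded, so is $\widetilde{\mathbb{A}}$; hence $\widetilde{\mathbb{A}}$ together with $\widetilde{\mathbb{P}}$ satisfies the hypotheses of Proposition~\ref{Fredholm-L}, and I may apply it to obtain the operator $\widetilde{L}^+$ of that proposition.

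Next I would introduce the shift operator $\mathcal{S}\colon\ell^+_{0\kappa}(\R^d)\to\ell^+_0(\R^d)$, $(\mathcal{S}\phi)(n):=\phi(n+\kappa)$, which is an isometric isomorphism with inverse $(\mathcal{S}^{-1}\psi)(n)=\mathbbm{1}_{\Z_{\kappa}^+}(n)\,\psi(n-\kappa)$. Using $\mathbbm{1}_{\Z_{\kappa}^+}(n+\kappa)=\mathbbm{1}_{\N}(n)$, a direct computation gives $\mathcal{S}\circ L_{\kappa}^+=\widetilde{L}^+\circ\mathcal{S}$ on $\ell^+_{0\kappa}(\R^d)$, so that $L_{\kappa}^+=\mathcal{S}^{-1}\circ(\widetilde{L}^+|\ell^+_0(\R^d))\circ\mathcal{S}$. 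Since $\widetilde{L}^+|\ell^+_0(\R^d)$ is surjective by Proposition~\ref{Fredholm-L} and $\mathcal{S}$ is an isomorphism, $L_{\kappa}^+$ is surjective. Moreover $\ker(L_{\kappa}^+)=\mathcal{S}^{-1}(\ker\widetilde{L}^+)$, where I use that, by \eqref{kernel-L} applied to $\widetilde{\mathbb{A}}$, the kernel of $\widetilde{L}^+$ is already contained in $\ell^+_0(\R^d)$. Transporting the description \eqref{kernel-L} through $\mathcal{S}^{-1}$ and substituting $\Phi_{\widetilde{\mathbb{A}}}(n,0)=\Phi_{\mathbb{A}}(n+\kappa,\kappa)$ and $\widetilde{\mathbb{P}}(0)=\mathbb{P}(\kappa)$ then yields precisely \eqref{kernel-L+}.

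Finally, for the dimension statement I would consider the linear map $\im(\mathbb{P}(\kappa))\to\ker(L_{\kappa}^+)$ sending $x$ to $\phi_x$, where $\phi_x(n):=\mathbbm{1}_{\Z_{\kappa}^+}(n)\,\Phi_{\mathbb{A}}(n,\kappa)x$. By \eqref{ED1} one has $|\phi_x(n)|\leqslant K\alpha^{n-\kappa}|x|\to 0$, so $\phi_x\in\ell^+_{0\kappa}(\R^d)$; the map is injective because $\phi_x(\kappa)=x$, and it is onto $\ker(L_{\kappa}^+)$ by \eqref{kernel-L+}. Hence $\dim\ker(L_{\kappa}^+)=\dim\im(\mathbb{P}(\kappa))$. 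I do not expect any genuine obstacle here; the only point demanding care is the bookkeeping of the index shift, in particular the consistent use of the identity $\Phi_{\widetilde{\mathbb{A}}}(k,n)=\Phi_{\mathbb{A}}(k+\kappa,n+\kappa)$ and the verification that the cut-off indicator $\mathbbm{1}_{\Z_{\kappa}^+}$ is correctly carried along by $\mathcal{S}$ and $\mathcal{S}^{-1}$.
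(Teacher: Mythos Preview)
Your proof is correct and follows essentially the same approach as the paper: both reduce to Proposition~\ref{Fredholm-L} via the time-shift isomorphism $\phi\mapsto\phi(\cdot+\kappa)$ between $\ell^+_{0\kappa}(\R^d)$ and $\ell^+_0(\R^d)$, obtaining the surjectivity and kernel description of $L_\kappa^+$ from those of the shifted operator, and then exhibit the explicit isomorphism $x\mapsto\mathbbm{1}_{[\kappa,\infty)}\Phi(\cdot,\kappa)x$ from $\im(\mathbb{P}(\kappa))$ onto $\ker(L_\kappa^+)$. Your write-up is in fact slightly more detailed than the paper's in verifying the intertwining relation and the transfer of the ED to the shifted system.
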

\begin{proof}
First, observe that $\mathbb{A}_0\colon \Z\to \mathcal{L}(\R^d)$ given by $\mathbb{A}_0(n):=\mathbb{A}(n+\kappa)$, for $n\in \Z$, admits an ED on $\N$ with respect to
$\mathbb{P}_0\colon \N\to \mathcal{L}(\R^d)$ defined by $\mathbb{P}_0(n):=\mathbb{P}(n+\kappa)$, for $n\in\Z$. Consider the commutative diagram
\begin{align}\label{diagram-L}
\begin{split}
\xymatrix@C=25pt@R=25pt{
\ell^+_{0\kappa}(\R^d)\ar[r]^-{L_{\kappa}^+} \ar[d]_-{\mathfrak{I}}^{\simeq} & \ell^+_{0\kappa}(\R^d) \ar[d]^-{\mathfrak{I}}_{\simeq}
\\
 \ell_0^+(\R^d)\ar[r]^-{L_0^+}   &
 \ell_0^+(\R^d),
 }
\end{split}
\end{align}
where $L_0^+$ and $\mathfrak{J}$ are defined by
\begin{align*}
(L_0^+\phi)(n)&=\mathbbm{1}_{\N}(n)\cdot((S_l- \mathbb{A}_0)\phi)(n)\text{ and }
(\mathfrak{J}\phi)(n)=\phi(n+\kappa),
\end{align*}
for $n\in\Z$. Now Proposition \ref{Fredholm-L} implies that $L_0^+$ is surjective with kernel as in \eqref{kernel-L}. Since $\mathfrak{J}$ is an isomorphism, it follows from the commutativity of the diagram \eqref{diagram-L} that $L_{\kappa}^+$ has the required properties. Finally, \eqref{kernel-L+} shows that
\begin{align*}
\im(\mathbb{P}(\kappa))\ni x\mapsto \phi(n):=
\mathbbm{1}_{[\kappa,\infty)}(n)\Phi(n,\kappa)x\in \ker(L^+_{\kappa})
\end{align*}
is an isomorphism, which of course implies that $\dim \ker(L_{\kappa}^+)=\dim \im(\mathbb{P}(\kappa))$. This completes the proof.
\end{proof}

\begin{prop}\label{Fredholm-L2}
Assume that the bounded linear discrete vector field $\mathbb{A}\colon \Z\to \mathcal{L}(\R^d)$ admits an ED on $\Z_{\kappa}^-$
with a regular projection $\mathbb{P}\colon \Z_{\kappa}^-\to \mathcal{L}(\R^d)$. Then the operator
$L^-_{\kappa}\colon \ell_{0\kappa}^-(\R^d)\to \ell_{0\kappa-1}^-(\R^d)$ given by
\begin{align*}
(L^-_\kappa\phi)(n):=\mathbbm{1}_{\Z_{\kappa}^-}(n)\cdot((\mathbb{S}_l- \mathbb{A})\phi)(n),
\end{align*}
for all $n\in\Z$ and $\phi\in \ell^-_{0\kappa}(\R^d)$, is surjective and
\begin{align*}
\ker(L^-_\kappa)=\{\phi\in \ell^-_{0\kappa}(\R^d)\mid \phi(n)=\Phi(n,\kappa)x,\;n\leqslant \kappa,\;x\in \ker(\mathbb{P}(\kappa))\}.
\end{align*}
In particular, $\ker(L_{\kappa}^-)$ is isomorphic to $\ker(\mathbb{P}(\kappa))$ and so $\dim \ker(L_{\kappa}^-)=\dim \ker(\mathbb{P}(\kappa))$.
\end{prop}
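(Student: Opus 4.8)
\emph{Proof proposal.} The plan is to transcribe the argument of Proposition~\ref{Fredholm-L} and Corollary~\ref{Fredholm-L+} to the left half-line. It is worth noting first that the obvious shortcut --- reflecting $\Z^{-}_{\kappa}$ onto $\N$ and quoting Corollary~\ref{Fredholm-L+} --- is \emph{not} available: under $n\mapsto\kappa-n$ the equation \eqref{linear-equation} becomes a backward recursion whose ``unstable'' block is the inverse of $\mathbb{A}|_{\im\mathbb{P}}$, which need not exist since $\mathbb{A}$ is not assumed invertible; this is precisely why a direct Green's-function construction (rather than a reduction) is needed here. As in Corollary~\ref{Fredholm-L+}, conjugating $L^{-}_{\kappa}$ by the shift isomorphism $(\mathfrak{J}\phi)(n):=\phi(n+\kappa)$ reduces everything to the case $\kappa=0$, so I will assume $\kappa=0$; here $L^{-}_{0}$ encodes the recursion $\phi(n+1)-\mathbb{A}(n)\phi(n)=\psi(n)$ for $n\leqslant-1$ only, which is exactly why the target is $\ell^{-}_{0,-1}(\R^{d})$, and keeping this boundary index straight is the one point in the whole proof that needs real care.

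For surjectivity I would introduce the Green's operator $M^{-}\colon\ell^{-}_{0,-1}(\R^{d})\to\ell^{\infty}_{-}(\R^{d})$,
\[
(M^{-}\psi)(n):=\mathbbm{1}_{\Z^{-}_{0}}(n)\sum_{k\leqslant-1}\mathbb{G}_{\Phi}(n,k+1)\psi(k),
\]
with $\mathbb{G}_{\Phi}$ as in \eqref{Green} and $\Phi(n,m)$ for $m>n$ the backward evolution on $\ker\mathbb{P}$. For $n\leqslant0$ this expands into $\sum_{k\leqslant n-1}\Phi(n,k+1)\mathbb{P}(k+1)\psi(k)$ minus the finite sum $\sum_{n\leqslant k\leqslant-1}\Phi(n,k+1)(I_{d}-\mathbb{P}(k+1))\psi(k)$, so that boundedness of $M^{-}$ follows verbatim from the estimates in \eqref{estimation}--\eqref{estimation3}, using $|\Phi(n,k+1)\mathbb{P}(k+1)|\leqslant K\alpha^{n-k-1}$ (by \eqref{ED1}) for the first, geometric sum and $|\Phi(n,k+1)(I_{d}-\mathbb{P}(k+1))|\leqslant K\alpha^{k+1-n}$ (by \eqref{ED3}) for the second. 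The identity $L^{-}_{0}M^{-}=\id$ is then the same telescoping computation as for $L^{+}M\phi$ in Proposition~\ref{Fredholm-L}: expand $(M^{-}\psi)(n+1)-\mathbb{A}(n)(M^{-}\psi)(n)$, use $\mathbb{A}(n)\Phi(n,m)=\Phi(n+1,m)$ to realign the summands, and the sums collapse to $\mathbb{P}(n+1)\psi(n)+(I_{d}-\mathbb{P}(n+1))\psi(n)=\psi(n)$. Finally, to upgrade to surjectivity of $L^{-}_{0}$ restricted to $\ell_{00}^{-}(\R^{d})$, one checks $M^{-}\psi\in\ell^{-}_{00}(\R^{d})$ by writing $M^{-}\psi=f\ast\psi$ with $f(n,k):=\mathbbm{1}_{\Z^{-}_{0}}(n)\mathbbm{1}_{\Z^{-}_{0}}(k)\mathbb{G}_{\Phi}(n,k+1)$ and verifying hypotheses $(1)$--$(2)$ of the $\ell_{0}^{-}$ version of Lemma~\ref{Sil-Toep} (the case left to the reader there) from \eqref{ED1} and \eqref{ED3}.

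For the kernel I would use the same growth-dichotomy argument as in Proposition~\ref{Fredholm-L}, but running it backward in time. If $\phi\in\ker(L^{-}_{0})$ then $\phi(0)=\Phi(0,n)\phi(n)$ for all $n\leqslant0$, hence $\mathbb{P}(n_{1})\phi(n_{1})=\Phi(n_{1},n)\mathbb{P}(n)\phi(n)$ for $n\leqslant n_{1}\leqslant0$; if $\mathbb{P}(n_{1})\phi(n_{1})\neq0$ for some such $n_{1}$, then \eqref{ED1} gives $|\mathbb{P}(n)\phi(n)|\geqslant(1/K)\alpha^{-(n_{1}-n)}|\mathbb{P}(n_{1})\phi(n_{1})|\to\infty$ as $n\to-\infty$, contradicting $\phi\in\ell^{\infty}_{-}(\R^{d})$. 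Thus $\phi(n)\in\ker\mathbb{P}(n)$ for every $n\leqslant0$, so by regularity $\phi(n)=\Phi(n,0)\phi(0)$ with $\phi(0)=(I_{d}-\mathbb{P}(0))\phi(0)\in\ker\mathbb{P}(0)$; conversely any such $\phi$ lies in $\ker(L^{-}_{0})$, and by \eqref{ED3} one has $|\phi(n)|\leqslant K\alpha^{-n}|\phi(0)|\to0$, so $\phi\in\ell^{-}_{00}(\R^{d})$. Since $\ker\mathbb{P}(0)\ni x\mapsto(\Phi(n,0)x)_{n\leqslant0}$ is injective (evaluate at $n=0$), this gives $\ker(L^{-}_{0})\cong\ker\mathbb{P}(0)$ and $\dim\ker(L^{-}_{0})=\dim\ker\mathbb{P}(0)$; undoing $\mathfrak{J}$ yields the statement for general $\kappa$. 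Everything here is a routine transcription once Proposition~\ref{Fredholm-L} and Lemma~\ref{Sil-Toep} are in hand; the only genuinely delicate step, as flagged above, is matching the boundary index of the Green's operator to the codomain $\ell^{-}_{0\kappa-1}(\R^{d})$.
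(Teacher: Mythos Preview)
Your proposal is correct and follows essentially the same line as the paper: Green's-function right inverse for surjectivity, growth dichotomy for the kernel, and Lemma~\ref{Sil-Toep} to land in $\ell_0$.

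The one genuine difference worth noting is in the kernel argument. The paper splits $\phi(\kappa)$ along $\im\mathbb{P}(\kappa)\oplus\ker\mathbb{P}(\kappa)$ and invokes Lemma~\ref{ED-inverse} (the $|||\cdot|||$ estimate) to rule out the stable component. You instead track $\mathbb{P}(n)\phi(n)$ along the trajectory using the invariance relation $\mathbb{P}(n_1)\Phi(n_1,n)=\Phi(n_1,n)\mathbb{P}(n)$, apply \eqref{ED1} directly, and use $\sup_n\|\mathbb{P}(n)\|<\infty$ to conclude $|\phi(n)|\to\infty$. Your version is slightly more elementary --- it bypasses Lemma~\ref{ED-inverse} entirely --- and it also sidesteps the mild looseness in the paper's ``it suffices to consider the two cases'' reduction (a general $\phi(\kappa)$ is a sum of both components, not one or the other). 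The reduction to $\kappa=0$ via the shift $\mathfrak{J}$ is cosmetic; the paper simply unfolds the same computation at general $\kappa$.
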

\begin{proof}
Despite that this proposition looks similar to Proposition \ref{Fredholm-L},
there are some crucial differences which we want to point out.\\
First, repeating the arguments from the proof of Proposition  \ref{Fredholm-L}, one can see that \linebreak
$L^-_{\kappa}\colon \ell_{0\kappa}^{-}(\R^d)\to \ell^{-}_{0\kappa-1}(\R^d)$ is well-defined. In order to show that $L^-_{\kappa}$ is surjective we consider the map
$M\colon \ell^-_{0\kappa-1}(\R^d)\to \ell^-_{0\kappa}(\R^d)$ given by
\begin{align}\label{function-M1}
(M\phi)(n)&=
\sum\limits_{k=-\infty}^{\kappa-1}\mathbbm{1}_{\Z_{\kappa}^-}(n)\mathbb{G}_{\Phi}(n,k+1)\phi(k)=\\
&=\begin{cases}\notag
\sum\limits_{k=-\infty}^{\kappa-1}\Phi(0,k+1)\mathbb{P}(k+1)\phi(k) & \text{ if }\; n=\kappa,\\
\sum\limits_{k=-\infty}^{n-1}\Phi(n,k+1)\mathbb{P}(k+1)\phi(k)-\sum\limits_{k=n}^{\kappa-1}\Phi(n,k+1)(I_d-\mathbb{P}(k+1))\phi(k) & \text{ if }\; n< \kappa,\\
 0 &\text{ if }\; n> \kappa.
\end{cases}
\end{align}
where $\mathbb{G}_{\Phi}$ again is Green's function from \eqref{Green}. Similarly as in the proof of Proposition  \ref{Fredholm-L} one can show that $M\colon \ell^-_{0\kappa-1}(\R^d)\to \ell^-_{0\kappa}(\R^d)$ is well-defined.
To see that $(L^-_{\kappa}M)\phi=\phi$, we note that for $n=\kappa-1$
\begin{align*}
&(L^-_{\kappa}M\phi)(\kappa-1)=\\&
\mathbb{S}_l(M\phi)(\kappa-1)-\mathbb{A}(\kappa-1)(M\phi)(\kappa-1)=(M\phi)(\kappa)-\mathbb{A}(\kappa-1)(M\phi)(\kappa-1)=\\
&\sum\limits_{k=-\infty}^{\kappa-1}\Phi(0,k+1)\mathbb{P}(k+1)\phi(k)-\\
&\left(\sum\limits_{k=-\infty}^{\kappa-2}\mathbb{A}(\kappa-1)\Phi(\kappa-1,k+1)\mathbb{P}(k+1)\phi(k)-
\mathbb{A}(\kappa-1)\Phi(\kappa-1,0)(I_d-\mathbb{P}(0))\phi(\kappa-1)\right)=\\
&\sum\limits_{k=-\infty}^{\kappa-2}\Phi(\kappa,k+1)\mathbb{P}(k+1)\phi(k)+\mathbb{P}(\kappa)\phi(\kappa-1)-
\sum\limits_{k=-\infty}^{-2}\Phi(\kappa,k+1)\mathbb{P}(k+1)\phi(k)+\\&
\Phi(\kappa,\kappa)(I_d-\mathbb{P}(\kappa))\phi(-1)=\mathbb{P}(\kappa)\phi(\kappa-1)+ (I_d-\mathbb{P}(\kappa))\phi(\kappa-1)=\phi(\kappa-1).
\end{align*}
Furthermore, for $n<\kappa-1$ we have

\begin{align}
&(L^-_{\kappa}M\phi)(n)=\mathbb{S}_l(M\phi)(n)-\mathbb{A}(n)(M\phi)(n)=(M\phi)(n+1)-\mathbb{A}(n)(M\phi)(n)=\notag\\
\begin{split}\label{A=Phi}
&\sum^{n}_{k=-\infty} \Phi(n+1,k+1)\mathbb{P}(k+1)\phi(k)-\sum^{\kappa-1}_{k=n+1}\Phi(n+1,k+1)(I_d-\mathbb{P}(k+1))\phi(k)-\\
&\sum^{n-1}_{k=-\infty}\mathbb{A}(n)\Phi(n,k+1)\mathbb{P}(k+1)\phi(k)+\sum^{\kappa-1}_{k=n}\mathbb{A}(n)\Phi(n,k+1)(I_d-\mathbb{P}(k+1))\phi(k).
\end{split}
\end{align}
\noindent As $\mathbb{A}(n)\Phi(n,k+1)=\Phi(n+1,k+1)$, it follows that
\begin{align*}
&\eqref{A=Phi}=\sum^{n}_{k=-\infty} \Phi(n+1,k+1)\mathbb{P}(k+1)\phi(k)-\sum^{\kappa-1}_{k=n+1}\Phi(n+1,k+1)(I_d-\mathbb{P}(k+1))\phi(k)-\\
&\sum^{n-1}_{k=-\infty}\Phi(n+1,k+1)\mathbb{P}(k+1)\phi(k)+\sum^{\kappa-1}_{k=n}\Phi(n+1,k+1)(I_d-\mathbb{P}(k+1))\phi(k)=\\
&\mathbb{P}(n+1)\phi(n)+(I_d-\mathbb{P}(n+1))\phi(n)=\phi(n).
\end{align*}
Finally, we note that $(L^-_{\kappa}M)\phi(n)=0$ for $n>\kappa$, which completes the proof that $L^-_{\kappa}$ is surjective. \newline\indent It remains to describe the kernel of $L^-_{\kappa}\colon \ell^-_{0\kappa}(\R^d)\to \ell^-_{0\kappa-1}(\R^d)$. To this aim, recall that by \eqref{ED3} and Lemma \ref{ED-inverse}
\begin{align*}
|\Phi(n,\kappa)(I_d-\mathbb{P}(\kappa))x|&\leqslant K\alpha^{\kappa-n}|(I_d-\mathbb{P}(\kappa))x|,\\
|||\Phi(\kappa,n)^{-1}(\mathbb{P}(\kappa)x)|||&\geqslant (1/K)(1/\alpha)^{\kappa-n}|\mathbb{P}(\kappa)x|,
\end{align*}
for all $x\in\R^d$ with $n\leqslant \kappa$, $K\geqslant 1$ and $\alpha\in (0,1)$.
If now $\phi\in \ker(L^-_{\kappa})$, then
\begin{align*}
\Phi(\kappa,n)\phi(n)=\phi(\kappa)
\end{align*}
for all $n\leqslant \kappa$. Consequently,

\begin{itemize}
\item if $\phi(\kappa)\in \ker(\mathbb{P}(\kappa))$, then $\phi(n)=\Phi(n,\kappa)\phi(\kappa)$ for all $n\leqslant \kappa$,
\item if $\phi(\kappa)\in \im(\mathbb{P}(\kappa))$, then $\phi(n)\in \Phi(\kappa,n)^{-1}(\phi(\kappa))$ for all $n\leqslant \kappa$.
\end{itemize}
Since $\R^d=\im(\mathbb{P}(\kappa))\oplus \ker(\mathbb{P}(\kappa))$, it suffices to consider the two cases $\phi(\kappa)\in \im(\mathbb{P}(\kappa))$ and $\phi(\kappa)\in \ker(\mathbb{P}(\kappa))$. If $\phi(\kappa)\in \ker(\mathbb{P}(\kappa))$, then
$(I_d-\mathbb{P}(\kappa))\phi(\kappa)=\phi(\kappa)$ and
\begin{align*}
|\phi(n)|=|\Phi(n,\kappa)\phi(\kappa)|=|\Phi(n,\kappa)(I_d-\mathbb{P}(\kappa))\phi(\kappa)|\leqslant K\alpha^{\kappa-n}|(I_d-\mathbb{P}(\kappa))\phi(\kappa)|\xrightarrow[n\rightarrow-\infty]{}0.
\end{align*}
If, on the other hand, $\phi(\kappa)\in \im(\mathbb{P}(\kappa))$, then $\mathbb{P}(\kappa)\phi(\kappa)=\phi(\kappa)$ and $\phi(n)\in \Phi(\kappa,n)^{-1}(\phi(\kappa))$ for all $n\leqslant \kappa$. Consequently, we get by Lemma \ref{ED-inverse}
\begin{equation*}
|\phi(n)|\geqslant |||\Phi(\kappa,n)^{-1}(\mathbb{P}(\kappa)\phi(\kappa))|||\geqslant (1/K)(1/\alpha)^{\kappa-n}|\mathbb{P}(\kappa)\phi(\kappa)|
\xrightarrow[n\rightarrow-\infty]{}\infty,
\end{equation*}
which contradicts the fact that $\phi\in \ell^-_{0\kappa}(\R^d)$. Consequently, we have shown that if $\phi\in \ker(L^-_{\kappa})$, then $\phi(\kappa)\in \ker(\mathbb{P}(\kappa))$. Finally, it follows that
\begin{align*}
\ker(\mathbb{P}(\kappa))\ni x\mapsto \phi(n):=
\mathbbm{1}_{(-\infty,\kappa]}(n)\Phi(n,\kappa)x\in \ker(L^-_{\kappa})
\end{align*}
is an isomorphism, which of course implies that $\dim \ker(L_{\kappa}^-)=\dim \ker(\mathbb{P}(\kappa))$.
This completes the proof.
\end{proof}
\noindent
Now we shall extend the concept of an ED to the case of linear parametrized discrete vector fields $\mathbb{A}\colon \Lambda\times\Z\to \mathcal{L}(\R^d)$,
keeping in mind that $\mathbb{A}$ satisfies $(L0)$.

\begin{defi}\label{ED-Lambda}
A linear parametrized discrete vector field $\mathbb{A}\colon \Lambda\times\Z\to \mathcal{L}(\R^d)$ is said to possess an ED on $\mathbb{I}$
if there exists a parametrized projector $\mathbb{P}\colon \Lambda\times\Z\to \mathcal{L}(\R^d)$, $K\geqslant 1$ and $\alpha\in (0,1)$ such that for all $\lambda\in\Lambda$ one has:
\begin{itemize}
\item $\mathbb{P}(\lambda)\colon \Z\to \mathcal{L}(\R^d)$ is invariant and regular with respect to $\mathbb{A}(\lambda)\colon \Z\to \mathcal{L}(\R^d)$,
\item the pair $(\mathbb{A}(\lambda),\mathbb{P}(\lambda))$ has an ED on $\mathbb{I}$ $($with respect to $K$ and $\alpha)$.
\end{itemize}
\end{defi}
\noindent
The next lemma explains why we assume that the projector $\mathbb{P}\colon \Lambda\times\Z\to \mathcal{L}(\R^d)$
 is continuous with respect to the first variable.

\begin{lemma}\label{lemma-vector-bundles-ker-im}
Assume that $\mathbb{A}\colon \Lambda\times\Z\to \mathcal{L}(\R^d)$ has an ED on $\Z_{\kappa}^{\pm}$ with a projector $\mathbb{P}^{\pm}\colon \Lambda\times\Z_{\kappa}^{\pm}\to \mathcal{L}(\R^d)$. Then
\begin{align}\label{vector-bundels-ker-im}
\begin{split}
\Ker \mathbb{P}^{\pm}(\kappa)&:=\{(\lambda,x)\in \Lambda\times \R^d\mid x\in \ker(\mathbb{P}^{\pm}(\lambda,\kappa)) \},\\
\IM \mathbb{P}^{\pm}(\kappa)&:=\{(\lambda,x)\in \Lambda\times \R^d\mid x\in \im(\mathbb{P}^{\pm}(\lambda,\kappa))\}
\end{split}
\end{align}
are vector bundles over $\Lambda$ such that $\Ker \mathbb{P}^{\pm}(\kappa) \oplus \IM \mathbb{P}^{\pm}(\kappa)=\Theta(\mathbb{R}^d)$.
\end{lemma}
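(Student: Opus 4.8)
The plan is to deduce this from the general fact that a continuous family of idempotents on a fixed finite-dimensional vector space splits the trivial bundle into two complementary subbundles. For a fixed choice of sign, write $P(\lambda):=\mathbb{P}^{\pm}(\lambda,\kappa)\in\mathcal{L}(\R^d)$; since $\kappa\in\Z_{\kappa}^{\pm}$, the defining properties of a parametrized projector give that $P\colon\Lambda\to\mathcal{L}(\R^d)$ is continuous and $P(\lambda)^2=P(\lambda)$ for all $\lambda\in\Lambda$. With this notation the sets $\IM\mathbb{P}^{\pm}(\kappa)$ and $\Ker\mathbb{P}^{\pm}(\kappa)$ from \eqref{vector-bundels-ker-im} are $\IM P:=\{(\lambda,x)\in\Lambda\times\R^d\mid x\in\im P(\lambda)\}$ and $\Ker P:=\{(\lambda,x)\in\Lambda\times\R^d\mid x\in\ker P(\lambda)\}$, so it suffices to prove that for \emph{any} continuous idempotent-valued map $P\colon\Lambda\to\mathcal{L}(\R^d)$ the sets $\IM P$ and $\Ker P$ are vector bundles over $\Lambda$ with $\IM P\oplus\Ker P=\Theta(\R^d)$; the lemma then follows by applying this to $P=\mathbb{P}^{+}(\cdot,\kappa)$ and $P=\mathbb{P}^{-}(\cdot,\kappa)$.

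First I would establish local triviality. Fix $\lambda_{0}\in\Lambda$ and set
\[Q(\lambda):=P(\lambda)P(\lambda_{0})+(I_d-P(\lambda))(I_d-P(\lambda_{0})),\qquad\lambda\in\Lambda.\]
Then $Q$ is continuous with $Q(\lambda_{0})=I_d$, so since $GL(d,\R)$ is open there is an open neighbourhood $U$ of $\lambda_{0}$ with $Q(\lambda)\in GL(d,\R)$ for every $\lambda\in U$. Using only $P(\lambda)^2=P(\lambda)$ and $P(\lambda_{0})^2=P(\lambda_{0})$ one checks that $P(\lambda)Q(\lambda)=P(\lambda)P(\lambda_{0})=Q(\lambda)P(\lambda_{0})$, hence $Q(\lambda)^{-1}P(\lambda)Q(\lambda)=P(\lambda_{0})$ for $\lambda\in U$. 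From this conjugation identity it follows that $Q(\lambda)$ maps $\im P(\lambda_{0})$ onto $\im P(\lambda)$ and $\ker P(\lambda_{0})$ onto $\ker P(\lambda)$, so the maps
\[U\times\im P(\lambda_{0})\ni(\lambda,x)\mapsto(\lambda,Q(\lambda)x)\in(\IM P)|_{U},\qquad U\times\ker P(\lambda_{0})\ni(\lambda,x)\mapsto(\lambda,Q(\lambda)x)\in(\Ker P)|_{U}\]
are homeomorphisms, with continuous inverses $(\lambda,v)\mapsto(\lambda,Q(\lambda)^{-1}v)$. These are the desired local trivializations, with model fibres the fixed subspaces $\im P(\lambda_{0})$ and $\ker P(\lambda_{0})$; in particular $\dim\im P(\lambda)$ is locally constant, hence constant because $\Lambda$ is path-connected, so $\IM P$ and $\Ker P$ are vector bundles of finite rank.

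Finally, since $\R^{d}=\im P(\lambda)\oplus\ker P(\lambda)$ for every $\lambda\in\Lambda$, the bundle morphism $\IM P\oplus\Ker P\to\Theta(\R^{d})$ sending $((\lambda,x),(\lambda,y))$ to $(\lambda,x+y)$ is a fibrewise isomorphism whose inverse $(\lambda,z)\mapsto((\lambda,P(\lambda)z),(\lambda,(I_d-P(\lambda))z))$ is continuous; thus it is an isomorphism of vector bundles and $\IM P\oplus\Ker P=\Theta(\R^{d})$. The main obstacle will be verifying the conjugation identity $Q(\lambda)^{-1}P(\lambda)Q(\lambda)=P(\lambda_{0})$ together with the fact that $Q(\lambda)$ carries the fixed subspaces $\im P(\lambda_{0})$, $\ker P(\lambda_{0})$ onto the moving subspaces $\im P(\lambda)$, $\ker P(\lambda)$; granting this, the remaining verifications of continuity and of the direct-sum decomposition are routine.
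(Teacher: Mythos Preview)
Your proof is correct and follows essentially the same route as the paper. The paper's argument is simply to invoke \cite[Chapter~1, Lemma~4.10]{Kato} for the constancy of $\dim\im(\mathbb{P}^{\pm}(\lambda,\kappa))$ and \cite[Prop.~14.2.3]{Dieck} for the resulting vector-bundle structure; your construction of $Q(\lambda)=P(\lambda)P(\lambda_0)+(I_d-P(\lambda))(I_d-P(\lambda_0))$ and the conjugation identity $Q(\lambda)^{-1}P(\lambda)Q(\lambda)=P(\lambda_0)$ is precisely the standard self-contained proof of those cited facts, so the two arguments coincide in substance.
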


\begin{proof}
Since $\mathbb{P}^{\pm}_{\kappa}\colon \Lambda\to \mathcal{L}(\R^d)$ are continuous and $\Lambda$ is connected, \cite[Chapter 1,~Lemma~4.10]{Kato}
implies that the functions $\Lambda\ni \lambda\mapsto \dim\im(\mathbb{P}^{\pm}(\lambda,\kappa))$
 are constant. Consequently, the assertion follows from \cite[Prop.~14.2.3,~p.~337]{Dieck}.
\end{proof}


It follows from the previous lemma and Corollary \ref{Fredholm-L+} that

\begin{align}\label{stablebundle}
\left\{(\lambda,x)\in \Lambda\times\R^d \;\Big|\;\lim\limits_{n\to\infty}\Phi_{\lambda}(n,\kappa)x=0 \right\}
\end{align}
is a vector bundle over $\Lambda$ which is isomorphic to $\IM \mathbb{P}^{+}(\kappa)$. Moreover, we obtain from Proposition \ref{Fredholm-L2} that

\begin{align}\label{unstablebundle}
\left\{(\lambda,x)\in \Lambda\times\R^d\;|\; x\in \ker \mathbb{P}^-(\lambda,\kappa),\;\lim\limits_{n\to-\infty}\Phi_{\lambda}(n,\kappa)x=0\right\}
\end{align}
is a vector bundle over $\Lambda$ which is isomorphic to $\Ker \mathbb{P}^-(\kappa)$.

\begin{defi}
Let $\mathbb{A}\colon \Lambda\times\Z\to \mathcal{L}(\R^d)$ be as in Definition $\ref{ED-Lambda}$. If $\mathbb{I}=\Z^+_{\kappa}$, then the stable vector bundle $\mathbb{E}^s$ is defined by \eqref{stablebundle}. If $\mathbb{I}=\Z^-_{\kappa}$, then the unstable vector bundle $\mathbb{E}^u$ is defined by \eqref{unstablebundle}.
\end{defi}
\noindent
Now we recall some concepts from Fredholm theory. A bounded operator $T\colon X\rightarrow Y$ acting between Banach spaces $X$, $Y$ is called \textit{Fredholm} if it has finite dimensional kernel and cokernel. The \textit{Fredholm index} of $T$ is the integer
\begin{align}\label{Fred-index}
\ind(T)=\dim\ker(T)-\dim\coker(T).
\end{align}
Note that if $\dim X<\infty$ and $\dim Y<\infty$, then
\begin{align}\label{Fred-index-finite}
\ind(T)=\dim X-\dim Y.
\end{align}
The subspace of $\mathcal{L}(X,Y)$ consisting of all Fredholm operators will be denoted by $\Phi(X,Y)$. Moreover, $\Phi_k(X,Y)$, $k\in\mathbb{Z}$, stands for the subset of all operators in $\Phi(X,Y)$ having index $k$.\\
In this paper we need the concept of the \textit{index bundle} for families $L\colon \Lambda\rightarrow\Phi(X,Y)$ of Fredholm operators parametrized by a compact topological space $\Lambda$ that generalizes the integral Fredholm index \eqref{Fred-index} to families. Given a family $L\colon \Lambda\to \Phi(X,Y)$, we can define a map $\mathbf{L}\colon \Lambda\times X\to Y$ by $\mathbf{L}(\lambda,x)=L(\lambda)x$. By a composition of two families $L\colon \Lambda\to \Phi(X,Y)$ and $S\colon \Lambda\to \Phi(Y,Z)$, denoted by $S\diamond L$, we shall mean the family $S\diamond L\colon \Lambda\to \Phi(X,Z)$ defined by
$(S\diamond L)(\lambda)x:= (\mathbf{S}\circ (\pr_{\Lambda},\mathbf{L}))(\lambda,x)$,
where $\mathbf{S}$ and $\mathbf{L}$ are induced as above by $S$ and $L$, respectively, and $\pr_{\Lambda}\colon \Lambda\times X\to\Lambda$ is the projection.\\
Let us now recall the construction of the \textit{index bundle} for families $L\colon \Lambda\rightarrow\Phi(X,Y)$ of Fredholm operators (cf. e.g. \cite{KTheoryAtiyah}, \cite{indbundleIch}). By the compactness of $\Lambda$, there is a finite dimensional subspace $V\subset Y$ such that
\begin{align}\label{subspace}
\im(L_\lambda)+V=Y,\quad\lambda\in\Lambda,
\end{align}
i.e. $V$ is transversal to the images of $L_{\lambda}$. As $V$ is finite dimensional, there is a projection $P$ onto $V$, and we obtain a family of exact sequences
\begin{align*}
X\xrightarrow{L_\lambda}Y\xrightarrow{I_Y-P}\im(I_Y-P).
\end{align*}
This yields a vector bundle $E(L,V)$ consisting of the union of the kernels of the maps $(I_Y-P)\circ L_\lambda$, $\lambda\in\Lambda$ (cf. \cite[\S III.3]{Lang}), and the fibres of $E(L,V)$ are given by $L_{\lambda}^{-1}(V)$. Now, if $\Theta(V)$ stands for the product bundle $\Lambda\times V$, then we obtain a $KO$-theory class
\begin{align}\label{defiindbund}
\ind(L):=[E(L,V)]-[\Theta(V)]\in KO(\Lambda),
\end{align}
which is called the \textit{index bundle} of $L$. It is easy to check that if $\Lambda$ is connected, then
\begin{align*}
\dim E(L,V)=\dim(V)+\ind(L_\lambda),\quad\lambda\in\Lambda.
\end{align*}
Consequently, we see that
\begin{itemize}
\item
$\ind(L)\in\widetilde{KO}(\Lambda)$ if and only if the operators $L_\lambda$ are Fredholm of index $0$,
\item if $\Lambda=\{\lambda_0\}$ is a singleton, then

\[\ind(L)=\dim(E(L,V))-\dim(V)=\ind(L_{\lambda_0})\in KO(\Lambda)\simeq\mathbb{Z},\]
which implies that the definitions \eqref{defiindbund} and \eqref{Fred-index} coincide in this case.
\end{itemize}
Let us recall the following properties of the index bundle (cf. \cite{indbundleIch}):
\begin{enumerate}
\item[(1)] (Normalization) If $L_\lambda$ is invertible for all $\lambda\in\Lambda$, then $\ind(L)=0$.
\item[(2)] (Functoriality) If $f\colon \Lambda'\rightarrow\Lambda$ is a continuous map between compact spaces and $L\colon\Lambda\to \Phi(X,Y)$ is a family of Fredholm operators, then $f^\ast L\colon\Lambda'\rightarrow\Phi(X,Y)$ defined by $(f^\ast L)(\lambda)=L(f(\lambda))$, $\lambda\in \Lambda'$, is a family of Fredholm operators and $\ind(f^\ast L)=f^\ast(\ind(L))$, where $f^{\ast}\colon KO(\Lambda)\to KO(\Lambda')$ is the group homomorphism induced by $f$.
\item[(3)] (Homotopy invariance) If $H:[0,1]\times\Lambda\rightarrow\Phi(X,Y)$ is a homotopy of Fredholm operators, then $$\ind(H(0,\cdot))=\ind(H(1,\cdot))\in KO(\Lambda).$$
\item[(4)] (Compact perturbation) If $K\colon\Lambda\rightarrow\mathcal{L}(X,Y)$ is a family of compact operators, then
$$\ind(L)=\ind(L+K)\in KO(\Lambda).$$
\item[(5)] (Logarithmic property) If $S\colon\Lambda\rightarrow\Phi(Y,Z)$ and $L\colon\Lambda\rightarrow\Phi(X,Y)$ are two families of Fredholm operators, then $$\ind(S\diamond L)=\ind(S)+\ind(L)\in KO(\Lambda).$$
\item[(6)] (Additivity) If $L\colon\Lambda\rightarrow\Phi(X,Y)$ and $\widetilde{L}\colon\Lambda\rightarrow\Phi(\widetilde{X},\widetilde{Y})$ are two families of Fredholm operators, then
$$\ind(L\oplus \widetilde{L})=\ind(L)+\ind(\widetilde{L})\in KO(\Lambda)\in KO(\Lambda).$$
\item[(7)] (Finite-dimensional property) If $L\colon \Lambda\to \mathcal{L}(X,Y)$ with $\dim(X)=p$ and $\dim(Y)=q$, then

\begin{align}\label{finite-dim}
\ind(L)=[\Theta(\mathbb{R}^p)]-[\Theta(\mathbb{R}^q)].
\end{align}

\end{enumerate}
\noindent
Let us emphasize that the following result is well-known (comp. \cite{Christian10}). However, we present a new proof, which is essential for the argument of Theorem \ref{ind-bundle-formula} below.

\begin{lemma}\label{fredholm}
If $\mathbb{A}\colon \Z\to \mathcal{L}(\R^d)$ admits an ED both on $\Z^+_{\overline{\kappa}}$ and on $\Z^-_{\underline{\kappa}}$ with respective $($regular$)$ projectors $\mathbb{P}^+\colon \Z^+_{\overline{\kappa}}\to \mathcal{L}(\R^d)$ and
$\mathbb{P}^-\colon\Z^-_{\underline{\kappa}}\to \mathcal{L}(\R^d)$, for some $\underline{\kappa}< 0<\overline{\kappa}$, then the linear operator $L:=\mathbb{S}_l-\mathcal{N}_{\mathbb{A}}\colon \ell_0(\R^d)\to \ell_0(\R^d)$ is Fredholm and
\begin{equation*}
\ind(L)=\dim \im(\mathbb{P}^+(\overline{\kappa}))-\dim\im (\mathbb{P}^-(\underline{\kappa}))\in \Z.
\end{equation*}
\end{lemma}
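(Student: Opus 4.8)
The plan is to reduce the full-line operator $L=\mathbb{S}_l-\mathcal{N}_{\mathbb{A}}$ (which is bounded on $\ell_0(\R^d)$ since $\sup_{n}\|\mathbb{A}_n\|<\infty$) to the two half-line operators of Corollary \ref{Fredholm-L+} and Proposition \ref{Fredholm-L2} by cutting $\Z$ at the two points $\underline{\kappa}$ and $\overline{\kappa}$. First I record, using the bounded multiplication operators $\phi\mapsto\mathbbm{1}_S\phi$ (restriction to a subset $S\subseteq\Z$), the two (topological) direct sum decompositions
\[
\ell_0(\R^d)=\ell_{0\underline{\kappa}}^-(\R^d)\oplus F\oplus\ell_{0\overline{\kappa}}^+(\R^d)=\ell_{0\,\underline{\kappa}-1}^-(\R^d)\oplus F'\oplus\ell_{0\overline{\kappa}}^+(\R^d),
\]
where $F$ is the finite-dimensional subspace of sequences supported on $\{\underline{\kappa}+1,\dots,\overline{\kappa}-1\}$, so $\dim F=d(\overline{\kappa}-\underline{\kappa}-1)$, and $F'$ is the one supported on $\{\underline{\kappa},\dots,\overline{\kappa}-1\}$, so $\dim F'=d(\overline{\kappa}-\underline{\kappa})$. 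Since $(L\phi)(n)=\phi(n+1)-\mathbb{A}_n\phi(n)$ couples only two consecutive indices, with respect to these decompositions $L$ becomes a block operator whose only nonzero entries are the three diagonal blocks together with two entries in the middle row: one sending $\ell_{0\underline{\kappa}}^-(\R^d)$ into $F'$ by carrying $-\mathbb{A}_{\underline{\kappa}}\phi(\underline{\kappa})$ into the output slot $n=\underline{\kappa}$, and one sending $\ell_{0\overline{\kappa}}^+(\R^d)$ into $F'$ by carrying $\phi(\overline{\kappa})$ into the output slot $n=\overline{\kappa}-1$.

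Next I identify the three diagonal blocks. The lower-right block is precisely the operator $L_{\overline{\kappa}}^+$ of Corollary \ref{Fredholm-L+}, which applies since an ED on $\Z_{\overline{\kappa}}^+$ is part of the hypothesis; hence it is surjective with $\ker(L_{\overline{\kappa}}^+)\cong\im(\mathbb{P}^+(\overline{\kappa}))$, in particular Fredholm of index $\dim\im(\mathbb{P}^+(\overline{\kappa}))$. The upper-left block, $\ell_{0\underline{\kappa}}^-(\R^d)\to\ell_{0\,\underline{\kappa}-1}^-(\R^d)$, $\phi\mapsto(\phi(n+1)-\mathbb{A}_n\phi(n))_{n\leqslant\underline{\kappa}-1}$, is the operator treated in Proposition \ref{Fredholm-L2}: the right inverse $M$ constructed there maps into $\ell_{0\underline{\kappa}}^-(\R^d)$, and the kernel analysis via Lemma \ref{ED-inverse} forces $\phi(\underline{\kappa})\in\ker(\mathbb{P}^-(\underline{\kappa}))$, so this block is surjective with kernel isomorphic to $\ker(\mathbb{P}^-(\underline{\kappa}))$, hence Fredholm of index $d-\dim\im(\mathbb{P}^-(\underline{\kappa}))$. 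The middle block is a linear map between the finite-dimensional spaces $F$ and $F'$, so by \eqref{Fred-index-finite} it is Fredholm of index $\dim F-\dim F'=-d$.

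Finally I assemble the pieces. Each of the two off-diagonal entries factors through the single evaluation $\phi\mapsto\phi(\underline{\kappa})\in\R^d$, respectively $\phi\mapsto\phi(\overline{\kappa})\in\R^d$, hence has rank at most $d$; consequently $L$ differs by a finite-rank, in particular compact, operator from the block-diagonal operator $D$ which is the direct sum of the three diagonal blocks. By the compact-perturbation invariance of the Fredholm index and additivity of the index under direct sums, $L$ is Fredholm and
\[
\ind(L)=\ind(D)=\bigl(d-\dim\im(\mathbb{P}^-(\underline{\kappa}))\bigr)+(-d)+\dim\im(\mathbb{P}^+(\overline{\kappa}))=\dim\im(\mathbb{P}^+(\overline{\kappa}))-\dim\im(\mathbb{P}^-(\underline{\kappa})),
\]
which is the asserted formula.

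The step I expect to be most delicate is the bookkeeping of this block decomposition. The cut must be placed exactly at $\underline{\kappa}$ and $\overline{\kappa}$ — not, say, at $0$ — because an ED on a half-line need not extend across the finite ``gap'' $\{\underline{\kappa}+1,\dots,\overline{\kappa}-1\}$; and one must track carefully which output coordinate lands in which summand so that the diagonal blocks are genuinely the operators of Corollary \ref{Fredholm-L+} and Proposition \ref{Fredholm-L2} and the remainder is genuinely finite-rank. The exponential-dichotomy estimates themselves (the inequalities \eqref{ED1}--\eqref{ED2}) enter this proof only indirectly, through those two results and Lemma \ref{ED-inverse}.
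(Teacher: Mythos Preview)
Your argument is correct. The block structure you describe is accurate: with the disjoint-support splittings of domain and codomain, $L$ has exactly the three diagonal blocks you name plus two rank-$\leq d$ off-diagonal maps into $F'$, so $L$ is a finite-rank perturbation of the block-diagonal operator, and the index adds up as claimed.

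It is, however, a genuinely different packaging from the paper's. The paper does \emph{not} use a disjoint-support decomposition; instead it introduces an overlapping one via the injection
\[
J\phi=(\mathbbm{1}_{(-\infty,\underline{\kappa}]}\phi,\ \mathbbm{1}_{[\underline{\kappa},\overline{\kappa}]}\phi,\ \mathbbm{1}_{[\overline{\kappa},\infty)}\phi)
\]
into $\ell_{0\underline{\kappa}}^-(\R^d)\oplus\ell_{\underline{\kappa}\overline{\kappa}}(\R^d)\oplus\ell_{0\overline{\kappa}}^+(\R^d)$, and then factors $L=I\circ(L^-_{\underline{\kappa}}\oplus L^c\oplus L^+_{\overline{\kappa}})\circ J$ exactly, with no perturbation term. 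The bookkeeping shifts: their middle block $L^c$ acts between spaces of dimensions $(\overline{\kappa}-\underline{\kappa}+1)d$ and $(\overline{\kappa}-\underline{\kappa})d$ and contributes index $+d$, while the ``overlap'' is paid for by $\ind(J)=-2d$; your middle block contributes $-d$ and the overlap is absorbed into the finite-rank perturbation. Both routes feed the same half-line results (Corollary~\ref{Fredholm-L+} and Proposition~\ref{Fredholm-L2}) into the final sum. The paper's exact factorisation is chosen deliberately because it transports verbatim to the parametrised setting of Theorem~\ref{ind-bundle-formula} via the logarithmic property of the index bundle; your compact-perturbation argument would also transport (property~(4) of the index bundle), but the diagram there would look different.
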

\begin{proof}
We first note that
\begin{align*}
\ell_0(\R^d)=\ell_{0\underline{\kappa}-1}^-(\R^d)\oplus \ell_{\underline{\kappa}\overline{\kappa}-1}(\R^d)\oplus \ell_{0\overline{\kappa}}^+(\R^d)
\end{align*}
and consider the commutative diagram

\begin{align}\label{diagram-3-L}
\begin{split}
\xymatrix@C=65pt@R=20pt{ \ell_0(\R^d)\ar[r]^-{L} \ar[d]_{J}& \ell_0(\R^d)
\\
 \ell_{0\underline{\kappa}}^-(\R^d)\oplus \ell_{\underline{\kappa}\overline{\kappa}}(\R^d)\oplus \ell_{0\overline{\kappa}}^+(\R^d)
 \ar[r]^-{L^-_{\underline{\kappa}}\oplus L^{c}\oplus L^+_{\overline{\kappa}} } & \ar[u]_{I}
  \ell_{0\underline{\kappa}-1}^-(\R^d)\oplus \ell_{\underline{\kappa}\overline{\kappa}-1}(\R^d)\oplus \ell_{0\overline{\kappa}}^+(\R^d),
 }
 \end{split}
\end{align}
where $I(\phi,\psi,\varphi)=\phi+\psi+\varphi$ and $J(\phi)=(\mathbbm{1}_{\Z\cap(-\infty,\underline{\kappa}]}\phi,\mathbbm{1}_{\Z\cap [\underline{\kappa},\overline{\kappa}]}\phi,\mathbbm{1}_{\Z\cap [\overline{\kappa},\infty)}\phi)$, i.e.,
\begin{align*}
J\phi=
(...\phi(\underline{\kappa}-1),\phi(\underline{\kappa}),\phi(\underline{\kappa}),\phi(\underline{\kappa}+1),\ldots,\phi(\overline{\kappa}),\phi(\overline{\kappa}),\phi(\overline{\kappa}+1),...)
\end{align*}
with
\begin{align*}
& L^{-}_{\underline{\kappa}}\colon  \ell_{0\underline{\kappa}}^-(\R^d)\to \ell_{0\underline{\kappa}-1}^-(\R^d),& \;L^{-}_{\underline{\kappa}}\phi&=\mathbbm{1}_{\Z\cap(-\infty,\underline{\kappa}-1]}(\mathbb{S}_l- \mathcal{N}_{\mathbb{A}})\phi,\\
& L^{c}\colon  \ell_{\underline{\kappa}\overline{\kappa}}(\R^d)\to \ell_{\underline{\kappa}\overline{\kappa}-1}(\R^d),&  L^{c}\phi&=\mathbbm{1}_{\Z\cap [\underline{\kappa},\overline{\kappa}]}(\mathbb{S}_l-\mathcal{N}_{\mathbb{A}})\phi,\\
& L^{+}_{\overline{\kappa}}\colon  \ell_{0\overline{\kappa}}^+(\R^d)\to \ell_{0\overline{\kappa}}^+(\R^d),& L^{+}_{\overline{\kappa}}\phi&=\mathbbm{1}_{\Z\cap[\overline{\kappa},+\infty)}(\mathbb{S}_l- \mathcal{N}_{\mathbb{A}})\phi.
\end{align*}
Clearly, $I$ is an isomorphism, and hence $I$ is Fredholm with $\ind(I)=0$. Moreover, $J$ is injective with
$\IM(J)=\Ker(\mathcal{R})$,
where $\mathcal{R}\colon  \ell_{0\underline{\kappa}}^-(\R^d)\oplus \ell_{\underline{\kappa}\overline{\kappa}}(\R^d)\oplus \ell_{0\overline{\kappa}}^+(\R^d)
\to \R^d\times \R^d$ is given by
\begin{align}\label{the-map-R}
\mathcal{R}(\phi,\psi,\varphi)=(\phi(\underline{\kappa})-\psi(\underline{\kappa}),\psi(\overline{\kappa})-\varphi(\overline{\kappa})),
\end{align}
for $\phi\in \ell_{0\underline{\kappa}}^-(\R^d)$, $\psi\in \ell_{\underline{\kappa}\overline{\kappa}}(\R^d)$ and $\varphi\in \ell_{0\overline{\kappa}}^+(\R^d)$.
Since $\IM(\mathcal{R})=\R^d\times\R^d$, it follows that
\begin{align*}
\ell_{0\underline{\kappa}}^-(\R^d)\oplus \ell_{\underline{\kappa}\overline{\kappa}}(\R^d)\oplus \ell_{0\overline{\kappa}}^+(\R^d)/\Ker(\mathcal{R})\simeq \R^d\times \R^d.
\end{align*}
Hence there exists a finite dimensional subspace $V\subset \ell_{0\underline{\kappa}}^-(\R^d)\oplus \ell_{\underline{\kappa}\overline{\kappa}}(\R^d)\oplus \ell_{0\overline{\kappa}}^+(\R^d)$ such that
\begin{align}
\ell_{0\underline{\kappa}}^-(\R^d)\oplus \ell_{\underline{\kappa}\overline{\kappa}}(\R^d)\oplus \ell_{0\overline{\kappa}}^+(\R^d)=\IM(J)\oplus V \text{ and }\dim V=2d,
\end{align}
which implies that $\Coker(J)\simeq \R^{2d}$. Consequently, $J$ is a Fredholm operator with $\ind(J)=-2d$.
Moreover, Corollary \ref{Fredholm-L+} and Proposition \ref{Fredholm-L2} imply that $L^+_{\overline{\kappa}}$ and $L^-_{\underline{\kappa}}$ are Fredholm maps with
\begin{align*}
\ind(L^+_{\overline{\kappa}})&=\dim \ker(L^+_{\overline{\kappa}})-\dim\coker(L^+_{\overline{\kappa}})=\dim \ker(L^+_{\overline{\kappa}})=\dim \im(\mathbb{P}(\overline{\kappa})),\\
\ind(L^-_{\underline{\kappa}})&=\dim \ker(L^-_{\underline{\kappa}})-\dim\coker(L^-_{\underline{\kappa}})=\dim \ker(L^-_{\underline{\kappa}})=\dim \ker(\mathbb{P}(\underline{\kappa})).
\end{align*}
Finally, for $L^c \colon  \ell_{\underline{\kappa}\overline{\kappa}}(\R^d)\to \ell_{\underline{\kappa}\overline{\kappa}-1}(\R^d)$, note that
\begin{align}\label{dim-spaces-L}
\ell_{\underline{\kappa}\overline{\kappa}}(\R^d)\simeq \prod\limits_{i=1}^{\overline{\kappa}-\underline{\kappa}+1}\R^d \text{ and }
\ell_{\underline{\kappa}\overline{\kappa}-1}(\R^d)\simeq \prod\limits_{i=1}^{\overline{\kappa}-\underline{\kappa}}\R^d.
\end{align}
Consequently, taking into account \eqref{Fred-index-finite} and \eqref{dim-spaces-L}, we infer that
\begin{align*}
\ind(L^c)=\dim( \ell_{\underline{\kappa}\overline{\kappa}}(\R^d))-\dim(\ell_{\underline{\kappa}\overline{\kappa}-1}(\R^d))=
(\overline{\kappa}-\underline{\kappa}+1)d-(\overline{\kappa}-\underline{\kappa})d=d.
\end{align*}
As the composition of Fredholm operators is Fredholm and
$L=I\circ (L^-_{\underline{\kappa}}\oplus L^c\oplus L^+_{\overline{\kappa}})\circ J$, it follows that $L$ is Fredholm and
\begin{align*}
\ind(L)&=\ind(I\circ (L^-_{\underline{\kappa}}\oplus L^c\oplus L^+_{\overline{\kappa}})\circ J)
=\ind(I)+\ind(L^-_{\underline{\kappa}}\oplus L^c\oplus L^+_{\overline{\kappa}})+\ind(J)\\
&=0+\ind(L^-_{\underline{\kappa}}\oplus L^c\oplus L^+_{\overline{\kappa}})-2d
=\ind(L^-_{\underline{\kappa}})+ \ind(L^c)+ \ind(L^+_{\overline{\kappa}})-2d\\
&=\dim \ker(\mathbb{P}^-(\underline{\kappa}))+d+ \dim \im(\mathbb{P}^+(\overline{\kappa}))-2d
=\dim \im(\mathbb{P}^+(\overline{\kappa}))-(d-\dim \ker(\mathbb{P}^-(\underline{\kappa})))\\
&=\dim \im(\mathbb{P}^+(\overline{\kappa}))-\dim \im(\mathbb{P}^-(\underline{\kappa})),
\end{align*}
where we have used the rank-nullity theorem in the final equality. This completes the proof.
\end{proof}

\begin{lemma}[\cite{Poetzscheb}]\label{iso}
$1\not\in \Sigma(\mathbb{A})\Longleftrightarrow L_{\mathbb{A}}\colon \ell_0(\R^d)\to \ell_0(\R^d)$ is an isomorphism.
\end{lemma}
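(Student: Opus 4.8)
The plan is to combine the spectral characterisation of exponential dichotomies recorded in Theorem \ref{ED-ES} with the two computations already carried out for Proposition \ref{Fredholm-L} and with the Silverman--Toeplitz Lemma \ref{Sil-Toep}. By Theorem \ref{ED-ES}$(1)$ the condition $1\notin\Sigma(\mathbb{A})$ is equivalent to $\mathbb{A}$ admitting an ED on all of $\Z$, so it is enough to prove: $\mathbb{A}$ has an ED on $\Z$ if and only if $L_{\mathbb{A}}=\mathbb{S}_l-\mathcal{N}_{\mathbb{A}}\colon\ell_0(\R^d)\to\ell_0(\R^d)$ is an isomorphism.

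For the forward implication, suppose $\mathbb{A}$ has an ED on $\Z$ with regular invariant projector $\mathbb{P}\colon\Z\to\mathcal{L}(\R^d)$. I would set $(M\phi)(n):=\sum_{k\in\Z}\mathbb{G}_{\Phi}(n,k+1)\phi(k)$, with $\mathbb{G}_{\Phi}$ the Green's function \eqref{Green} of $\mathbb{P}$. The estimates \eqref{ED1} and \eqref{ED3} show, just as in \eqref{estimation}--\eqref{estimation3}, that this series converges absolutely and that $M\colon\ell^{\infty}(\R^d)\to\ell^{\infty}(\R^d)$ is bounded; the telescoping identity of Proposition \ref{Fredholm-L}, now run over all of $\Z$ rather than over $\N$, gives $L_{\mathbb{A}}M=I$, and the argument using \eqref{ED2} and Lemma \ref{ED-inverse} from Propositions \ref{Fredholm-L} and \ref{Fredholm-L2} shows that $\ker L_{\mathbb{A}}=\{0\}$ already on $\ell^{\infty}(\R^d)$, so that $ML_{\mathbb{A}}=I$ as well and $M=L_{\mathbb{A}}^{-1}$ on $\ell^{\infty}(\R^d)$. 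Writing $f(n,k):=\mathbb{G}_{\Phi}(n,k+1)$, the ED estimates verify conditions $(1)$--$(2)$ of Lemma \ref{Sil-Toep}, so $M(\ell_0(\R^d))\subset\ell_0(\R^d)$; since also $L_{\mathbb{A}}(\ell_0(\R^d))\subset\ell_0(\R^d)$, the operator $L_{\mathbb{A}}$ restricts to an isomorphism of $\ell_0(\R^d)$. Equivalently, Lemma \ref{fredholm} applied with $\mathbb{P}^{\pm}$ the restrictions of $\mathbb{P}$ gives that $L_{\mathbb{A}}$ is Fredholm of index $\dim\im\mathbb{P}(\overline{\kappa})-\dim\im\mathbb{P}(\underline{\kappa})=0$, since regularity makes $\dim\im\mathbb{P}(n)$ independent of $n$, and one then combines index zero with the triviality of the kernel.

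For the converse, assume $L_{\mathbb{A}}$ is an isomorphism of $\ell_0(\R^d)$. Because $\mathbb{S}_l$ is invertible on $\ell_0(\R^d)$ with inverse the right shift $S_r$, we may factor $L_{\mathbb{A}}=\mathbb{S}_l\circ(I-S_r\circ\mathcal{N}_{\mathbb{A}})$, so $I-S_r\circ\mathcal{N}_{\mathbb{A}}$ is an isomorphism of $\ell_0(\R^d)$; equivalently, for each $\psi\in\ell_0(\R^d)$ there is a unique $\phi\in\ell_0(\R^d)$ with $\mathbb{S}_l\phi-\mathcal{N}_{\mathbb{A}}\phi=\psi$ and $\|\phi\|_{\infty}\leqslant C\|\psi\|_{\infty}$ for a fixed $C$. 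This is precisely the admissibility of the pair $(\ell_0(\R^d),\ell_0(\R^d))$ for \eqref{linear-equation} together with the absence of nontrivial bounded homoclinic solutions, and by P\"otzsche's admissibility characterisation of exponential dichotomies on $\Z$ --- the result behind Theorem \ref{ED-ES}$(2)$, see \cite{Poetzscheb,Poetzschec} --- this forces $\mathbb{A}$ to admit an ED on $\Z$, i.e.\ $1\notin\Sigma(\mathbb{A})$. Concretely, evaluating the solution operator $\psi\mapsto\phi$ on the sequences $\phi_{n,x}$ recovers a dichotomy projector along the lines of \eqref{operator}, and propagating the bound $\|\phi\|_{\infty}\leqslant C\|\psi\|_{\infty}$ along the orbit produces the exponential estimates \eqref{ED1}--\eqref{ED2}.

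The main obstacle I anticipate is precisely this converse step, namely passing from invertibility of $I-S_r\circ\mathcal{N}_{\mathbb{A}}$ on the small space $\ell_0(\R^d)$ to a bona fide exponential dichotomy: index-zero Fredholmness by itself (which is all the forward computation readily exports) does not yield invertibility, so one really needs the admissibility machinery of \cite{Poetzscheb}. An alternative way to organise this step is to observe that admissibility of the pair $(\ell_0(\R^d),\ell_0(\R^d))$ here upgrades to admissibility of $(\ell^{\infty}(\R^d),\ell^{\infty}(\R^d))$, i.e.\ to invertibility of $I-S_r\circ\mathcal{N}_{\mathbb{A}}$ on $\ell^{\infty}(\R^d)$ and hence on $\ell^{\infty}(\C^d)$, so that $1\notin\sigma(S_r\circ\mathcal{N}_{\mathbb{A}})$ and Theorem \ref{ED-ES}$(2)$ applies directly.
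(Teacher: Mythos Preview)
The paper does not give its own proof of this lemma: it is stated with a bare citation to \cite{Poetzscheb} and used as a black box (cf.\ Remark~\ref{rem-iso}). So there is no ``paper's proof'' to match against; what you have written is a reconstruction, and it is largely sound.

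Your forward implication is essentially a full-line version of the half-line arguments in Propositions~\ref{Fredholm-L} and~\ref{Fredholm-L2}, and it works. Two small points. First, Lemma~\ref{Sil-Toep} as stated only asserts $f\ast\phi\in\ell_0^{\pm}$ for $\phi\in\ell_0^{\pm}$, and in its application in Proposition~\ref{Fredholm-L} the kernel $f$ carries an explicit factor $\mathbbm{1}_{\N}(n)$; your $f(n,k)=\mathbb{G}_\Phi(n,k+1)$ has no such cutoff, so you cannot quote the lemma verbatim for the two-sided space $\ell_0(\R^d)$. The fix is routine (split $\phi=\mathbbm{1}_{\Z_0^+}\phi+\mathbbm{1}_{\Z_{-1}^-}\phi$ and run the Silverman--Toeplitz estimate separately towards $+\infty$ and $-\infty$, using the ED decay of $\mathbb{G}_\Phi$ in each direction), but it should be said. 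Second, your alternative via Lemma~\ref{fredholm} is cleaner than you indicate: with a single projector $\mathbb{P}$ on $\Z$ one has $\dim\im\mathbb{P}(\overline\kappa)=\dim\im\mathbb{P}(\underline\kappa)$ by regularity, hence index $0$, and the kernel computation $\im\mathbb{P}(0)\cap\ker\mathbb{P}(0)=\{0\}$ finishes.

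For the converse you correctly isolate the genuine difficulty: passing from invertibility of $L_{\mathbb{A}}$ on $\ell_0(\R^d)$ back to an ED on $\Z$ is exactly the admissibility theorem that the paper is importing from \cite{Poetzscheb}, and your sketch (recover a projector from the solution operator evaluated on the sequences $\phi_{n,x}$, then propagate the norm bound to get \eqref{ED1}--\eqref{ED2}) is the right shape, though it is not a self-contained proof. Your proposed shortcut of ``upgrading'' invertibility from $\ell_0$ to $\ell^\infty$ so as to invoke Theorem~\ref{ED-ES}$(2)$ directly is not automatic and would itself need an argument; it is safer to cite \cite{Poetzscheb} for this step, which is precisely what the paper does.
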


Now we carry over Lemma \ref{fredholm} to the case of parametrized linear vector fields.

\begin{theorem}\label{ind-bundle-formula}
Assume that $\mathbb{A}\colon \Lambda\times\Z\rightarrow\mathcal{L}(\R^d)$ admits an ED
both on $\Z^+_{\overline{\kappa}}$ and on $\Z^-_{\underline{\kappa}}$  with projectors $\mathbb{P}^+\colon \Lambda\times\Z_{\overline{\kappa}}^+\to \mathcal{L}(\R^d)$ and $\mathbb{P}^-\colon \Lambda\times\Z^-_{\underline{\kappa}}\to \mathcal{L}(\R^d)$, where $\underline{\kappa}<0<\overline{\kappa}$.
Then the map $L_\mathbb{A}\colon \Lambda\to \mathcal{L}(\ell_0(\R^d))$ defined by $L_\mathbb{A}(\lambda):=\mathbb{S}_l-\mathcal{N}_{\mathbb{A}(\lambda)}$, $\lambda\in\Lambda$,
has the following properties:
\begin{enumerate}
\item[$(a)$] $L_\mathbb{A}$ is a continuous family of Fredholm operators,
\item[$(b)$]  the index bundle of $L_\mathbb{A}$ is
\begin{equation}\label{index-KO}
\ind(L_\mathbb{A})=[\IM \mathbb{P}^+(\overline{\kappa})]-[\IM \mathbb{P}^-(\underline{\kappa})]\in KO(\Lambda),
\end{equation}
where the bundles on the right hand side are defined in \eqref{vector-bundels-ker-im}.
\end{enumerate}

\end{theorem}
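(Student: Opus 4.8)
The plan is to settle part $(a)$ in a few lines and then obtain part $(b)$ by reading the proof of Lemma \ref{fredholm} as a statement about continuous families and feeding the outcome into the formal properties $(1)$--$(7)$ of the index bundle. For $(a)$, continuity of $\lambda\mapsto L_\mathbb{A}(\lambda)=\mathbb{S}_l-\mathcal{N}_{\mathbb{A}(\lambda)}$ as a map $\Lambda\to\mathcal{L}(\ell_0(\R^d))$ is immediate from Lemma \ref{N-L-continuous} (which applies since $(L0)$ holds and $\mathcal{N}_{\mathbb{A}(\lambda)}$ preserves $\ell_0(\R^d)$), because $\mathbb{S}_l$ is a fixed bounded operator; and for each fixed $\lambda$ the bounded vector field $\mathbb{A}(\lambda)$ has an ED with regular projectors $\mathbb{P}^{+}(\lambda)$ on $\Z^{+}_{\overline{\kappa}}$ and $\mathbb{P}^{-}(\lambda)$ on $\Z^{-}_{\underline{\kappa}}$, so $L_\mathbb{A}(\lambda)$ is Fredholm by Lemma \ref{fredholm}. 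Hence $L_\mathbb{A}$ is a continuous family of Fredholm operators.

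For $(b)$ I would reuse the commutative diagram \eqref{diagram-3-L} from the proof of Lemma \ref{fredholm}, observing that it is natural in $\lambda$: the maps $I$ and $J$ there do not involve $\mathbb{A}$, whereas $L^{-}_{\underline{\kappa}}$, $L^{c}$, $L^{+}_{\overline{\kappa}}$ now become continuous families of Fredholm operators (continuity in $\lambda$ from $(L0)$; Fredholmness fibrewise from Corollary \ref{Fredholm-L+}, Proposition \ref{Fredholm-L2}, and the finite-dimensionality \eqref{dim-spaces-L} of $\ell_{\underline{\kappa}\overline{\kappa}}(\R^d)$). Thus, as families, $L_\mathbb{A}=I\diamond(L^{-}_{\underline{\kappa}}\oplus L^{c}\oplus L^{+}_{\overline{\kappa}})\diamond J$ with $I,J$ regarded as constant families, and the Logarithmic and Additivity properties of the index bundle give
\[
\ind(L_\mathbb{A})=\ind(I)+\ind(L^{-}_{\underline{\kappa}})+\ind(L^{c})+\ind(L^{+}_{\overline{\kappa}})+\ind(J)\in KO(\Lambda).
\]
Here $\ind(I)=0$ by the Normalization property, as $I$ is an isomorphism; $\ind(L^{c})=[\Theta(\R^{d})]$ by the Finite-dimensional property together with \eqref{dim-spaces-L}; and $\ind(J)=-[\Theta(\R^{2d})]$, since $J$ is a constant injective family with cokernel of constant dimension $2d$, so that taking the subspace $V$ in the index-bundle construction to be a fixed complement of $\im(J)$ makes $E(J,V)$ the zero bundle and hence $\ind(J)=[E(J,V)]-[\Theta(V)]=-[\Theta(\R^{2d})]$.

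The only step that is not purely formal is the identification of $\ind(L^{\pm})$ at the level of bundles. For every $\lambda$ the operator $L^{+}_{\overline{\kappa}}(\lambda)$ is surjective by Corollary \ref{Fredholm-L+}, so taking $V=0$ in the index-bundle construction gives $\ind(L^{+}_{\overline{\kappa}})=[E(L^{+}_{\overline{\kappa}},0)]$, the class of the bundle with fibre $\ker L^{+}_{\overline{\kappa}}(\lambda)$ over $\lambda$. By the fibrewise description \eqref{kernel-L+}, evaluation $\phi\mapsto\phi(\overline{\kappa})$ is a fibrewise linear bundle isomorphism from $E(L^{+}_{\overline{\kappa}},0)$ onto the bundle \eqref{stablebundle}, and it varies continuously in $\lambda$ because $\Phi_{\lambda}(n,\overline{\kappa})$ is a finite product of the continuously varying matrices $\mathbb{A}_j(\lambda)$; since \eqref{stablebundle} was already noted to be isomorphic to $\IM\mathbb{P}^{+}(\overline{\kappa})$, this yields $\ind(L^{+}_{\overline{\kappa}})=[\IM\mathbb{P}^{+}(\overline{\kappa})]$. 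In exactly the same way, using the surjectivity and the kernel description in Proposition \ref{Fredholm-L2}, the evaluation $\phi\mapsto\phi(\underline{\kappa})$ identifies $E(L^{-}_{\underline{\kappa}},0)$ with the bundle \eqref{unstablebundle}, which is isomorphic to $\Ker\mathbb{P}^{-}(\underline{\kappa})$, so $\ind(L^{-}_{\underline{\kappa}})=[\Ker\mathbb{P}^{-}(\underline{\kappa})]$.

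Substituting these values and using $[\Theta(\R^{d})]-[\Theta(\R^{2d})]=-[\Theta(\R^{d})]$, we obtain
\[
\ind(L_\mathbb{A})=[\IM\mathbb{P}^{+}(\overline{\kappa})]+[\Ker\mathbb{P}^{-}(\underline{\kappa})]-[\Theta(\R^{d})].
\]
Finally, Lemma \ref{lemma-vector-bundles-ker-im} gives $\Ker\mathbb{P}^{-}(\underline{\kappa})\oplus\IM\mathbb{P}^{-}(\underline{\kappa})=\Theta(\R^{d})$, hence $[\Ker\mathbb{P}^{-}(\underline{\kappa})]-[\Theta(\R^{d})]=-[\IM\mathbb{P}^{-}(\underline{\kappa})]$ in $KO(\Lambda)$, and plugging this in yields the asserted formula \eqref{index-KO}. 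The main obstacle is precisely this bundle-level identification of $\ind(L^{\pm})$ with the projector bundles $\IM\mathbb{P}^{+}(\overline{\kappa})$ and $\Ker\mathbb{P}^{-}(\underline{\kappa})$; everything else is the formal calculus of the index bundle together with careful bookkeeping of the trivial summands.
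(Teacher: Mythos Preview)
Your proof is correct and follows essentially the same route as the paper: both parametrize the commutative diagram from Lemma~\ref{fredholm}, apply the logarithmic, additivity, normalization, and finite-dimensional properties of the index bundle to the five factors $\mathcal{I}$, $L^{-}_{\underline{\kappa}}$, $L^{c}$, $L^{+}_{\overline{\kappa}}$, $\mathcal{J}$, identify the kernel bundles of the surjective families $L^{\pm}$ with $\IM\mathbb{P}^{+}(\overline{\kappa})$ and $\Ker\mathbb{P}^{-}(\underline{\kappa})$ via Corollary~\ref{Fredholm-L+} and Proposition~\ref{Fredholm-L2}, and conclude using $\Ker\mathbb{P}^{-}(\underline{\kappa})\oplus\IM\mathbb{P}^{-}(\underline{\kappa})=\Theta(\R^{d})$. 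One tiny remark: the evaluation map $\phi\mapsto\phi(\overline{\kappa})$ is actually independent of $\lambda$, so your comment about $\Phi_{\lambda}(n,\overline{\kappa})$ being a finite product is unnecessary there (continuity of the bundle morphism is immediate), though it does no harm.
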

\begin{proof}
Note that $(a)$ follows immediately from Lemma \ref{L-Niemytski} and Lemma \ref{fredholm}.
To prove $(b)$ we consider the parametrized version of the diagram \eqref{diagram-3-L}:
\begin{align*}
\xymatrix@C=55pt@R=20pt{ \Lambda\times\ell_0(\R^d)\ar[r]^-{\id} \ar[d]_{\mathbf{J}}& \Lambda\times\ell_0(\R^d) \ar[ddd]^{\mathbf{L}}
\\
 \big(\Lambda\times\ell_{0\underline{\kappa}}^{-}(\R^d)\big)\oplus \big(\Lambda\times\ell_{\underline{\kappa}\overline{\kappa}}(\R^d)\big)\oplus
 (\Lambda\times\ell_{0\overline{\kappa}}^+(\R^d))
 \ar[d]_-{\mathbf{L}^-_{\underline{\kappa}}\oplus \mathbf{L}^{\!c}\oplus \mathbf{L}^+_{\overline{\kappa}} } &  \\
  \ell_{0\underline{\kappa}-1}^{-}(\R^d)\oplus \ell_{\underline{\kappa}\overline{\kappa}-1}(\R^d)\oplus \ell_{0\overline{\kappa}}^+(\R^d)\ar[d]_{I} & \\
  \ell_0(\R^d) & \ar[l]_{\id}\ell_0(\R^d),
 }
\end{align*}
where $I(\phi,\psi,\varphi)=\phi+\psi+\varphi$, $\mathbf{J}(\lambda,\phi)=((\lambda,J^-(\phi)),(\lambda,J^c(\phi)),(\lambda,J^+(\phi)))$ with
$J^-(\phi)=\mathbbm{1}_{\Z\cap (-\infty,\underline{\kappa}]}\phi$, $J^c(\phi)=\mathbbm{1}_{\Z\cap [\underline{\kappa},\overline{\kappa}]}\phi$,
$J^+(\phi)=\mathbbm{1}_{\Z\cap [\overline{\kappa},\infty)}\phi$
and

\begin{align*}
& \mathbf{L}\colon \Lambda\times\ell_0(\R^d)\to \ell_0(\R^d)& &\mathbf{L}(\lambda,\phi)=L(\lambda)\phi,\\
& \mathbf{L}^{-}_{\underline{\kappa}}\colon  \Lambda\times\ell_{0\underline{\kappa}}^-(\R^d)\to \ell_{0\underline{\kappa}-1}^-(\R^d),& \;&\mathbf{L}^{-}_{\underline{\kappa}}(\lambda,\phi)=\mathbbm{1}_{\Z\cap (-\infty,\underline{\kappa}-1]} (\mathbb{S}_l\phi-\mathcal{N}_{\mathbb{A}}(\lambda,\phi)),\\
& \mathbf{L}^{c}\colon \Lambda\times \ell_{\underline{\kappa}\overline{\kappa}}(\R^d)\to \ell_{\underline{\kappa}\overline{\kappa}-1}(\R^d),&  &\mathbf{L}^{c}(\lambda,\phi)=\mathbbm{1}_{\Z\cap [\underline{\kappa},\overline{\kappa}-1]}(\mathbb{S}_l\phi- \mathcal{N}_{\mathbb{A}}(\lambda,\phi)),\\
& \mathbf{L}^{+}_{\overline{\kappa}}\colon  \Lambda\times\ell_{0\overline{\kappa}}^+(\R^d)\to \ell_{0\overline{\kappa}}^+(\R^d),& &\mathbf{L}^{+}_{\overline{\kappa}}(\lambda,\phi)=\mathbbm{1}_{\Z\cap[\overline{\kappa},\infty)}(\mathbb{S}_l\phi- \mathcal{N}_{\mathbb{A}}(\lambda,\phi)).
\end{align*}
Note that $I$ is an isomorphism, while $\mathbf{J}_{\lambda}$, $\lambda\in\Lambda$, is injective with
$\IM(\mathbf{J}_{\lambda})=\Ker(\mathcal{R})$, where $\mathcal{R}$ is given as in \eqref{the-map-R}. From the proof of Lemma \ref{fredholm} it follows that there exists a finite-dimensional subspace $V\subset \ell_{0\underline{\kappa}}^-(\R^d)\oplus \ell_{\underline{\kappa}\overline{\kappa}}(\R^d)\oplus \ell_{0\overline{\kappa}}^+(\R^d)$ such that for all $\lambda\in\Lambda$ it holds
\begin{align}\label{the-map-J}
\IM(\mathbf{J}_{\lambda})\oplus V=\ell_{0\underline{\kappa}}^-(\R^d)\oplus \ell_{\underline{\kappa}\overline{\kappa}}(\R^d)\oplus \ell_{0\overline{\kappa}}^+(\R^d).
\end{align}
The commutativity of the above diagram implies that
$\mathcal{I}\diamond (L^-_{\underline{\kappa}}\oplus L^{\!c}\oplus L^+_{\overline{\kappa}})\diamond \mathcal{J}=L$,
where
\begin{align*}
&\mathcal{J}\colon\Lambda\to \mathcal{L}(\ell_0(\R^d),\ell_{0\underline{\kappa}}^{-}(\R^d)\oplus \ell_{\underline{\kappa}\overline{\kappa}}(\R^d)\oplus\ell_{0\overline{\kappa}}^+(\R^d)),
&&\mathcal{J}(\lambda)\phi=\mathbf{J}(\lambda,\phi),\\
&\mathcal{I}\colon\Lambda\to \mathcal{L}(\ell_{0\underline{\kappa}-1}^{-}(\R^d)\oplus \ell_{\underline{\kappa}\overline{\kappa}-1}(\R^d)\oplus \ell_{0\overline{\kappa}}^+(\R^d),\ell_0(\R^d)),
 &&\mathcal{I}(\lambda)(\phi,\psi,\varphi)=\phi+\psi+\varphi,\\
& L^{-}_{\underline{\kappa}}\colon  \Lambda\to \mathcal{L}(\ell_{0\underline{\kappa}}^-(\R^d),\ell_{0\underline{\kappa}-1}^-(\R^d)),
&& L^{-}_{\underline{\kappa}}(\lambda)(\phi)=\mathbf{L}^{-}_{\underline{\kappa}}(\lambda,\phi),\\
& L^{c}\colon \Lambda\to \mathcal{L}(\ell_{\underline{\kappa}\overline{\kappa}}(\R^d),\ell_{\underline{\kappa}\overline{\kappa}-1}(\R^d)),
&&  L^{c}(\lambda)(\phi)= \mathbf{L}^{c}(\lambda,\phi),\\
& L^{+}_{\overline{\kappa}}\colon  \Lambda\to \mathcal{L}(\ell_{0\overline{\kappa}}^+(\R^d),\ell_{0\overline{\kappa}}^+(\R^d)),
&&  L^{+}_{\overline{\kappa}}(\lambda)(\phi)= \mathbf{L}^{+}_{\overline{\kappa}}(\lambda,\phi).
\end{align*}
Since $\mathcal{I}$ is a family of isomorphisms, the normalization property of the index bundle implies that
$\ind(\mathcal{I})=0$. Moreover, \eqref{the-map-J} implies that $E(\mathcal{J},V)=\Theta(\{0\})$ and hence
\begin{align*}
\ind(\mathcal{J})=[E(\mathcal{J},V)]-[\Theta(V)]=[\Theta(\{0\})]-[\Theta(\R^{2d})].
\end{align*}
Furthermore, Corollary \ref{Fredholm-L+} and Proposition \ref{Fredholm-L2} show that
$L^{-}_{\underline{\kappa}}$ and $L^{+}_{\overline{\kappa}}$ are families of epimorphism and hence
we see that the trivial subspace $\{0\}$ is transversal to the images of $L^{-}_{\underline{\kappa}}$ and $L^{+}_{\overline{\kappa}}$ as in \eqref{subspace}.
Consequently, we have
\begin{align*}
\ind(L^{-}_{\underline{\kappa}})=[E(L^{-}_{\underline{\kappa}},\{0\})]-[\Theta(\{0\})],\quad
\ind(L^{+}_{\overline{\kappa}})=[E(L^{+}_{\overline{\kappa}},\{0\})]-[\Theta(\{0\})].
\end{align*}
In addition, as the fibres of $E(L^{-}_{\underline{\kappa}},\{0\})$ and $E(L^{+}_{\overline{\kappa}},\{0\})$ are the kernels of $L^{-}_{\underline{\kappa}}$
and $L^{+}_{\overline{\kappa}}$, respectively, Corollary \ref{Fredholm-L+} and Proposition \ref{Fredholm-L2} imply that the vector bundles $E(L^{-}_{\underline{\kappa}},\{0\})$ and $E(L^{+}_{\overline{\kappa}},\{0\})$ are isomorphic to  $\Ker \mathbb{P}^{-}(\underline{\kappa})$ and  $\IM \mathbb{P}^{+}(\overline{\kappa})$ given by \eqref{vector-bundels-ker-im}, respectively. Thus we obtain
\begin{align}
\ind(L^{-}_{\underline{\kappa}})=[\Ker \mathbb{P}^{-}(\underline{\kappa})]-[\Theta(\{0\})],\;\;
\ind(L^{+}_{\overline{\kappa}})=[\IM \mathbb{P}^{+}(\overline{\kappa})]-[\Theta(\{0\})],\label{index-L+}
\end{align}
while the index bundle of $L^c \colon \Lambda\to \mathcal{L}(\ell_{\underline{\kappa}\overline{\kappa}}(\R^d),\ell_{\underline{\kappa}\overline{\kappa}-1}(\R^d))$, by the finite-dimensional property formulated in \eqref{finite-dim} and \eqref{dim-spaces-L}, is equal to
\begin{align*}
\ind(L^{c})=[\Theta(\R^{(\overline{\kappa}-\underline{\kappa}+1)d)}]-[\Theta(\R^{(\overline{\kappa}-\underline{\kappa})d)}]
=[\Theta(\R^d)]-[\Theta(\{0\})].
\end{align*}
Finally, taking all this into account, we obtain
\begin{align*}
\ind(L_\mathbb{A})&=\ind(\mathcal{I}\circ (L^-_{\underline{\kappa}}\oplus L^c\oplus L^+_{\overline{\kappa}})\circ \mathcal{J})
=\ind(\mathcal{I})+\ind(L^-_{\underline{\kappa}}\oplus L^c\oplus L^+_{\overline{\kappa}})+\ind(\mathcal{J})\\
&
=0+\ind(L^-_{\underline{\kappa}})+ \ind(L^c)+ \ind(L^+_{\overline{\kappa}})+\ind(\mathcal{J})\\
&=\ind(L^-_{\underline{\kappa}})+ \ind(L^+_{\overline{\kappa}})+\ind(L^c)+\ind(\mathcal{J})
\\
&=\left([\IM \mathbb{P}^{+}(\overline{\kappa})]-[\Theta(\{0\})]\right)
+\left([\Ker \mathbb{P}^{-}(\underline{\kappa})]-[\Theta(\{0\})]\right)+\\
&\quad\quad\quad\quad\quad\quad+\left([\Theta(\R^d)]-[\Theta(\{0\})]\right)+\left([\Theta(\{0\})]-[\Theta(\R^{2d})]\right)\\
&=\left([\IM \mathbb{P}^{+}(\overline{\kappa})]-[\Theta(\{0\})]\right)
+\left([\Ker \mathbb{P}^{-}(\underline{\kappa})]-[\Theta(\{0\})]\right)+\left([\Theta(\{0\})]-[\Theta(\R^d)]\right)\\
&=\left([\IM \mathbb{P}^{+}(\overline{\kappa})]-[\Theta(\{0\})]\right)+\left([\Ker \mathbb{P}^{-}(\underline{\kappa})]-[\Theta(\R^d)]\right)\\
&=\left([\IM \mathbb{P}^{+}(\overline{\kappa})]-[\Theta(\{0\})]\right)+\left(
[\Ker \mathbb{P}^{-}(\underline{\kappa})]-[\Ker \mathbb{P}^{-}(\underline{\kappa})]+[\Theta(\{0\})]-[\IM \mathbb{P}^{-}(\underline{\kappa})]\right)
\\
&=\left([\IM \mathbb{P}^{+}(\overline{\kappa})]-[\Theta(\{0\})]\right)+\left([\Theta(\{0\})]-[\IM \mathbb{P}^-(\underline{\kappa})]\right)\\
&=[\IM \mathbb{P}^{+}(\overline{\kappa})]-[\IM \mathbb{P}^{-}(\underline{\kappa})],
\end{align*}
as claimed in (b).
\end{proof}

\begin{rem}\label{rem-iso}
It follows from Lemma \ref{iso} that the family $\mathbb{A}$ in Theorem \ref{ind-bundle-formula} satisfies the assumption $(F3)$ if and only if $L_{\mathbb{A}(\lambda_0)}$ is an isomorphism. As isomorphisms are Fredholm operators of index $0$, and $\Lambda$ is connected, $L_\mathbb{A}$ is a family of Fredholm operators of index $0$ in this case.
\end{rem}


\noindent
We conclude this section by two corollaries of the previous theorem and Theorem \ref{perturbed-ED}.

\begin{cor}\label{ind-perturbation}
Let $\mathbb{A}:\Lambda\times\mathbb{Z}\rightarrow\mathcal{L}(\mathbb{R}^d)$ be asymptotically hyperbolic and $\mathbb{D}:\Lambda\times\mathbb{Z}\rightarrow\mathcal{L}(\mathbb{R}^d)$ $\gamma_\pm$-small at $\pm\infty$, where $\gamma_\pm$ is as in Theorem \ref{perturbed-ED}. Then $L_{\mathbb{A}+\mathbb{D}}:\Lambda\rightarrow\mathcal{L}(\ell_0(\mathbb{R}^d))$ is a continuous family of Fredholm operators and

\[\ind(L_{\mathbb{A}+\mathbb{D}})=\ind(L_\mathbb{A})\in KO(\Lambda),\]
where $L_\mathbb{A}$ and $L_{\mathbb{A}+\mathbb{D}}$ are defined as in Theorem \ref{ind-bundle-formula}.
\end{cor}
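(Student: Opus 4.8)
The plan is to realise $\ind(L_{\mathbb{A}})$ and $\ind(L_{\mathbb{A}+\mathbb{D}})$ as the endpoints of a homotopy of Fredholm operators and to conclude by the homotopy invariance of the index bundle. The first point to establish is that the whole segment of perturbations $\{t\mathbb{D}\}_{t\in[0,1]}$ is admissible at once. For each $t\in[0,1]$ the field $t\mathbb{D}$ is again $\gamma_\pm$-small at $\pm\infty$; moreover, revisiting the proof of Theorem \ref{perturbed-ED}, the cut-offs $\kappa_\pm$ produced there depend only on the rate at which $\mathbb{A}(\lambda,n)$ converges to $\mathcal{A}_\pm(\lambda)$ and on $\gamma_\pm$, not on the perturbing field, so a single pair $\kappa_+,\kappa_-$ works for every $t$. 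Hence Theorem \ref{perturbed-ED} yields, for each $t\in[0,1]$, parametrized projectors exhibiting an ED of $\mathbb{A}+t\mathbb{D}$ both on $\Z^+_{\kappa_+}$ and on $\Z^-_{\kappa_-}$. Taking $t=1$ and invoking Theorem \ref{ind-bundle-formula}$(a)$ (note that $\mathbb{A}+\mathbb{D}$ inherits $(L0)$, since $\mathbb{D}$ does) already gives the first assertion: $L_{\mathbb{A}+\mathbb{D}}$ is a continuous family of Fredholm operators.

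Next I would define $H\colon[0,1]\times\Lambda\to\mathcal{L}(\ell_0(\R^d))$ by $H(t,\lambda):=\mathbb{S}_l-\mathcal{N}_{\mathbb{A}(\lambda)+t\mathbb{D}(\lambda)}$ and check that it is a homotopy of Fredholm operators. For Fredholmness of each $H(t,\lambda)$ I would apply Lemma \ref{fredholm} to the single vector field $\mathbb{A}(\lambda)+t\mathbb{D}(\lambda)$, which by the previous paragraph admits an ED both on $\Z^+_{\kappa_+}$ and on $\Z^-_{\kappa_-}$ (here one uses that $\mathbb{D}(\lambda)$ is bounded for each fixed $\lambda$, being $\gamma_\pm$-small at $\pm\infty$ and bounded on the finite remaining interval). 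For joint continuity of $H$ I would use that the Nemitski operator of a linear discrete vector field $\mathbb{C}$ acts on $\ell_0(\R^d)$ with operator norm $\sup_{n\in\Z}\|\mathbb{C}_n\|$, so that $\|H(t,\lambda)-H(t',\lambda')\|\leqslant\sup_{n\in\Z}\|\mathbb{A}(\lambda,n)-\mathbb{A}(\lambda',n)\|+\sup_{n\in\Z}\|t\mathbb{D}(\lambda,n)-t'\mathbb{D}(\lambda',n)\|$, and this tends to $0$ as $(t',\lambda')\to(t,\lambda)$ by the uniform equicontinuity and uniform boundedness of $\mathbb{A}$ and $\mathbb{D}$ provided by $(L0)$ — the estimate is the same as in the proof of Lemma \ref{N-L-continuous}. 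Thus $H$ is a homotopy in the space of Fredholm operators joining $H(0,\cdot)=L_{\mathbb{A}}$ to $H(1,\cdot)=L_{\mathbb{A}+\mathbb{D}}$, and the homotopy invariance property of the index bundle (property $(3)$ in the list preceding Lemma \ref{fredholm}) gives $\ind(L_{\mathbb{A}+\mathbb{D}})=\ind(H(1,\cdot))=\ind(H(0,\cdot))=\ind(L_{\mathbb{A}})$ in $KO(\Lambda)$.

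The only step that requires genuine care is the uniformity in the first step — that one pair of cut-offs $\kappa_\pm$ serves for the entire family $\mathbb{A}+t\mathbb{D}$, $t\in[0,1]$ — which I would establish by tracking the constants through the proof of Theorem \ref{perturbed-ED}; the Fredholmness of the intermediate operators then comes for free from Lemma \ref{fredholm}, and the continuity of $H$ and the final invocation of homotopy invariance are routine. As an alternative one could instead compute both sides by Theorem \ref{ind-bundle-formula} ($\mathbb{A}$ itself being covered by taking $\mathbb{D}=0$ in Theorem \ref{perturbed-ED}) and then argue that the corresponding stable and unstable bundles agree up to stable isomorphism, but the homotopy argument bypasses that comparison entirely.
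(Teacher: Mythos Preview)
Your proposal is correct and follows essentially the same approach as the paper: define the homotopy $H(t,\cdot)=L_{\mathbb{A}+t\mathbb{D}}$, use Theorem~\ref{perturbed-ED} together with Lemma~\ref{fredholm} to get Fredholmness of each $H(t,\lambda)$, use Lemma~\ref{N-L-continuous} for continuity, and conclude by homotopy invariance of the index bundle. Your extra care about a uniform choice of $\kappa_\pm$ for all $t$ is not actually needed---pointwise Fredholmness of $H(t,\lambda)$ plus continuity of $H$ into $\mathcal{L}(\ell_0(\R^d))$ suffices for the homotopy invariance property---but it does no harm.
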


\begin{proof}
We consider the homotopy $H:[0,1]\times\Lambda\rightarrow\mathcal{L}(\ell_0(\mathbb{R}^d))$ given by $H(t,\cdot)=L_{\mathbb{A}+t\mathbb{D}}$. As $H$ is continuous by Lemma \ref{N-L-continuous} and each $H(t,\lambda)$ is a Fredholm operator by Theorem \ref{perturbed-ED} and Lemma \ref{fredholm}, the assertion follows from the homotopy invariance property of the index bundle.
\end{proof}

The following corollary is a generalization of the main theorem of our previous work \cite{SkibaIch}, where we considered asymptotically hyperbolic systems having invertible limits. Let us recall from Section \ref{versus} that $P^s_{\mathcal{A}}$ denotes the spectral projection of a hyperbolic matrix $\mathcal{A}$ with respect to eigenvalues inside the unit circle. For families of hyperbolic matrices, the images of the projections are vector bundles over the parameter space, which follows by the same argument as Lemma \ref{lemma-vector-bundles-ker-im}.

\begin{cor}\label{cor:asympFred}
If $\mathbb{A}:\Lambda\times\mathbb{Z}\rightarrow\mathcal{L}(\mathbb{R}^d)$ is asymptotically hyperbolic with uniform limits $\mathcal{A}_{\pm}:\Lambda\rightarrow H(d,\R)$, then

\[\ind(L_{\mathbb{A}})=[\IM P^s_{\mathcal{A}_+}]-[\IM P^s_{\mathcal{A}_-}]\in KO(\Lambda).\]
\end{cor}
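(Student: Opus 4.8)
The plan is to reduce, by means of the perturbation invariance of the index bundle, to the piecewise constant vector field $\mathbb{A}'$ that equals the limit $\mathcal{A}_+(\lambda)$ for $n\geqslant 0$ and $\mathcal{A}_-(\lambda)$ for $n<0$, and then to apply Theorem~\ref{ind-bundle-formula} to $\mathbb{A}'$, whose exponential dichotomies on the two semi-axes are the autonomous ones and hence carry the explicit spectral projectors $P^s_{\mathcal{A}_\pm}$.

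Concretely, I would first set $\mathbb{A}'(\lambda,n):=\mathcal{A}_+(\lambda)$ for $n\geqslant 0$ and $\mathbb{A}'(\lambda,n):=\mathcal{A}_-(\lambda)$ for $n<0$. Since $\mathcal{A}_\pm\colon\Lambda\to H(d,\R)$ are continuous (this uses $(L0)$ together with asymptotic hyperbolicity) and $\Lambda$ is compact, $\mathbb{A}'$ satisfies $(L0)$ and is itself asymptotically hyperbolic with the same uniform limits $\mathcal{A}_\pm$. Put $\mathbb{D}:=\mathbb{A}'-\mathbb{A}$. As $\mathbb{A}(\lambda,n)\to\mathcal{A}_\pm(\lambda)$ uniformly in $\lambda$ when $n\to\pm\infty$, one gets $\|\mathbb{D}(\lambda,n)\|\to 0$ uniformly in $\lambda$ as $|n|\to\infty$; in particular $\mathbb{D}$ is $\gamma_\pm$-small at $\pm\infty$ for the constants $\gamma_\pm>0$ attached to $\mathbb{A}$ by Theorem~\ref{perturbed-ED}. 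Corollary~\ref{ind-perturbation} then yields $\ind(L_{\mathbb{A}})=\ind(L_{\mathbb{A}+\mathbb{D}})=\ind(L_{\mathbb{A}'})\in KO(\Lambda)$, so everything comes down to computing $\ind(L_{\mathbb{A}'})$.

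For the computation, note that on $\Z^{+}_0$ the transition operator of $\mathbb{A}'$ is $\Phi_\lambda(k,n)=\mathcal{A}_+(\lambda)^{k-n}$ and on $\Z^{-}_0$ it is $\Phi_\lambda(k,n)=\mathcal{A}_-(\lambda)^{k-n}$, i.e.\ on each semi-axis $\mathbb{A}'$ is the autonomous field determined by $\mathcal{A}_\pm$. By Example~\ref{ED=H} applied fibrewise, by Proposition~\ref{family-hyperbolic} with $\mathbb{B}=0$ (which provides a parametrized projector with uniform constants $K,\alpha$), by the uniqueness of ED-projectors invoked in Proposition~\ref{prop-projconstant}, and by the continuity of the families $P^s_{\mathcal{A}_\pm}$ recalled in Section~\ref{versus}, the field $\mathbb{A}'$ admits an ED on $\Z^{+}_0$ with parametrized projector $\mathbb{P}^{+}(\lambda,n)=P^s_{\mathcal{A}_+(\lambda)}$ and an ED on $\Z^{-}_0$ with parametrized projector $\mathbb{P}^{-}(\lambda,n)=P^s_{\mathcal{A}_-(\lambda)}$. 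Restricting these dichotomies to $\Z^{+}_1$ and $\Z^{-}_{-1}$ --- which is harmless, since the restriction of an ED to a sub-semi-axis is again an ED with the same projector and the same constants --- Theorem~\ref{ind-bundle-formula} applies with $\overline{\kappa}=1$, $\underline{\kappa}=-1$ and produces
\[
\ind(L_{\mathbb{A}'})=[\IM\mathbb{P}^{+}(1)]-[\IM\mathbb{P}^{-}(-1)]=[\IM P^s_{\mathcal{A}_+}]-[\IM P^s_{\mathcal{A}_-}]\in KO(\Lambda),
\]
because $\mathbb{P}^{\pm}$ are constant in $n$ and equal the respective spectral projections; combined with the reduction of the previous paragraph this is the assertion.

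I do not expect a genuine obstacle here: the conceptual content is entirely carried by Corollary~\ref{ind-perturbation} and Theorem~\ref{ind-bundle-formula}. The only points that require care are routine bookkeeping --- that $\mathbb{A}'$ is a legitimate parametrized linear discrete vector field satisfying $(L0)$ and asymptotically hyperbolic with uniform limits; that the autonomous field with matrix $\mathcal{A}_\pm$ really admits an ED on a semi-axis with the spectral projection in the parametrized sense of Definition~\ref{ED-Lambda} (with $K,\alpha$ uniform over $\Lambda$); and that one must shift the base-point indices from $\Z^{\pm}_0$ to $\Z^{\pm}_{\pm 1}$ in order to meet the hypothesis $\underline{\kappa}<0<\overline{\kappa}$ of Theorem~\ref{ind-bundle-formula}.
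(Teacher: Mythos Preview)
Your proposal is correct and follows essentially the same route as the paper: define the piecewise-constant field (the paper calls it $\mathcal{A}$, you call it $\mathbb{A}'$), observe that the difference to $\mathbb{A}$ tends to $0$ uniformly and hence is $\gamma_\pm$-small for any $\gamma_\pm>0$, invoke Corollary~\ref{ind-perturbation} to reduce to the piecewise-constant case, and then apply Theorem~\ref{ind-bundle-formula} together with Proposition~\ref{prop-projconstant} to identify the projectors with $P^s_{\mathcal{A}_\pm}$. The only cosmetic difference is that the paper takes $\mathcal{A}$ as the base and $\mathbb{A}-\mathcal{A}$ as the perturbation in Corollary~\ref{ind-perturbation}, whereas you take $\mathbb{A}$ as base and $\mathbb{A}'-\mathbb{A}$ as perturbation; both are valid since the $\gamma_\pm$ from Theorem~\ref{perturbed-ED} depend only on the common limits $\mathcal{A}_\pm$.
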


\begin{proof}
We define $\mathcal{A}\colon \Lambda\times \Z\to \mathcal{L}(\R^d)$ by
\begin{equation*}
\mathcal{A}(\lambda,n)=\left\{
\begin{array}{ll}
\mathcal{A}_+(\lambda) & \emph{ if }\; n\geqslant 0, \\
\mathcal{A}_-(\lambda) & \emph{ if }\; n<0,
\end{array}
\right.
\end{equation*}
and note that
\begin{align*}
\mathbb{A}=\mathcal{A}+(\mathbb{A}-\mathcal{A}) \text{ as well as }
\lim_{n\to\pm\infty} (\mathbb{A}(\lambda,n)-\mathcal{A}(\lambda,n))=0
\end{align*}
uniformly in $\lambda\in\Lambda$. By the latter observation, $\mathbb{A}-\mathcal{A}$ is $\gamma_\pm$-small at $\pm\infty$ for any $\gamma_\pm>0$. Hence we obtain from Theorem \ref{ind-bundle-formula} and Corollary \ref{ind-perturbation} that
\begin{align*}
&\ind(L_{\mathbb{A}})=\ind(L_{\mathcal{A}+(\mathbb{A}-\mathcal{A})})=\ind(L_{\mathcal{A}})=[\IM \mathbb{P}^+_{\mathcal{A}}(\overline{\kappa})]-[\IM \mathbb{P}^-_{\mathcal{A}}(\underline{\kappa})]\\
&=[\IM P^s_{\mathcal{A}_+}]-[\IM P^s_{\mathcal{A}_-}],
\end{align*}
where $\overline{\kappa}\in \Z$ is sufficiently large, $\underline{\kappa}\in \Z$ is sufficiently small, and the last equality follows from Proposition \ref{prop-projconstant}.
\end{proof}


\section{Bifurcation results for discrete dynamical systems with examples}\label{section-bifurcation}
The aim of this section is to weaken the assumptions of the bifurcation results from  \cite{JacoboRobertII,SkibaIch} for which we use the material developed in the previous sections. As most of the results follow from Theorem \ref{ind-bundle-formula} and Lemma \ref{L-Niemytski} with similar arguments as in \cite{SkibaIch}, we will just give precise references instead of full arguments. In the second part of this section we give examples to show how the following theorems generalise our previous results for asymptotically hyperbolic systems from \cite{SkibaIch}.

\begin{theorem}\label{thm:nonlin}
If the system \eqref{nonlinear-equation} satisfies the assumptions \emph{(F0)--(F3)} and
\begin{align}\label{J-condition}
J(\IM \mathbb{P}^+(\overline{\kappa}))\neq J(\IM \mathbb{P}^-(\underline{\kappa}))\in J(\Lambda),
\end{align}
then there is a bifurcation point.
\end{theorem}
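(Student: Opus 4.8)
The plan is to follow the classical bifurcation-via-index-bundle strategy, adapted to our discrete setting. First I would set up the functional-analytic framework: define $F\colon\Lambda\times\ell_0(\R^d)\to\ell_0(\R^d)$ by $F(\lambda,\phi)=\mathbb{S}_l\phi-\mathcal{N}_f(\lambda,\phi)$, so that homoclinic solutions of \eqref{nonlinear-equation} correspond exactly to zeros of $F$. By Lemma \ref{L-Niemytski}, $F$ is continuous and $C^1$ in the second variable with continuous derivative, and $F(\lambda,0)=0$ for all $\lambda$ because $f_n(\lambda,0)=0$. The partial derivative at the trivial branch is $D_2F(\lambda,0)=\mathbb{S}_l-\mathcal{N}_{\mathbb{A}(\lambda)}=L_{\mathbb{A}}(\lambda)$, where $\mathbb{A}_n(\lambda)=D_2f_n(\lambda,0)$ as in \eqref{assumption-A}. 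By $(F2)$ the hypotheses of Theorem \ref{ind-bundle-formula} are met, so $L_{\mathbb{A}}\colon\Lambda\to\mathcal{L}(\ell_0(\R^d))$ is a continuous family of Fredholm operators, and by $(F3)$ together with Remark \ref{rem-iso} these are Fredholm of index $0$; moreover $L_{\mathbb{A}}(\lambda_0)$ is invertible for the distinguished $\lambda_0$.

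Next I would invoke the abstract bifurcation criterion via the index bundle (the discrete analogue of the argument in \cite{SkibaIch}, ultimately resting on the theorems of \cite{JacoboRobertII}): if $F$ is a $C^1$-in-$\phi$ family as above with a trivial branch, the linearization $L_{\mathbb{A}}$ is a family of Fredholm operators of index $0$ which is invertible at some parameter value, and the $J$-class $J(\ind L_{\mathbb{A}})\in J(\Lambda)$ is nonzero, then the set $\mathcal{B}_F$ of bifurcation points from the trivial branch is nonempty. The point of requiring invertibility at $\lambda_0$ is that otherwise every point could be a bifurcation point trivially; and the point of passing to the $J$-homomorphism rather than working in $\widetilde{KO}(\Lambda)$ directly is that the obstruction to a global parametrized implicit-function-theorem-type splitting of the nonlinear operator lives precisely in $J(\Lambda)$, since stably fibrewise homotopy trivial index bundles can be "absorbed" by a homotopy of the family through invertible operators near the trivial branch. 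Concretely one would use a parametrized Lyapunov--Schmidt reduction to reduce, on a neighbourhood of $\Lambda\times\{0\}$, to a finite-dimensional problem $\Lambda\times\R^p\to\R^q$ whose linearization at $0$ is a representative of $\ind(L_{\mathbb{A}})$; if there were no bifurcation, this reduced map would have no nontrivial zeros near the trivial branch, which forces the sphere bundles of the two finite-dimensional bundles to be fibrewise homotopy equivalent, contradicting $J(\ind L_{\mathbb{A}})\neq 0$.

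It then remains to translate the hypothesis \eqref{J-condition} into $J(\ind L_{\mathbb{A}})\neq 0$. By Theorem \ref{ind-bundle-formula}(b), $\ind(L_{\mathbb{A}})=[\IM\mathbb{P}^+(\overline{\kappa})]-[\IM\mathbb{P}^-(\underline{\kappa})]\in KO(\Lambda)$, and since both operators are Fredholm of index $0$ this class lies in $\widetilde{KO}(\Lambda)$, so the two bundles have the same fibre dimension. Applying the $J$-homomorphism, which is additive, gives $J(\ind L_{\mathbb{A}})=J([\IM\mathbb{P}^+(\overline{\kappa})])-J([\IM\mathbb{P}^-(\underline{\kappa})])$; this is nonzero in $J(\Lambda)$ precisely when $J(\IM\mathbb{P}^+(\overline{\kappa}))\neq J(\IM\mathbb{P}^-(\underline{\kappa}))$, which is exactly \eqref{J-condition}. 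Hence the abstract criterion applies and $\mathcal{B}_f\neq\emptyset$.

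I expect the main obstacle to be the careful statement and justification of the abstract bifurcation criterion in the present $C^1$-in-the-second-variable-only setting: one must make sure that the weaker regularity coming from Lemma \ref{L-Niemytski} (continuity in $\lambda$, but no uniform differentiability jointly) still suffices to run the parametrized Lyapunov--Schmidt reduction and to identify the linearization of the reduced finite-dimensional map with a representative of the index bundle. In \cite{SkibaIch} the stronger equicontinuity assumption on compact sets was in force; here I would need to check that all the compactness and uniform-approximation arguments in that reduction only ever use the behaviour of $f_n$ on a fixed small ball $D_d(0,r_0)$, which is exactly what $(F1)$ provides, so that the proof of \cite{SkibaIch} goes through essentially verbatim with the references updated to Theorem \ref{ind-bundle-formula} and Lemma \ref{L-Niemytski}. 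Everything else — the Fredholmness, the index-$0$ property, and the $KO$-to-$J$ bookkeeping — is immediate from the results already established above.
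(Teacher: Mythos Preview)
Your proposal is correct and follows essentially the same approach as the paper: set up $F=\mathbb{S}_l-\mathcal{N}_f$ with the regularity supplied by Lemma~\ref{L-Niemytski}, linearize to $L_{\mathbb{A}}$, use $(F2)$ with Theorem~\ref{ind-bundle-formula} for the Fredholm property and the index formula, use $(F3)$ with Remark~\ref{rem-iso} for index~$0$ and invertibility at $\lambda_0$, and then feed this into the abstract $J$-class bifurcation criterion from \cite{SkibaIch}. The paper's own proof is precisely this, phrased as an update of \cite[\S4.1--4.3]{SkibaIch} with Lemma~\ref{L-Niemytski} and Theorem~\ref{ind-bundle-formula} substituted in; your anticipated obstacle about the weaker $(F1)$ is exactly what the paper handles by noting that the analytic part \cite[\S4.1]{SkibaIch} now follows from Lemma~\ref{L-Niemytski} while \cite[\S4.2]{SkibaIch} carries over verbatim.
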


\begin{proof}
The argument follows the proof of Theorem 2.2 in \cite{SkibaIch}, which can be found in \S 4 of that reference. Indeed, the analytic properties of the nonlinearity in \eqref{nonlinear-equation} that were before obtained in \cite[\S 4.1]{SkibaIch} now follow from the more general Lemma \ref{L-Niemytski}. The second part of the proof in \cite[\S 4.2]{SkibaIch} carries over verbatim. Note that we are indeed dealing with Fredholm operators of index $0$ by Remark \ref{rem-iso}. Finally, in the third part \cite[\S 4.3]{SkibaIch} we just have to replace the index formula from \cite[Thm. 3.2]{SkibaIch} by our new Theorem \ref{ind-bundle-formula}.
\end{proof}
\noindent
It is well known that the non-triviality of the J-homomorphism can be obtained from Stiefel-Whitney classes, i.e., if $w_i(\mathbb{E})\neq w_i(\mathbb{F})$, then $J(\mathbb{E})\neq J(\mathbb{F})$ for any bundles $\mathbb{E}$, $\mathbb{F}$ over $\Lambda$. Thus, denoting by
\[w(\mathbb{E})=1+w_1(\mathbb{E})+w_2(\mathbb{E})+\ldots\in H^\ast(\Lambda;\mathbb{Z}_2)\]
the total Stiefel-Whitney class of $\mathbb{E}$, we obtain the following corollary of Theorem \ref{thm:nonlin}.
\begin{cor}
If the system \eqref{nonlinear-equation} satisfies the assumptions \emph{(F0)--(F3)} and
\[w(\IM \mathbb{P}^+(\overline{\kappa}))\neq w(\IM \mathbb{P}^-(\underline{\kappa}))\in H^\ast(\Lambda;\mathbb{Z}_2),\]
then there is a bifurcation point.
\end{cor}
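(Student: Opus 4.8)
The plan is to deduce this corollary directly from Theorem~\ref{thm:nonlin} by showing that the hypothesis on Stiefel--Whitney classes forces the $J$-theory inequality \eqref{J-condition}. First I would recall the standard naturality and non-triviality principle for the $J$-homomorphism: if $\mathbb{E}$ and $\mathbb{F}$ are vector bundles over $\Lambda$ that are stably fibrewise homotopy equivalent, then all their Stiefel--Whitney classes agree, $w_i(\mathbb{E})=w_i(\mathbb{F})$ for all $i$. This is because the mod-$2$ cohomology Thom class, and hence the Stiefel--Whitney classes, are invariants of the stable fibre homotopy type of the sphere bundle (this is the classical fact underlying the fact that Stiefel--Whitney classes factor through $J(\Lambda)$; see Atiyah's work \cite{ThomAtiyah} already cited in the excerpt). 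Consequently, if $w(\mathbb{E})\neq w(\mathbb{F})$, then $\mathbb{E}$ and $\mathbb{F}$ are not stably fibrewise homotopy equivalent, and therefore $[\mathbb{E}]-[\mathbb{F}]\notin T(\Lambda)$, which means $J([\mathbb{E}])\neq J([\mathbb{F}])$ in $J(\Lambda)$.

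Applying this with $\mathbb{E}=\IM\mathbb{P}^+(\overline{\kappa})$ and $\mathbb{F}=\IM\mathbb{P}^-(\underline{\kappa})$: the assumption $(F2)$ guarantees that these bundles are well-defined (via Lemma~\ref{lemma-vector-bundles-ker-im}), and the assumption $w(\IM\mathbb{P}^+(\overline{\kappa}))\neq w(\IM\mathbb{P}^-(\underline{\kappa}))$ in $H^\ast(\Lambda;\mathbb{Z}_2)$ means that $w_i(\IM\mathbb{P}^+(\overline{\kappa}))\neq w_i(\IM\mathbb{P}^-(\underline{\kappa}))$ for at least one index $i$. By the principle above, this yields
\[
J(\IM\mathbb{P}^+(\overline{\kappa}))\neq J(\IM\mathbb{P}^-(\underline{\kappa}))\in J(\Lambda),
\]
which is precisely condition \eqref{J-condition} of Theorem~\ref{thm:nonlin}. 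Since $(F0)$--$(F3)$ are assumed, Theorem~\ref{thm:nonlin} applies and produces a bifurcation point, completing the proof.

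The argument is short and the only substantive point is the implication ``differing Stiefel--Whitney classes $\Rightarrow$ differing $J$-classes.'' I expect this to be the one step that a careful reader would want justified, and I would make it explicit by citing the fact (already referenced in the K-theory preliminaries via \cite{ThomAtiyah}) that Stiefel--Whitney classes are stable-fibre-homotopy invariants, hence descend to a well-defined map $J(\Lambda)\to H^\ast(\Lambda;\mathbb{Z}_2)$; contrapositively, unequal images under this map imply unequal $J$-classes. Everything else is a direct invocation of Theorem~\ref{thm:nonlin}. No genuine obstacle arises beyond correctly quoting this classical fact.
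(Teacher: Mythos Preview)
Your proposal is correct and follows the same approach as the paper: the paper simply records (immediately before the corollary) the well-known fact that $w_i(\mathbb{E})\neq w_i(\mathbb{F})$ implies $J(\mathbb{E})\neq J(\mathbb{F})$, and then reads off the result from Theorem~\ref{thm:nonlin}. Your write-up is a slightly more detailed version of exactly this argument.
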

\noindent
As in \cite{FiPejsachowiczII}, \cite{JacoboTMNAII} and \cite{NilsBif}, the non-vanishing of a Stiefel-Whitney class actually can show more than just the existence of a single bifurcation point.
\begin{theorem}\label{thm:nonlinII}
If $\Lambda$ is a compact connected topological manifold of dimension $k\geqslant 2$, the system \eqref{nonlinear-equation} satisfies the assumptions \emph{(F0)--(F3)}  and
\begin{equation*}
w_i(\IM \mathbb{P}^+(\overline{\kappa}))\neq w_i(\IM \mathbb{P}^-(\underline{\kappa}))\in H^i(\Lambda;\mathbb{Z}_2)
\end{equation*}
for some $1\leqslant i\leqslant k-1$, then the covering dimension of $\mathcal{B}$ is at least $k-i$ and $\mathcal{B}$ is not contractible as a topological space.
\end{theorem}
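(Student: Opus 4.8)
The plan is to distil the hypothesis into the statement that a single $\mathbb Z_2$-cohomology class of $\Lambda$ vanishes on the complement of $\mathcal B$, and then to extract the size and the non-contractibility of $\mathcal B$ from Alexander--Lefschetz duality in the closed manifold $\Lambda$. Set
\[
\omega:=w_i(\IM\mathbb P^+(\overline\kappa))-w_i(\IM\mathbb P^-(\underline\kappa))\in H^i(\Lambda;\mathbb Z_2),
\]
which is nonzero by assumption, and write $j\colon\Lambda\setminus\mathcal B\hookrightarrow\Lambda$ for the inclusion; recall that $\mathcal B$ is closed, hence a compact subset of $\Lambda$. The two things to prove are: (i) $j^*\omega=0$ in $H^i(\Lambda\setminus\mathcal B;\mathbb Z_2)$; and (ii) that, given (i), the covering dimension $\dim\mathcal B$ is at least $k-i$ and $\mathcal B$ is not contractible.

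For (i) I would feed Theorem~\ref{thm:nonlin} into the complement of $\mathcal B$, parameter patch by parameter patch. Let $h\colon K\to\Lambda$ be a continuous map from a finite connected polyhedron $K$ with $h(K)\subseteq\Lambda\setminus\mathcal B$, and consider the pulled-back system $\tilde f_n(\mu,x):=f_n(h(\mu),x)$ over $K$. Assumptions (F0)--(F2) only constrain $f$ near $x=0$ and survive composition with the continuous map $h$, so $\tilde f$ satisfies (F0)--(F2); its linearisation is $\tilde{\mathbb A}(\mu)=\mathbb A(h(\mu))$ with dichotomy projectors $\tilde{\mathbb P}^\pm(\mu,\cdot)=\mathbb P^\pm(h(\mu),\cdot)$, so that $\IM\tilde{\mathbb P}^+(\overline\kappa)=h^*\IM\mathbb P^+(\overline\kappa)$ and $\IM\tilde{\mathbb P}^-(\underline\kappa)=h^*\IM\mathbb P^-(\underline\kappa)$. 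By Remark~\ref{rem-iso} the Fredholm index of $L_{\mathbb A}(\lambda)$ is constantly $0$ on the connected $\Lambda$, hence $L_{\tilde{\mathbb A}}(\mu)=L_{\mathbb A}(h(\mu))$ is a continuous family of index-$0$ Fredholm operators --- which is the only property, besides (F0)--(F2), that the proof of Theorem~\ref{thm:nonlin} consumes --- so that argument applies to $\tilde f$. Now $\tilde f$ has no bifurcation point, since a nontrivial small homoclinic solution of $\tilde f$ at $\mu$ is one of $f$ at $h(\mu)\in\Lambda\setminus\mathcal B$; therefore the $J$-condition must fail for $\tilde f$, i.e.\ $J(h^*\IM\mathbb P^+(\overline\kappa))=J(h^*\IM\mathbb P^-(\underline\kappa))$ in $J(K)$. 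Since $\mathbb Z_2$ Stiefel--Whitney classes factor through the $J$-group (the fact recalled right before the statement), applying $w_i$ and naturality gives $h^*\omega=0$. Finally, replacing $\Lambda\setminus\mathcal B$ (an open, hence CW, $k$-manifold) by a homotopy-equivalent CW complex and exhausting it by finite subcomplexes $K_m$ (passing to components where necessary), the previous step yields $\omega|_{K_m}=0$ for every $m$; as $\{H^{i-1}(K_m;\mathbb Z_2)\}$ is a tower of finite $\mathbb Z_2$-vector spaces it is Mittag--Leffler, so the Milnor $\varprojlim^1$-sequence forces $j^*\omega=0$.

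For (ii), write out the exact sequence of the pair $(\Lambda,\Lambda\setminus\mathcal B)$ in \v{C}ech cohomology with $\mathbb Z_2$ coefficients:
\[
\check H^i(\Lambda,\Lambda\setminus\mathcal B;\mathbb Z_2)\xrightarrow{\;q\;}H^i(\Lambda;\mathbb Z_2)\xrightarrow{\;j^*\;}H^i(\Lambda\setminus\mathcal B;\mathbb Z_2).
\]
By (i), $\omega\in\im q$. As $\Lambda$ is a closed topological $k$-manifold it is a $\mathbb Z_2$-homology manifold, hence $\mathbb Z_2$-orientable, and Alexander--Lefschetz duality identifies $\check H^i(\Lambda,\Lambda\setminus\mathcal B;\mathbb Z_2)$ with the Steenrod--Sitnikov homology $\bar H_{k-i}(\mathcal B;\mathbb Z_2)$. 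If $\dim\mathcal B\le k-i-1$, this group is zero (Steenrod homology of a compactum vanishes above its covering dimension), forcing $\omega=0$, which contradicts the hypothesis; hence $\dim\mathcal B\ge k-i$. If $\mathcal B$ were contractible, then $\bar H_q(\mathcal B;\mathbb Z_2)=0$ for all $q\ge1$ (Steenrod--Sitnikov homology satisfies the homotopy axiom), and since $1\le k-i$ we again obtain $\check H^i(\Lambda,\Lambda\setminus\mathcal B;\mathbb Z_2)=0$ and $\omega=0$, a contradiction; hence $\mathcal B$ is not contractible.

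The analytic side of this is routine given Sections~\ref{section-Nemitski}--\ref{section-index}: one only has to note that composition with $h$ preserves (F0)--(F2) and that it is index-$0$ Fredholmness, not (F3) verbatim, that the proof of Theorem~\ref{thm:nonlin} uses. The care goes into the topology: making the duality isomorphism $\check H^i(\Lambda,\Lambda\setminus\mathcal B;\mathbb Z_2)\cong\bar H_{k-i}(\mathcal B;\mathbb Z_2)$ precise for an arbitrary closed subset of a topological --- not smooth, not triangulated --- manifold (which is exactly why Steenrod--Sitnikov homology, rather than singular or \v{C}ech homology, is the right receptacle), and justifying the passage from the compact pullbacks to a statement about $H^i(\Lambda\setminus\mathcal B;\mathbb Z_2)$ via the $\varprojlim^1$ argument. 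Both are standard in this circle of ideas (cf.\ \cite{FiPejsachowiczII,JacoboTMNAII,NilsBif}), and that is where I expect the real bookkeeping to lie.
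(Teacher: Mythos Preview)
Your overall two-step strategy --- first establish $j^*\omega=0$ on $\Lambda\setminus\mathcal B$, then feed this into Alexander--Lefschetz duality --- is exactly the mechanism behind \cite{FiPejsachowiczII,JacoboTMNAII,NilsBif}, which is where the paper's one-line proof (a pointer to \cite[Theorem~2.7]{SkibaIch}) ultimately lands; and your duality step~(ii) is correct and carefully handled.

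There is, however, a genuine gap in step~(i). You write that ``it is index-$0$ Fredholmness, not (F3) verbatim, that the proof of Theorem~\ref{thm:nonlin} consumes'' and on that basis apply its contrapositive to the pulled-back system $\tilde f$ over each $K$. This assertion is false: the paper stresses, in the paragraph immediately following Theorem~\ref{thm:nonlinII}, that \cite{SkibaIch} already contains an example with $J(\ind L)\neq 0$, \emph{no} bifurcation, and no parameter value at which the linearisation is invertible --- so (F3) is essential to Theorem~\ref{thm:nonlin} and cannot be traded for mere index-$0$ Fredholmness. For your pulled-back system nothing forces any $L_{\mathbb A}(h(\mu))$ to be invertible (certainly not when $h(K)$ lands in a component of $\Lambda\setminus\mathcal B$ not containing the point $\lambda_0$ supplied by (F3)), so the contrapositive of Theorem~\ref{thm:nonlin} is simply not available over $K$. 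The repair is a change of plan, not bookkeeping: one does not black-box Theorem~\ref{thm:nonlin} on subpolyhedra but rather opens its proof and uses (F3) \emph{once}, over all of $\Lambda$, to reduce $\mathbb{S}_l-\mathcal N_f$ to a compact perturbation of the identity (equivalently, to a finite-dimensional family of maps); the nonlinear information that $0$ is an isolated zero of this reduced family over $\Lambda\setminus\mathcal B$ then yields $j^*\omega=0$ directly, as carried out in \cite{JacoboTMNAII,NilsBif}. With (i) repaired in this way, your step~(ii) goes through unchanged.
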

\begin{proof}
This follows from Theorem \ref{thm:nonlin} as in \cite[Theorem 2.7]{SkibaIch}.
\end{proof}
\noindent
Let us note that if we assume instead of $(F2)$ the more restrictive assumption that the linear vector field $\mathbb{A}$ is asymptotically hyperbolic, then we obtain from Theorem \ref{thm:nonlin}, Theorem \ref{thm:nonlinII} and Corollary \ref{cor:asympFred} the bifurcation theorems of our previous work \cite{SkibaIch}. We provided in \cite{SkibaIch} an example showing that the bifurcation theorems obtained in that reference are not true without assuming that the linearized operators are invertible for one parameter value $\lambda_0\in\Lambda$.  Consequently, by Remark \ref{rem-iso},  $(F3)$ cannot be lifted in Theorem \ref{thm:nonlin} and Theorem \ref{thm:nonlinII}.\\
The following results prepare the examples at the end of this section.

\begin{lemma}\label{construction}
For any $k$-dimensional subbundle $\mathbb{E}$ of the trivial bundle $\Theta(\R^d)$ over a compact space $\Lambda$, $0<k<d$, and $q\in (0,1)$ there exists a continuous family $\mathbb{H}_{\mathbb{E}}\colon \Lambda\to GL(d,\R)\cap H(d,\R)$ such that
\begin{align*}
&E^s_q(\mathbb{H}_{\mathbb{E}}):=\{(\lambda,v)\in \Theta(\R^d)\mid \mathbb{H}_{\mathbb{E}}(\lambda)^n v= q^nv\text{ for }n\geqslant 0\}=\mathbb{\mathbb{E}},\\
&E^u_q(\mathbb{H}_{\mathbb{E}}):=\{(\lambda,v)\in \Theta(\R^d)\mid \mathbb{H}_{\mathbb{E}}(\lambda)^{n} v=q^{-n}v \text{ for }n\geqslant 0\}=\mathbb{E}^{\perp},
\end{align*}
where $\mathbb{E}^{\perp}:=\{(\lambda,w)\in \Lambda\times \R^d\mid w\perp v \text{ for all } v\in\mathbb{E}_{\lambda}\subset\R^d\}$.
\end{lemma}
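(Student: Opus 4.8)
The plan is to construct $\mathbb{H}_{\mathbb{E}}(\lambda)$ directly as a block operator adapted to the orthogonal splitting $\mathbb{R}^d = \mathbb{E}_\lambda \oplus \mathbb{E}_\lambda^\perp$. First I would let $\Pi(\lambda)\colon\mathbb{R}^d\to\mathbb{R}^d$ denote the orthogonal projection onto $\mathbb{E}_\lambda$; since $\mathbb{E}$ is a subbundle of the trivial bundle $\Theta(\mathbb{R}^d)$ over the compact space $\Lambda$, the map $\lambda\mapsto\Pi(\lambda)$ is continuous (the orthogonal projector onto a continuously varying subspace is continuous, e.g.\ by the formula $\Pi=GG^T$ for a local orthonormal frame $G$, or by the argument used in Lemma~\ref{lemma-vector-bundles-ker-im}). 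Then I would simply set
\begin{align*}
\mathbb{H}_{\mathbb{E}}(\lambda) := q\,\Pi(\lambda) + q^{-1}(I_d-\Pi(\lambda)),\quad\lambda\in\Lambda.
\end{align*}

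Next I would verify the required properties. Continuity of $\mathbb{H}_{\mathbb{E}}\colon\Lambda\to M(d,\mathbb{R})$ is immediate from continuity of $\Pi$. Since $\Pi(\lambda)$ and $I_d-\Pi(\lambda)$ are complementary projections, $\mathbb{H}_{\mathbb{E}}(\lambda)$ is invertible with inverse $q^{-1}\Pi(\lambda)+q(I_d-\Pi(\lambda))$, so $\mathbb{H}_{\mathbb{E}}(\lambda)\in GL(d,\mathbb{R})$; moreover $\mathbb{H}_{\mathbb{E}}(\lambda)$ is symmetric with eigenvalues $q\in(0,1)$ (on $\mathbb{E}_\lambda$) and $q^{-1}>1$ (on $\mathbb{E}_\lambda^\perp$), hence it is hyperbolic, i.e.\ $\mathbb{H}_{\mathbb{E}}(\lambda)\in H(d,\mathbb{R})$. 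For the spectral spaces: since $\mathbb{H}_{\mathbb{E}}(\lambda)$ acts on $\mathbb{E}_\lambda$ as multiplication by $q$ and on $\mathbb{E}_\lambda^\perp$ as multiplication by $q^{-1}$, we have $\mathbb{H}_{\mathbb{E}}(\lambda)^n v = q^n v$ for all $n\geq 0$ if and only if $v\in\mathbb{E}_\lambda$ (if $v$ has a nonzero component $w\in\mathbb{E}_\lambda^\perp$, then $\mathbb{H}_{\mathbb{E}}(\lambda)^n v$ has component $q^{-n}w$, which prevents the identity for $n\geq 1$); this gives $E^s_q(\mathbb{H}_{\mathbb{E}})=\mathbb{E}$. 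Symmetrically, $\mathbb{H}_{\mathbb{E}}(\lambda)^n v = q^{-n}v$ for all $n\geq 0$ iff $v\in\mathbb{E}_\lambda^\perp$, giving $E^u_q(\mathbb{H}_{\mathbb{E}})=\mathbb{E}^\perp$.

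There is no serious obstacle here; the only point requiring a little care is the continuity of $\lambda\mapsto\Pi(\lambda)$, for which I would invoke the same local-frame argument (or cite \cite[Prop.~14.2.3]{Dieck} / the reasoning in Lemma~\ref{lemma-vector-bundles-ker-im}) that is already used elsewhere in the paper to pass between continuously varying subspaces and continuously varying projectors. Everything else is a direct computation with the block-diagonal form, and the equalities $E^s_q(\mathbb{H}_{\mathbb{E}})=\mathbb{E}$, $E^u_q(\mathbb{H}_{\mathbb{E}})=\mathbb{E}^\perp$ hold fiberwise by inspection. I would remark that this $\mathbb{H}_{\mathbb{E}}$ is precisely the autonomous hyperbolic system whose ED on $\mathbb{Z}$ (Example~\ref{ED=H}) has stable bundle $\mathbb{E}$ and unstable bundle $\mathbb{E}^\perp$, which is the feature exploited in the subsequent construction of examples.
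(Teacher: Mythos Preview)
Your proposal is correct and is essentially the same construction as the paper's: writing $v=v_\lambda+v_\lambda^\perp$ for the orthogonal decomposition, the paper defines $\mathbb{H}_{\mathbb{E}}(\lambda)v:=q\,v_\lambda+(1/q)\,v_\lambda^\perp$, which is precisely your $q\,\Pi(\lambda)+q^{-1}(I_d-\Pi(\lambda))$. The only cosmetic difference is that the paper verifies continuity of $\mathbb{H}_{\mathbb{E}}$ by an explicit $\varepsilon$-$\delta$ estimate using uniform continuity of the projection maps on the unit sphere, whereas you invoke continuity of $\lambda\mapsto\Pi(\lambda)$ directly; both are fine.
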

\begin{proof}
The fiberwise orthogonal complement $\mathbb{E}^{\perp}$ is a $(d-k)$-dimensional vector subbundle of $\Theta(\R^d)$ such that
$\mathbb{E}\oplus \mathbb{E}^{\perp}\approx\Theta(\R^d)$ (cf. \cite[p.~354]{Dieck} or \cite{Hatcher1}). Consequently, every vector $v\in \R^d$ can be represented in the unique form
\begin{equation}\label{vector}
v=v_{\lambda}+v_{\lambda}^{\perp}, \text{where } v_{\lambda}\in \mathbb{E}_{\lambda}\subset\R^d \text{ and } v_{\lambda}^{\perp}\in \mathbb{E}_{\lambda}^{\perp}\subset \R^d.
\end{equation}
Moreover, the two maps $\Theta(\R^d)\ni (\lambda,v)\mapsto v_{\lambda}\in \mathbb{E}_{\lambda}\subset\R^d$ and $\Theta(\R^d)\ni (\lambda,v)\mapsto v_{\lambda}^{\perp}\in \mathbb{E}^{\perp}\subset\R^d$ are continuous, and hence uniformly continuous on the set $\{(\lambda,v)\in\Theta(\R^d)\mid |v|=1\}$.\\
We define $\mathbb{H}_{\mathbb{E}}\colon \Lambda\to GL(d,\R)\cap H(d,\R)$ by
\begin{align}\label{L-E}
\mathbb{H}_{\mathbb{E}}(\lambda)v:=q\cdot v_{\lambda}+(1/q)\cdot v_{\lambda}^{\perp},
\end{align}
for all $\lambda\in\Lambda$ and $v\in \R^d$, where $q\in (0,1)$ is a fixed constant. Now

\begin{align*}
||\mathbb{H}_{\mathbb{E}}(\lambda_1)-\mathbb{H}_{\mathbb{E}}(\lambda_2)||&=\sup_{|v|=1}|\mathbb{H}_{\mathbb{E}}(\lambda_1)v-\mathbb{H}_{\mathbb{E}}(\lambda_2)v|=
\sup_{|v|=1}|q\cdot (v_{\lambda_1}-v_{\lambda_2})-(1/q)(v_{\lambda_1}^{\perp}-v_{\lambda_2}^{\perp})|\\
&\leqslant \sup_{|v|=1}|q\cdot (v_{\lambda_1}-v_{\lambda_2})|+\sup_{|v|=1} |(1/q)(v_{\lambda_1}^{\perp}-v_{\lambda_2}^{\perp})|,
\end{align*}
which implies that $\mathbb{H}_{\mathbb{E}}$ is continuous. Finally, it is easy to see that the defined map $\mathbb{H}_{\mathbb{E}}$ has the required properties. This completes the proof.
\end{proof}
\noindent
The Atiyah-J\"anich Theorem states that for every Hilbert space $H$ and compact topological space $\Lambda$, the index bundle induces a bijection from the homotopy classes $[\Lambda,\Phi(H)]$ to $KO(\Lambda)$. Using a classical theorem of Arlt from \cite{Arlt}, we have noted in \cite{SkibaIch} that the same assertion is true for $\ell_0(\mathbb{R}^d)$.

\begin{theorem}[\cite{SkibaIch}]\label{A-J-bijection}
The index bundle induces a bijection
\begin{equation}\label{indbij}
\ind\colon [\Lambda,\Phi(\ell_0(\mathbb{R}^d))]\rightarrow KO(\Lambda).
\end{equation}
\end{theorem}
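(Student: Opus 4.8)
The plan is to transport the classical Atiyah--J\"anich argument for Hilbert spaces to the Banach space $\ell_0(\mathbb{R}^d)$, the only non-formal ingredient of that argument being the contractibility of the general linear group in the norm topology. For a separable Hilbert space this is Kuiper's theorem; here it is replaced by a theorem of Arlt \cite{Arlt}, which asserts that $GL(c_0)$ is contractible. Since $\ell_0(\mathbb{R}^d)$ is isomorphic to $c_0$ as a Banach space (re-index $\mathbb{Z}$ by $\mathbb{N}$ and use $c_0(\mathbb{R}^d)\cong c_0$), the group $GL(\ell_0(\mathbb{R}^d))$ is contractible. Fixing once and for all a Banach space isomorphism $\ell_0(\mathbb{R}^d)\cong\ell_0(\mathbb{R}^d)\oplus\ell_0(\mathbb{R}^d)$ turns block sum into a base-point preserving, homotopy-associative and homotopy-commutative multiplication on $\Phi(\ell_0(\mathbb{R}^d))$, so that $[\Lambda,\Phi(\ell_0(\mathbb{R}^d))]$ becomes an abelian group and $\ind$ a group homomorphism. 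It therefore suffices to prove that $\ind$ is surjective and has trivial kernel.

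For surjectivity, write an arbitrary class in $KO(\Lambda)$ as $[\mathbb{E}]-[\Theta(\mathbb{R}^n)]$, realise $\mathbb{E}$ as a subbundle of some trivial bundle $\Theta(\mathbb{R}^N)$, and fix a linear embedding $\mathbb{R}^N\hookrightarrow\ell_0(\mathbb{R}^d)$ onto a complemented subspace. Using the continuous family of finite-rank projections $p_\lambda\colon\mathbb{R}^N\to\mathbb{E}_\lambda$ together with a fixed shift-type operator on $\ell_0(\mathbb{R}^d)$ (to absorb the $\mathbb{R}^N$ summand and create a cokernel of the prescribed dimension $n$), one builds a continuous family $L\colon\Lambda\to\Phi(\ell_0(\mathbb{R}^d))$ whose kernel bundle is isomorphic to $\mathbb{E}$ and whose cokernel bundle is $\Theta(\mathbb{R}^n)$; then $\ind(L)=[\mathbb{E}]-[\Theta(\mathbb{R}^n)]$ by the very definition \eqref{defiindbund} of the index bundle. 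This is exactly the finite-rank perturbation construction used in the Hilbert case, and it uses no inner product.

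For injectivity it is enough, by the group structure, to show that $\ind(L)=0$ forces $L$ to be null-homotopic in $\Phi(\ell_0(\mathbb{R}^d))$. Choose $V$ transversal to all images of $L_\lambda$ as in \eqref{subspace}, so that $\ind(L)=[E(L,V)]-[\Theta(V)]$; the hypothesis $\ind(L)=0$ means $E(L,V)\oplus\Theta(\mathbb{R}^m)\cong\Theta(\mathbb{R}^{\dim V+m})$ for some $m$. After stabilising $L$ by the identity of $\mathbb{R}^m$ (which changes neither the homotopy class nor the index bundle once $\mathbb{R}^m$ is re-absorbed into $\ell_0(\mathbb{R}^d)$), the bundle $E(L,V)$ becomes trivial; a trivialising frame is precisely a continuous-in-$\lambda$ choice of a complement of $\im(L_\lambda)$ identified with a fixed $V$, and a standard deformation of Fredholm operators then connects $L$ to a family of isomorphisms $L'\colon\Lambda\to GL(\ell_0(\mathbb{R}^d))$. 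Finally, Arlt's theorem shows $GL(\ell_0(\mathbb{R}^d))$ is contractible, so $L'$, hence $L$, is homotopic to the constant family $I$. Thus $\ker(\ind)$ is trivial and $\ind$ is a bijection.

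The main obstacle is not any single computation but making sure that every lemma borrowed from the Hilbert-space proof of Atiyah--J\"anich --- the $H$-space structure, the fact that index-zero families stabilise to families of isomorphisms, and the local deformation lemmas for Fredholm operators --- remains valid over an arbitrary Banach space. All of these rely only on openness of the sets of surjections with complemented kernel, on partitions of unity on the compact space $\Lambda$, and on the contractibility input; none of them uses orthogonality. The one genuinely non-elementary point, which is also where the argument would break for a general Banach space, is the availability of Arlt's contractibility theorem for $GL(c_0)\cong GL(\ell_0(\mathbb{R}^d))$.
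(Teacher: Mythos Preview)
Your proposal is correct and matches the approach the paper itself attributes to \cite{SkibaIch}: the theorem is not reproved here but merely cited, with the remark that the classical Atiyah--J\"anich argument goes through once Kuiper's theorem is replaced by Arlt's contractibility result for $GL(c_0)\cong GL(\ell_0(\mathbb{R}^d))$, exactly as you outline. The only point worth noting is that, immediately after stating the theorem, the paper gives a \emph{different} proof of the surjectivity half (Theorem~\ref{realization}): rather than your abstract finite-rank perturbation construction, it builds an explicit asymptotically hyperbolic linear vector field $\mathbb{A}$ from prescribed subbundles $\mathbb{E},\mathbb{F}\subset\Theta(\mathbb{R}^d)$ and applies the index formula of Theorem~\ref{ind-bundle-formula} to obtain $\ind(L_{\mathbb{A}})=[\mathbb{E}]-[\mathbb{F}]$. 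That concrete realisation is what the paper actually needs for its bifurcation examples; your abstract surjectivity argument is perfectly valid but does not produce operators of the form $\mathbb{S}_l-\mathcal{N}_{\mathbb{A}}$.
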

\noindent
A new observation is that we can reprove the surjectivity of \eqref{indbij} by using the operators \eqref{linear-equation}. This will be important for the construction of non-trivial examples below.

\begin{theorem}\label{realization}
For any two finite-dimensional subbundles $\mathbb{E}$ and $\mathbb{F}$ of the trivial bundle $\Theta(\R^d)$ over a compact space $\Lambda$, there exists
a linear parametrized vector field $\mathbb{A}\colon \Lambda\times \Z\to GL(d,\R)$ such that $L\colon  \Lambda\to \mathcal{L}(\ell_0(\R^d))$ given by  $L(\lambda)=\mathbb{S}_l- \mathcal{N}_{\mathbb{A}}(\lambda)$ is a continuous family of Fredholm operators with
\begin{equation*}
\ind(L)=[\mathbb{E}]-[\mathbb{F}]\in KO(\Lambda).
\end{equation*}
\end{theorem}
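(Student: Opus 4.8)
The plan is to realize $[\mathbb{E}]-[\mathbb{F}]$ by a parametrized vector field that is autonomous on each of the two half-lines, the two constant ``arms'' being chosen so that their stable subbundles are $\mathbb{E}$ and $\mathbb{F}$ respectively; the index bundle is then read off directly from Corollary \ref{cor:asympFred}.

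First I would produce continuous families $\mathcal{A}_{\pm}\colon\Lambda\to GL(d,\R)\cap H(d,\R)$ with $\IM P^s_{\mathcal{A}_+}=\mathbb{E}$ and $\IM P^s_{\mathcal{A}_-}=\mathbb{F}$. Fix $q\in(0,1)$. If $0<\dim\mathbb{E}<d$, take $\mathcal{A}_+:=\mathbb{H}_{\mathbb{E}}$ as provided by Lemma \ref{construction}: the matrix $\mathbb{H}_{\mathbb{E}}(\lambda)$ is diagonalizable with eigenvalue $q<1$ on $\mathbb{E}_\lambda$ and eigenvalue $1/q>1$ on $\mathbb{E}_\lambda^{\perp}$, so its stable space equals $\mathbb{E}_\lambda$, i.e.\ $\IM P^s_{\mathbb{H}_{\mathbb{E}}}=\mathbb{E}$ (this is precisely the equality $E^s_q(\mathbb{H}_{\mathbb{E}})=\mathbb{E}$ of Lemma \ref{construction}). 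If $\dim\mathbb{E}=d$, then $\mathbb{E}=\Theta(\R^d)$ and one takes $\mathcal{A}_+(\lambda):=q\,I_d$; if $\dim\mathbb{E}=0$, then $\mathbb{E}=\Theta(\{0\})$ and one takes $\mathcal{A}_+(\lambda):=q^{-1}I_d$. In all three cases $\mathcal{A}_+\in GL(d,\R)\cap H(d,\R)$ and $\IM P^s_{\mathcal{A}_+}=\mathbb{E}$; the family $\mathcal{A}_-$ is built from $\mathbb{F}$ in exactly the same way.

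Next I would define $\mathbb{A}\colon\Lambda\times\Z\to GL(d,\R)$ by $\mathbb{A}(\lambda,n):=\mathcal{A}_+(\lambda)$ for $n\geqslant 0$ and $\mathbb{A}(\lambda,n):=\mathcal{A}_-(\lambda)$ for $n<0$. The set $\{\mathbb{A}_n:n\in\Z\}=\{\mathcal{A}_+,\mathcal{A}_-\}$ consists of two maps that are continuous, hence uniformly continuous, on the compact space $\Lambda$, so $\mathbb{A}$ satisfies $(L0)$; and since $\mathbb{A}$ is autonomous on $\Z^+_0$ and on $\Z^-_{-1}$, it is asymptotically hyperbolic with uniform limits $\mathcal{A}_{\pm}\in GL(d,\R)\cap H(d,\R)\subset H(d,\R)$. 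By Example \ref{ED=H} the constant systems $\mathcal{A}_{\pm}$ have an ED on $\Z$, so $\mathbb{A}$ satisfies the hypotheses of Theorem \ref{ind-bundle-formula}; in particular $L(\lambda)=\mathbb{S}_l-\mathcal{N}_{\mathbb{A}}(\lambda)$ is a continuous family of Fredholm operators. Corollary \ref{cor:asympFred} then yields
\begin{equation*}
\ind(L)=[\IM P^s_{\mathcal{A}_+}]-[\IM P^s_{\mathcal{A}_-}]=[\mathbb{E}]-[\mathbb{F}]\in KO(\Lambda),
\end{equation*}
as desired.

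The calculation is short precisely because the work has been front-loaded into Lemma \ref{construction} and Corollary \ref{cor:asympFred}; the one point that genuinely requires attention is that Lemma \ref{construction} covers only proper nonzero subbundles, so the extreme ranks $\dim\mathbb{E},\dim\mathbb{F}\in\{0,d\}$ have to be handled separately by the scalar families $q\,I_d$ and $q^{-1}I_d$. (Alternatively, at the cost of enlarging the ambient dimension, one could replace $\mathbb{E},\mathbb{F}$ by $\mathbb{E}\oplus\Theta(\R)$, $\mathbb{F}\oplus\Theta(\R)\subset\Theta(\R^{d+2})$, which are proper nonzero subbundles and leave $[\mathbb{E}]-[\mathbb{F}]$ unchanged.)
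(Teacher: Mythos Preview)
Your proof is correct and follows essentially the same route as the paper: build hyperbolic families via Lemma \ref{construction} whose stable subbundles are $\mathbb{E}$ and $\mathbb{F}$, splice them along the two half-lines, and read off the index bundle. The only differences are cosmetic: the paper inserts an arbitrary finite block $T(\lambda,n)$ on $[\kappa_-,\kappa_+]$ and verifies the ED estimates by hand before invoking Theorem \ref{ind-bundle-formula}, whereas you take the two-piece system, note it is asymptotically hyperbolic, and quote Corollary \ref{cor:asympFred} instead---a legitimate shortcut since that corollary was itself derived from Theorem \ref{ind-bundle-formula}. Your explicit treatment of the extreme ranks $\dim\mathbb{E},\dim\mathbb{F}\in\{0,d\}$ is a nice touch that the paper leaves implicit when invoking Lemma \ref{construction}.
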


\begin{proof}
At the beginning of the proof, let us observe that Lemma \ref{construction} implies the existence of two functions
$\mathbb{A}_{\mathbb{E}}\colon \Lambda\to GL(d,\R)$ and $\mathbb{A}_{\mathbb{F}}\colon \Lambda\to GL(d,\R)$ such that
\begin{align*}
E^s_q(\mathbb{A}_{\mathbb{E}})=\mathbb{E},\;E^u_q(\mathbb{A}_{\mathbb{E}})=\mathbb{E}^{\perp} \text{ and }
E^s_q(\mathbb{A}_{\mathbb{F}})=\mathbb{F},\;E^u_q(\mathbb{A}_{\mathbb{F}})=\mathbb{F}^{\perp},
\end{align*}
where $q\in (0,1)$ is fixed. Moreover, arguing as in the proof of Lemma \ref{construction}, we find for $\kappa_+>0$ and $\kappa_-<0$ two continuous projectors
\begin{align*}
\mathbb{P}^+\colon \Lambda\times \Z_{\kappa_+}^+\to \mathcal{L}(\R^d) \text{ and }
\mathbb{P}^-\colon \Lambda\times \Z_{\kappa_-}^-\to \mathcal{L}(\R^d)
\end{align*}
by
\begin{align*}
\mathbb{P}^+(\lambda,n)v=v_{\lambda}\in \mathbb{E}_{\lambda} \text{ and } \mathbb{P}^-(\lambda,n)w=w_{\lambda}\in \mathbb{F}_{\lambda},
\end{align*}
where $v=v_{\lambda}+v_{\lambda}^{\perp}\in \mathbb{E}_{\lambda}\oplus \mathbb{E}_{\lambda}^{\perp}$ and $w=w_{\lambda}+w_{\lambda}^{\perp}\in \mathbb{F}_{\lambda}\oplus \mathbb{F}_{\lambda}^{\perp}$. Note that $(I_d-\mathbb{P}^+(\lambda,n))v=v_{\lambda}^{\perp}$ and $(I_d-\mathbb{P}^-(\lambda,n))w=w_{\lambda}^{\perp}$.
Let now $\mathbb{A}\colon \Lambda\times\Z\to GL(\R^d)$  be defined by
\begin{align*}
\mathbb{A}(\lambda,n):= \mathbbm{1}_{(-\infty,\kappa_-)}(n) \mathbb{H}_{\mathbb{F}}(\lambda)+
\mathbbm{1}_{[\kappa_-,\kappa_+]}(n)T(\lambda,n)+
\mathbbm{1}_{(\kappa_+,\infty)}(n)\mathbb{H}_{\mathbb{E}}(\lambda),
\end{align*}
where $\mathbb{H}_{\mathbb{E}}$ and $\mathbb{H}_{\mathbb{F}}$ are given as in \eqref{L-E} and $T\colon \Lambda\times (\Z\cap [\kappa_-,\kappa_+])\to GL(\R^d)$ is a finite family $\{T_n\colon \Lambda\to GL(\R^d)\mid \kappa_-\leqslant n\leqslant \kappa_+\}$ of continuous functions.
We will show that $\mathbb{A}$ with the two projectors $\mathbb{P}^-$ and $\mathbb{P}^+$  admits an ED both $\Z_{\kappa-}^-$ and $\Z_{\kappa_+}^+$.
For $\lambda\in\Lambda$, let $k\geqslant n$ with $k,n\in \Z_{\kappa_+}^+$. Then
\begin{align*}
&|\Phi_{\lambda}(k,n)\mathbb{P}^+(\lambda,n)v|=|\mathbb{A}_{\mathbb{E}}(\lambda)^{k-n}v_{\lambda}|=|q^{k-n}v_{\lambda}|=q^{k-n}|\mathbb{P}^+(\lambda,n)v|,\\
&|\Phi_{\lambda}(k,n)(I_d-\mathbb{P}^+(\lambda,n))v|=|\mathbb{A}_{\mathbb{E}}(\lambda)^{k-n}v_{\lambda}^{\perp}|=|(1/q)^{k-n}v_{\lambda}^{\perp}|=
(1/q)^{k-n}|(I_d-\mathbb{P}^+(\lambda,n))v|,
\end{align*}
which proves that $\mathbb{A}$ admits an ED on $\Z^+_{\kappa_+}$ with respect to $\mathbb{P}^+$. The same arguments can be repeated in order to show that
$\mathbb{A}$ admits an ED on $\Z^-_{\kappa_-}$ with respect to $\mathbb{P}^-$.
Consequently, Theorem \ref{ind-bundle-formula} implies
that $L\colon  \Lambda\to \mathcal{L}(\ell_{0}(\R^d))$ is a continuous family of Fredholm maps and
\begin{equation*}
\ind(L)=[\IM \mathbb{P}^{+}(\kappa_+)]-[\IM \mathbb{P}^{-}(\kappa_-)]=[\mathbb{E}]-[\mathbb{F}]\in KO(\Lambda).
\end{equation*}
This completes the proof.
\end{proof}
Let us now give the announced class of examples. Assume that $E$ and $F$ are subbundles of the trivial bundle $\Theta(\mathbb{R}^d)$ over the compact space $\Lambda$ having different total Stiefel-Whitney classes, i.e., $w(E)\neq w(F)\in H^\ast(\Lambda;\mathbb{Z}_2)$. We consider as in the proof of Theorem \ref{realization} the asymptotically hyperbolic family $\mathbb{A}\colon \Lambda\times \Z\to \mathcal{L}(\R^d)$ given by

\[\mathbb{A}(\lambda,n):=
\mathbbm{1}_{(-\infty,\kappa_-)}(n)\mathbb{H}_{F}(\lambda)+\mathbbm{1}_{[\kappa_-,\kappa_+]}(n)T(\lambda,n)+ \mathbbm{1}_{(\kappa_+,\infty)}(n)\mathbb{H}_{E}(\lambda),\]
where $\kappa_-<0<\kappa_+$ and $\{T_n\colon \Lambda\to GL(\R^d)\mid \kappa_-\leqslant n\leqslant \kappa_+\}$ is a finite family of continuous functions. We now take $\gamma_\pm$ as in our Theorem \ref{perturbed-ED} and assume that $\mathbb{D}\colon \Lambda\times \Z\to \mathcal{L}(\R^d)$ is $\gamma_\pm$ small for some $\widetilde{\kappa}_\pm\in\mathbb{Z}^\pm$.


Further, let $\mathbb{R}\colon \Lambda\times\Z\times \R^d\to \R^d$ satisfy $(F0)$, $(F1)$ and
\begin{itemize}
\item $\mathbb{R}(\lambda,n,0)=0$ for all $\lambda\in \Lambda$ and $n\in\Z$,
\item $(D_2 \mathbb{R}_n)(\lambda,0)\rightarrow 0$ as $n\rightarrow\pm\infty$ uniformly in $\lambda\in \Lambda$.
\end{itemize}
We now consider the discrete dynamical systems
\begin{align}\label{system2}
\phi(n+1)=(\mathbb{A}+\mathbb{D})(\lambda,n)\phi(n)+\mathbb{R}(\lambda,n,\phi(n)),\quad n\in\mathbb{Z}.
\end{align}
The linearization of \eqref{system2} at $0$ is $\mathbb{A}+\mathbb{D}$ and it follows from  Corollary \ref{ind-perturbation} and Theorem \ref{realization} that $L_{\mathbb{A}+\mathbb{D}}\colon \Lambda\to \mathcal{L}(\ell_0(\R^d))$ is a continuous family of Fredholm operators such that

\[\ind(L_{\mathbb{A}+\mathbb{D}})=\ind(L_{\mathbb{A}})=[E]-[F]\in \widetilde{KO}(\Lambda).\]
As $w(E)\neq w(F)$, we obtain from Theorem \ref{thm:nonlin} that if there exists $\lambda_0\in\Lambda$ such that $1\not\in \Sigma((\mathbb{A}+\mathbb{D})(\lambda_0))$ $($comp. Lemma \ref{iso}$)$, then there is a bifurcation point of \eqref{system2}. Moreover, if $\Lambda$ is a manifold of dimension $m$ and $w_i(E)\neq w_i(F)$ for some $1\leq i\leq m-1$, then we get from Theorem \ref{thm:nonlinII} that the covering dimension of the set $\mathcal{B}$ of all bifurcation points of \eqref{system2} is at least $m-i$ and $\mathcal{B}$ is not contractible as a topological space.  Finally, let us emphasize that we could not have got these results from the bifurcation theory developed in \cite{SkibaIch} as the family $\mathbb{A}+\mathbb{D}$ is in general not asymptotically hyperbolic.\\
Let us recall that $w_1(E)= 0\in H^1(\Lambda;\mathbb{Z}_2)$ if and only if $E$ is orientable. For example, if $\Lambda=S^1$, $E\subset\Theta(\mathbb{R}^2)$ is the M\"obius bundle and $F=S^1\times\mathbb{R}\subset\Theta(\mathbb{R}^2)$, then \eqref{system2} has a bifurcation point. An often used setting in nonlinear analysis is that $\Lambda=G_n(\mathbb{R}^{2n})$ is the Grassmannian of all $n$-dimensional subspaces of $\mathbb{R}^{2n}$ and $\gamma_n(\mathbb{R}^{2n})$ is the tautological bundle (cf. e.g., \cite{FiPejsachowiczII}, \cite{bartsch:91}, \cite{NilsBif}). As $\gamma_n(\mathbb{R}^{2n})$ is not orientable, we obtain from Theorem \ref{thm:nonlinII} that the set of bifurcation points of \eqref{system2} is non-contractible and at least of dimension $n^2-1$. Similarly,  if $\Lambda$ is a smooth non-orientable $m$-manifold, $m\geq 2$, $E=T\Lambda\subset\Theta(\mathbb{R}^d)$ and $F=\Theta(\mathbb{R}^m)$, then the set of bifurcation points of \eqref{system2} is non-contractible and at least of dimension $m-1$.\\
Finally, let us point out that there are various results ensuring the non-triviality of Stiefel-Whitney classes $w_k(T\Lambda)$, $k\geq 2$, for orientable manifolds $\Lambda$ of dimension $m$. For example, if $\Lambda$ has an odd Euler characteristic, then $w_m(T\Lambda)\neq 0$ (c.f. \cite[Cor. 11.12]{MiSta}) and we again obtain the existence of a bifurcation point of \eqref{system2} for $E=T\Lambda$ and $F=\Theta(\mathbb{R}^m)$. Note that Theorem \ref{thm:nonlinII} cannot be applied in this case as we would need that $w_k(T\Lambda)\neq 0$ for some $1\leq k\leq m-1$, however the recent paper \cite{Renee} indicates that there might very well be non-trivial lower Stiefel-Whitney classes in this case. Hence Theorem \ref{thm:nonlinII} can yield better results than just the existence of a single bifurcation point. For example, if $\Lambda$ has a CW-structure of the type

\begin{equation}\label{CW-K}
K=e^0\cup e^{r}\cup e^{2r}\cup\cdots\cup e^{nr},
\end{equation}
for $r=1,2$ or $4$ and some $n\geq 1$, then $H^{\ast}(\Lambda;\Z_2)\approx \Z_2[\alpha]/(\alpha^{n+1})$, where $\alpha$ is the non-zero class in $H^r(\Lambda;\mathbb{Z}_2)$, and the total Stiefel-Whitney class is given by $w(T\Lambda)=(1+\alpha)^{n+1}$. This applies in particular to real, complex and quaternionic projective spaces. Note that apart from even dimensional real projective spaces, all these manifolds are orientable and hence have vanishing first Stiefel-Whitney classes.

\subsection*{Acknowledgements}
The first author was partially supported by the Polish National Science Center (NCN), research grant no. DEC-
2017/01/X/ST1/00889. The project was co-financed by the Polish National Agency for Academic Exchange (NAWA) and
the German Academic Exchange Service (DAAD).

\thebibliography{9999999}

\bibitem[AM06]{Ab-Ma2} Abbondandolo A., Majer P.,
\textbf{On the global stable manifold}, Studia Math. \textbf{177}, 2006, 113--131

\bibitem[Ar66]{Arlt} D. Arlt, \textbf{Zusammenziehbarkeit der allgemeinen linearen Gruppe des Raumes $c_0$ der Nullfolgen}, Invent. Math. \textbf{1}, 1966, 36--44

\bibitem[At61]{ThomAtiyah} M.F. Atiyah, \textbf{Thom complexes}, Proc. Lond. Math. Soc. \textbf{11}, 1961, 291--310.

\bibitem[At89]{KTheoryAtiyah} M.F. Atiyah, \textbf{K-Theory}, Addison-Wesley, 1989

\bibitem[AMZ94]{Aulbach0} B. Aulbach, N. Van Minh, P. P. Zabreiko, \textbf{The concept of spectral dichotomy for linear
difference equations}, J. Math. Anal. Appl. 185, 1994, 275--287

\bibitem[AM96]{Aulbach1} B. Aulbach, N. Van Minh, \textbf{The concept of spectral dichotomy for linear difference
equations II}, J. Difference Equ. Appl.~\textbf{2}, 1996, 251--262

\bibitem[BV11]{Bar} L. Barreira, C. Valls, \textbf{Smooth robustness of exponential dichotomies}, Proc. Am. Math.
Soc. 139 (2011), no. 3, 999--1012.

\bibitem[Bar91]{bartsch:91} T.~Bartsch, \textbf{The global structure of the zero set of a family of semilinear Fredholm maps},
Nonlinear Analysis: Theory, Methods and Applications~\textbf{17}, 313-331, 1991

\bibitem[Bas00]{baskakov} A.G.~Baskakov, \textbf{On the invertibility and the Fredholm property of difference operators}, Math. Notes~\textbf{67(5--6)}, 2000, 690--698






\bibitem[Di08]{Dieck} T. tom Dieck, \textbf{Algebraic topology}, EMS Textbooks in Mathematics, Z\"{u}rich, 2008



\bibitem[FP91]{FiPejsachowiczII} P.M. Fitzpatrick, J. Pejsachowicz, \textbf{Nonorientability of the Index Bundle and Several-Parameter Bifurcation}, J. Funct. Anal. \textbf{98}, 1991, 42--58

\bibitem[GGK90]{Gohberg} I. Gohberg, S. Goldberg, M. A. Kaashoek, \textbf{Classes of Linear Operators Vol. I}, Operator Theory: Advances and Applications Vol. \textbf{49}, Birkh\"{a}user, 1990





\bibitem[Ha09]{Hatcher1} A. Hatcher, {\bf Vector Bundles \& K-Theory}, preprint


\bibitem[Ho17]{Renee} R. S. Hoekzema, \textbf{Manifolds with odd Euler characteristic and higher orientability}, arXiv:1704.06607




\bibitem[Kat80]{Kato} T. Kato, \textbf{Perturbation theory for linear operators}, corrected 2nd ed., Grundlehren der mathematischen Wissenschaften 132, Springer, Berlin etc., 1980

\bibitem[La95]{Lang} S. Lang, \textbf{Differential and Riemannian manifolds}, Third edition, Graduate Texts in Mathematics \textbf{160}, Springer-Verlag, New York,  1995




\bibitem[MS74]{MiSta} J.W. Milnor, J.D. Stasheff, \textbf{Characteristic Classes}, Princeton University Press, 1974

\bibitem[Pa84]{Pa84} K.J.~Palmer, \textbf{Exponential dichotomies and transversal homoclinic points}, Journal of Differential Equations \textbf{55},
1984, 225--256

\bibitem[Pa88]{Pal88} K. J. Palmer, \textbf{Exponential dichotomies, the shadowing lemma and transversal homoclinic points},
Dynamics Reported \textbf{1}, 1988, 265--306



\bibitem[Pe08a]{JacoboAMS} J. Pejsachowicz, \textbf{Bifurcation of homoclinics}, Proc. Amer. Math. Soc., \textbf{136}, no. 1,  2008, 111--118



\bibitem[Pe11b]{JacoboTMNAII} J. Pejsachowicz, \textbf{Bifurcation of Fredholm maps II. The dimension of the set of
 bifurcation points}, Topol. Methods Nonlinear Anal. \textbf{38},  2011, 291--305

\bibitem[PS12]{JacoboRobertI} J. Pejsachowicz, R. Skiba,  \textbf{Global bifurcation of homoclinic trajectories of discrete dynamical systems}, Central European Journal of Mathematics, \textbf{10(6)}, 2012, 2088--2109

\bibitem[PS13]{JacoboRobertII} J. Pejsachowicz, R. Skiba, \textbf{Topology and homoclinic trajectories of discrete dynamical systems}, Discrete and Continuous Dynamical Systems, Series S, \textbf{6(4)}, 2013, 1077--1094



\bibitem[Pi16]{Pituk} M. Pituk, \textbf{The local spectral radius of a nonnegative orbit of compact linear operators}, Mathematica Slovaca, \textbf{66(3)}, 2016, pp. 707--714

\bibitem[Po09]{Poetzschec}  C. Pötzsche, \textbf{A Note on the dichotomy spectrum}, J. Difference Equ. Appl. 15, No. 10, 2009, 1021--1025

\bibitem[Po10]{Christian10}  C. Pötzsche, \textbf{Nonautonomous bifurcation of bounded solutions I: A Lyapunov-Schmidt approach}, Discrete Contin. Dyn. Syst., Ser. B \textbf{14}, No. 2, 2010, 739--776.

\bibitem[Po10a]{Christian10a} { C. Pötzsche, \textbf{Geometric theory of discrete nonautonomous dynamical systems}, Lect. Notes
Math., vol. 2002, Springer, Berlin, 2010}

\bibitem[Po11a]{Poetzsche} C. Pötzsche, \textbf{Nonautonomous continuation of bounded solutions}, Commun. Pure Appl. Anal. \textbf{10}, 2011, 937--961

\bibitem[Po11b]{Poetzscheb} C. Pötzsche, \textbf{Bifurcations in Nonautonomous Dynamical Systems: Results and tools in discrete time},
Proceedings of the International Workshop Future Directions in Difference Equations, 2011, 163--212

\bibitem[Po15]{Poetzsched} C. Pötzsche, \textbf{Smooth Roughness of Exponential Dichotomies, Revisited}, Discrete Contin. Dyn. Syst., Ser. \textbf{B 20}, No. 3, 2015, 853--859

\bibitem[Ru16]{Russ}{ E. Russ, \textbf{Dichotomy spectrum for difference equations in Banach spaces}, Journal of Difference Equations and Applications \textbf{23}, No. 3, 2016, 574--617}



\bibitem[SW17]{SkibaIch}  R. Skiba, N. Waterstraat, \textbf{The Index Bundle and Multiparameter Bifurcation for Discrete Dynamical Systems},  Discrete Contin. Dyn. Syst., Ser. \textbf{A 37}, No. 11, 2017, 5603--5629





\bibitem[Wa11]{indbundleIch} N. Waterstraat, \textbf{The index bundle for Fredholm morphisms}, Rend. Sem. Mat. Univ. Politec. Torino \textbf{69}, 2011, 299--315

\bibitem[Wa18]{NilsBif} N. Waterstraat, \textbf{A Remark on Bifurcation of Fredholm Maps}, Adv. Nonlinear Anal. \textbf{7}, 2018, 285--292, arXiv:1602.02320 [math.FA]

\vspace{0.5cm}
Robert Skiba\\
Faculty of Mathematics and Computer Science\\
Nicolaus Copernicus University in Toru\'n\\
Poland\\
E-mail: robert.skiba@mat.umk.pl

\vspace{0.5cm}

Nils Waterstraat\\
Martin-Luther-Universit\"at Halle-Wittenberg\\
Naturwissenschaftliche Fakult\"at II\\
Institut f\"ur Mathematik\\
06099 Halle (Saale)\\
Germany\\
E-mail: nils.waterstraat@mathematik.uni-halle.de

\end{document}